\numberwithin{equation}{section}
\newtheorem{theorem}{Theorem}[section]
\newtheorem{proposition}{Proposition}[section]
\newtheorem{lemma}{Lemma}[section]
\newtheorem{corollary}{Corollary}[section]
\newtheorem{definition}{Definition}[section]
\theoremstyle{definition}
\newtheorem{remark}{Remark}
\def\N{\mathbb{N}}
\def\XXint#1#2#3{{\setbox0=\hbox{$#1{#2#3}{\int}$}
     \vcenter{\hbox{$#2#3$}}\kern-.5\wd0}}
\def\e{\varepsilon}
\def\R{{\mathbb R}}
\def\RN{\mathbb{R}^2}
\def\ulm{u_{\lambda,\mu}}
\def\lm0{\lambda,\mu>0}
\def\mlt{\lambda,\mu\to\infty,\ \frac{\lambda}{\mu}\to0}
\def\pl{\phi_{\lambda,\mu}}
\def\nl{N_{\lambda,\mu}}
\def\wlm{w_{\lambda,\mu}}
\begin{document}

\title[Periodic   Maxwell-Chern-Simons  vortices with concentrating property]{Periodic   Maxwell-Chern-Simons  vortices with concentrating property}

\author{Weiwei Ao}
\address[Weiwei Ao]
{Wuhan University,
Department of Mathematics and Statistics, Wuhan, 430072, PR China}
\email{wwao@whu.edu.cn}

\author{Ohsang Kwon}
\address[Ohsang Kwon]
{Department of Mathematics, Chungbuk National University, Chungdae-ro 1, Seowon-gu, Cheongju, Chungbuk 362-763, Korea}
\email{ohsangkwon@chungbuk.ac.kr}

\author{Youngae Lee}
\address[Youngae Lee]
{Department of Mathematics Education, Teachers College, Kyungpook National University, Daegu, South Korea}
\email{youngaelee@knu.ac.kr}

\begin{abstract}
In order to study electrically and magnetically charged vortices in  fractional quantum Hall effect and  anyonic superconductivity, the Maxwell-Chern-Simons (MCS) model was introduced by [Lee, Lee, Min (1990)] as a unified system of the classical Abelian-Higgs model (AH) and the Chern-Simons (CS) model.  In this article, the first  goal is to obtain  the uniform (CS) limit result of (MCS) model  with respect to  the Chern-Simons parameter without any restriction on either   a particular class of solutions or  the number of vortex points.   The most important step for this purpose is to derive the relation between  the Higgs field and the neutral scalar field. Our (CS) limit result also provides  the critical clue to answer the open problems raised by  [Ricciardi,Tarantello (2000)] and [Tarantello (2004)], and we succeed to establish the existence of periodic Maxwell-Chern-Simons vortices satisfying the concentrating
property of the density of superconductive electron pairs. Furthermore, we  expect that the (CS) limit analysis  in this paper would  help to study the stability, multiplicity, and bubbling phenomena for solutions of  the (MCS) model.
\end{abstract}

\date{\today}
\keywords{Maxwell-Chern-Simons; blow up analysis; asymptotic behavior;  35B40; 35J20}

\maketitle

\section{Introduction}\label{sec1}

As the pioneering work by Ginzburg and Landau, the classical Abelian-Higgs  (AH) model (or, Maxwell-Higgs)
was proposed in order to describe the superconductivity phenomena  at low temperature  (see \cite{BBH2,JT,La,PR}).  This model has been studied   in \cite{BGP,JT,Ta,WY} for various domains. However, (AH) model  can only describes electrically neutral  vortices, which  are static solutions of the corresponding Euler-Lagrange equation.    In order to  study  the  fractional quantum Hall effect and  high temperature superconductivity, we should investigate electrically and magnetically "charged" vortices. For this purpose,  one might attempt  to include Chern-Simons (CS) term into (AH) model. However, just adding (CS) term into (AH) model loses the  self-dual structure, which is characterized by a special class of static solution corresponding to a constrained energy minimizer. The self-dual equation has a benefit  in the gauge field theory since it is  a reduced first-order equation, so called "Bogomol'nyi  equation", for the more complicated second order equation of motion (see \cite{B, NO}). In order to obtain a  self-dual Chern-Simons theory,  Hong-Kim-Pac in  \cite{HKP} and Jackiw-Weinberg in \cite{JW} independently proposed a model for  charged vortices with electrodynamics governed only  by the (CS) term without Maxwell term, which was included in the (AH) model. This pure (CS) model was suggested from  the observation such that  the (CS) term is dominant over the Maxwell term in the large scale. During the last few decades,  the   (CS) model has been extensively studied in \cite{CI, CFL, C2, SY, SY1, W, Y} for entire solutions on a full space,  in \cite{CaY, C2, C3, DJLW, DJLW2, LJLPW, H, H2, H3, NT, ST, T} for the periodic case, and in \cite{HJ} for bounded domains (see also  \cite{ChC, CC, C1, H4, Kim, KK, Ku, S}).

As stated above,  a naive inclusion of both (AH) term and (CS) term in the Lagrangian   fails to make the system self-dual. However,   in \cite{LLM}, Lee, Lee, and  Min  succeeded in   restoring the self-duality in  Maxwell-Chern-Simons (MCS) model as a  unified self-dual system of (AH) and (CS), by  introducing a neutral scalar field.  Moreover, the authors in \cite{LLM} showed formally that  the self-dual equation of (MCS) owns both (AH) model and (CS) model as limiting problems according to the limit behavior of the electric charge and the Chern-Simons mass scale (see also \cite{Du}). This formal argument in \cite{LLM,Du}  could be supported with mathematically rigorous proof in \cite{ChK, ChK2,  R, RT}. In \cite{ChK}, Chae and Kim established the existence   of topological multivortex solution for (MCS) model in a full space $\mathbb{R}^2$.   Here, the topological entire solution in $\RN$ satisfies the specific boundary condition such that its first component vanishes at infinity. Moreover, the authors in \cite{ChK} showed the  convergence  of  topological multivortex solutions to the (CS) model and (AH) model. The convergence  depends on the asymptotic behavior of  the electric charge and the Chern-Simons mass scale. In \cite{ChK2}, they also obtained the corresponding result  for topological solutions on a flat two torus (see    \eqref{def_sol} for  the definition of topological solution on a flat two torus).    In \cite{RT}, Ricciardi and Tarantello showed that  there exist at least two gauge distinct periodic multivortices (topological solution and mountain pass solution), and analyzed their asymptotic behavior in terms of the (CS) limit and the (AH) limit.   Moreover, Ricciardi in \cite{R} obtained the  stronger convergence
result for an arbitrary sequence of  periodic multivortices while the Chern-Simons parameter, which is the ratio between  electric charge and the Chern-Simons mass scale, is fixed.

 In this article, one of main  goals is to improve the (CS) limit analysis for  (MCS) model without any restriction on either a particular class of solutions, the number of vortex points,  or the Chern-Simons parameter. Moreover, in view of our first result, we  could also
obtain the affirmative   answers for the open problems raised by Ricciardi and Tarantello in \cite{RT}, and Tarantello in \cite{T2}.

In order to introduce our results more precisely, let us recall   the  Lagrangian density $\mathcal{L}^{MCS}$ for the  (MCS)  model, which is defined in the $(2+1)$-dimensional Minkowski space $\mathbb{R}^{2,1}$ with the metric $\textrm{diag}(1,-1,-1)$:
\begin{equation}\begin{aligned}\label{lagrangian}
\mathcal{L}^{MCS}(A,\phi,\mathfrak{n})&=-\frac{1}{4q^2}F_{\alpha\beta}F^{\alpha\beta}-\frac{\mu}{4q^2}\epsilon^{\alpha\beta\gamma}A_{\alpha}F_{\beta\gamma}+D_{\alpha}\phi\overline{(D^{\alpha}\phi)}+\frac{1}{2q^2}\partial_{\alpha}\mathfrak{n}\partial ^{\alpha}\mathfrak{n}
\\&\quad-|\phi|^2\left(\mathfrak{n}-\frac{q^2}{\mu}\right)^2-\frac{q^2}{2}\left(|\phi|^2-\frac{\mu}{q^2}\mathfrak{n}\right)^2,
\end{aligned}\end{equation}
 where the metric is used to raise or lower indices, all the Greek indices run over $0,1,2$, and $\epsilon^{\alpha\beta\gamma}$ is the totally skew-symmetric tensor fixed so that $\epsilon^{012}=1$.    Here,  $\phi:\mathbb{R}^{1+2}\to\mathbb{C}$ is the complex valued Higgs field, $\mathfrak{n}:\mathbb{R}^{1+2}\to\mathbb{R}$ is the neutral scalar field,    $A_{\alpha}: \mathbb{R}^{1+2}\to\mathbb{R}$ is the gauge field,  $D_{\alpha}=\partial_{\alpha}-iA_{\alpha}$ is the gauge covariant derivative   associated with
$A_{\alpha}$
where $i=\sqrt{-1}$,  and   $F_{\alpha\beta}=\partial_{\alpha}A_{\beta}-\partial_{\beta}A_{\alpha}$ is the field strength.     The constant $q>0$ denotes the electric charge and $\mu>0$ is the Chern-Simons mass scale.
The gauge potential field $\mathcal{A}$ with a 1-form (connection) is identified  as
$\mathcal{A}=-i A_\alpha d x^{\alpha}$, and the Maxwell gauge field  $F_{\mathcal{A}}$ is expressed by $ F_{\mathcal{A}}= \mathrm{d} \mathcal{A}=-\frac{i}{2} F_{\alpha \beta} \mathrm{d} x^{\alpha} \wedge \mathrm{d} x^{\beta}$ is expressed by   the 2-form (curvature) .
Let us denote the self-dual potential by \begin{equation*}V(|\phi|,\mathfrak{n})=|\phi|^2\left(\mathfrak{n}-\frac{q^2}{\mu}\right)^2+
\frac{q^2}{2}\left(|\phi|^2-\frac{\mu}{q^2}\mathfrak{n}\right)^2.\end{equation*}
Note that in $\mathcal{L}^{MCS}$,   the Maxwell term for $\mathcal{A}$ is denoted by $F_{\alpha\beta}F^{\alpha\beta}$ and   the Chern-Simons term is represented by the quantity $\frac{\mu}{4q^2}\epsilon^{\alpha\beta\gamma}A_{\alpha}F_{\beta\gamma}$.
Indeed, the Lagrangian of the (AH) model and  the  (CS) model are given by \[\mathcal{L}^{{AH}}(A, \phi)=-\frac{1}{4 q^{2}} F_{\alpha \beta} F^{\alpha \beta}+D_{\alpha} \phi\overline{\left(D^{\alpha} \phi\right)}-\frac{q^{2}}{2}\left(|\phi|^{2}-1\right)^{2},\] and
  \[\mathcal{L}^{C S}(A, \phi)=-\frac{\mu}{4q^2} \varepsilon^{\alpha \beta \gamma} A_{\alpha} F_{\beta\gamma}+D_{\alpha} \phi\overline{\left(D^{\alpha} \phi\right)}-\frac{q^4}{\mu^2}|\phi|^{2}\left(|\phi|^{2}-1\right)^{2},\]
respectively.
If we  fix $q$, and  assume the identity $\mathfrak{n}=\frac{q^{2}} {\mu}$ in \eqref{lagrangian}, then as  $\mu\to0$, a limiting Lagrangian for  $\mathcal{L}^{MCS}$ formally would be  $\mathcal{L}^{ {AH}}$. On the other hand, if we fix $\frac{ q^{2}}{\mu}$, and  insert the identity $\mathfrak{n}=\frac{q^{2}} {\mu}|\phi|^{2}$ into the potential of  $\mathcal{L}^{MCS}$, then as  $\mu\to\infty$, a limiting Lagrangian for  $\mathcal{L}^{MCS}$ formally would be $\mathcal{L}^{ {CS}}$.

The periodic patterns of vortex configurations have been predicted and founded in the experiment for the study of superconductivity (see \cite{Ab}). Periodic vortices (or condensates) relative to \eqref{lagrangian} are defined as the static solutions, which is independent of the $x^0$-variable, for the following Euler-Lagrangian equations subject to  the 't Hooft type periodic boundary conditions  (see \cite{'tH}) :
 \begin{equation}\label{ELeq}
\left\{\begin{array}{l}
D_{\alpha}D^{\alpha}\phi=-\frac{\partial V}{\partial \bar{\phi}},\\
\frac{1}{q^2}\partial_{\alpha}\partial^{\alpha}\mathfrak{n}=-\frac{\partial V}{\partial \mathfrak{n}},\\ \frac{1}{q^2}\partial_{\beta}F^{\alpha\beta}+\frac{\mu}{2q^2}\epsilon^{\alpha\beta\gamma}F_{\beta\gamma}=J^{\alpha},
\end{array}
\right.
\end{equation}
where $J^{\alpha}=i(\bar{\phi}D^{\alpha}\phi-\phi\overline{(D^{\alpha}\phi)})$ is the conserved current for the system.
We say that $\left(\phi,  A_{\alpha}, \mathfrak{n}_1\right)$ is gauge equivalent to $\left(\psi, B_{\alpha}, \mathfrak{n}_2\right),$ if there exists a smooth function $\omega$ satisfying
\[\left(\psi,  B_{\alpha}, \mathfrak{n}_2\right)=\left(e^{i\omega}\phi,  A_{\alpha}+\partial_{\alpha}\omega, \mathfrak{n}_1\right), \ \alpha=0,1,2.\] The 't Hooft type periodic boundary conditions are required for  the invariance of \eqref{ELeq} with respect to the gauge transformation.
More precisely,      the periodic cell domain is given by
\[\Omega=\left\{x \in \mathbb{R}^{2}\  |\   x=t_{1} \mathbf{a}_{1}+t_{2} \mathbf{a}_{2},\  t_{1}, t_{2}\in (0,1)\right\},\] where   $\mathbf{a}_{1}$ and $\mathbf{a}_{2}$  are   linearly independent vectors    in $\mathbb{R}^{2}.$ Let
$\Gamma_{k}=\left\{x \in \mathbb{R}^{2} \ | \  x=t_{k} \mathbf{a}_{k}, \  t_{k}\in(0,1)\right\}, k=1,2,$  be a part of the boundary of $\Omega$.
 We assume that  $(A, \phi,\mathfrak{n})$ is a static (that is, independent of the $x^0$-variable)  solution of \eqref{ELeq}, and there exist smooth functions $\omega_{k}$, ($k=1,2$)  in a
neighborhood of $\Gamma_{1} \cup \Gamma_{2} \backslash \Gamma_{k}$,  satisfying
\begin{equation}\label{1.2}
\left\{\begin{array}{ll}{A_{j}\left(x+\mathbf{a}_{k}\right)=A_{j}(x)+\partial_{j} \omega_{k}(x),} \quad {j, k=1,2,} \\ {A_{0}\left(x+\mathbf{a}_{k}\right)=A_{0}(x)}, \\ {\phi\left(x+\mathbf{a}_{k}\right)=e^{-i \omega_{k}(x)} \phi(x)}, \\ {\mathfrak{n}\left(x+\mathbf{a}_{k}\right)=\mathfrak{n}(x), } \quad  k=1,2, \end{array}\right.
\end{equation}
for  $x \in  \Gamma_{1} \cup \Gamma_{2} \backslash \Gamma_{k}$, $k=1,2$.  We set
$\omega_{k}\left(s^{1}, s^{2}\right)=\omega_{k}\left(s^{1} \mathbf{a}_{1}, s^{2} \mathbf{a}_{2}\right),   k=1,2$ so that   $\phi$ is  single-valued in $\Omega.$
In view of the compatibility condition,  we have
\begin{equation}\label{1.3}\omega_{1}\left(0,0^{+}\right)-\omega_{1}\left(0,1^{-}\right)+\omega_{2}\left(1^{-}, 0\right)-\omega_{2}\left(0^{+}, 0\right)=2 \pi \mathfrak{M},\end{equation}
where  $\mathfrak{M}\in \mathbb{Z}_+$ is called the vortex number and coincides with the
total number of zeroes of $\phi$ in $\Omega$ counted according to their multiplicities.

 Since the Euler-Lagrangian equation \eqref{ELeq} is very complicated to study
even for stationary solution, we restrict to consider energy minimizers only.   It is well known from the arguments in \cite{B} that   a global minimizer of  static energy  on
suitable function spaces  is achieved by the following self-dual
equations:
 \begin{equation}\label{self}
\left\{\begin{array}{l}{\left(D_{1}+i D_{2}\right) \phi=0} \\ {F_{12}=q^{2}|\phi|^{2}-\mu \mathfrak{n}} \\ {-A_{0}=\mathfrak{n}-\frac{q^{2}}{\mu}} \\ {-\Delta A_{0}+\mu F_{12}=-2 q^{2} A_{0}|\phi|^{2}}\end{array}\right.
\end{equation} together with the boundary conditions \eqref{1.2}.  Due to
Jaffe-Taubes argument in \cite{JT, Ta},  the self-dual equation
\eqref{self}  is reduced to the following elliptic system (see \cite{ChK, Du, HK, RT, T} for the detail):
\begin{equation}\label{eq1}
\left\{\begin{array}{l}
\Delta u=\lambda\mu e^u-\mu N+4\pi\sum_{i=1}^n m_{i}\delta_{p_i},\\
\Delta N=\mu (\mu+\lambda e^u)N-\lambda \mu(\mu +\lambda)e^u,
\end{array}
\right.  \textrm{in} \ \ \Omega.
\end{equation}
where  $u=\ln|\phi|^2$, $\lambda=\frac{2q^2}{\mu}$, and  $N=2\mathfrak{n}$.  Here,  $\delta_{p_i}\in \Omega$ stands for the
Dirac measure concentrated at $p_i$, and $p_i\neq p_j$ if $i\neq j$.  Each $p_i$ is called a vortex point  and  $m_i\in\mathbb{N}$ is  the multiplicity of $p_i$.

In view of Remark \ref{rem1} below, the equation \eqref{eq1}  has  two different kinds of periodic solutions
satisfying one of the following asymptotic behaviors:\begin{equation}
\label{def_sol}
\begin{aligned}
  \left(u_{\lambda,\mu},\frac{N_{\lambda,\mu}}{\lambda}\right) \to (0,1) \quad\mbox{a.e. on }~ \Omega \quad\mbox{as }~ \lambda\to\infty,\  \mu\gg\lambda,
 \quad&\mbox{(topological solution)} \\
  \left(u_{\lambda,\mu},\frac{N_{\lambda,\mu}}{\lambda}\right) \to (-\infty,0) \quad\mbox{a.e. on }~ \Omega \quad\mbox{as }~ \lambda\to\infty,\  \mu\gg\lambda,
 \quad&\mbox{(nontopological solution)}
\end{aligned}
\end{equation}
Among the results obtained in \cite{ChK2, RT, R} for (MCS) model, let us review  the  (CS) limit   results for \eqref{eq1} on a flat two torus $\Omega$.  In \cite{ChK2},  Chae and Kim showed the existence of topological solution for \eqref{eq1}, and its (CS) convergence whenever $\mu\to\infty$ and $\lambda$ is fixed (see \cite{ChK} for the study  in $\mathbb{R}^2$).   In \cite{RT}, Ricciardi and Tarantello extended the (CS) limit  to other class of solutions. They  showed that there exists $\lambda_0>0$ sufficiently large such that  for any $\lambda>\lambda_0$, there is $\mu_{\lambda}>0$   satisfying that  if $\mu>\mu_{\lambda}$ , then \eqref{eq1} has at least two distinct solutions, topological solution and mountain pass solution, which converge to (CS) multivortices as $\mu\to\infty$.  Moreover, they derived the asymptotic behavior of these (CS) multivortices for  not only topological solution but also mountain pass solution provided $\mathfrak{M}=\sum_{i=1}^nm_i=1$ as $\lambda\to\infty$.  In \cite{R}, Ricciardi improved the results \cite{ChK2, RT} by obtaining  the (CS) limit for arbitrary sequence of solutions in $C^q$ norm for any $q\ge0$ whenever $\lambda=1$.

For given arbitrary configuration of vortex points,  our first goal is   to obtain the uniform (CS) limit result of (MCS) model for any class of solutions for \eqref{eq1} with large $\lambda, \mu>0$, and derive  the following Brezis-Merle type alternatives for (MCS) model.
\begin{theorem}\label{BrezisMerletypealternatives}
Let $Z\equiv\cup_{i}\{p_{i}\}$.
We assume that  $\{(u_{\lambda,\mu}, N_{\lambda,\mu})\}$ is a sequence of solutions of \eqref{eq1}.  Then
\begin{equation}\label{connection}\lim_{\lambda,\mu\to\infty,\ \frac{\lambda}{\mu}\to0}\left\|e^{u_{\lambda,\,\mu}}-\frac{N_{\lambda,\mu}}{\lambda}\right\|_{L^\infty(\Omega)}=0.\end{equation}
Moreover, as $\mlt$, up to subsequences, one of the following holds:

(i) $u_{\lambda,\mu}\to 0$ uniformly on any compact subset of $\Omega\setminus  Z$;

(ii)   $\ulm+2\ln\lambda-u_0\to\hat{w}$ in $C^{1}_{\textrm{loc}}(\Omega)$, where $\hat{w}$ satisfies  $\Delta \hat{w} +e^{\hat{w}+u_0}=4\pi\mathfrak{M}$;

(iii)  there exists a nonempty finite set $B=\{\hat{q}_1,\cdots,\hat{q}_k\}\subset\Omega$ and $k$-number of sequences  of points $q^{j}_{\lambda,\mu}\in\Omega$ such that $\lim_{\lambda,\mu\to\infty,\ \frac{\lambda}{\mu}\to0}q^{j}_{\lambda,\mu}=\hat{q}_j$, $\left(u_{\lambda,\mu}+2\ln\lambda\right)(q^{j}_{\lambda,\mu})\to+\infty$, and $u_{\lambda,\mu}+2\ln\lambda\to-\infty$ uniformly on any compact subset of $\Omega\setminus B$. Moreover,
\begin{equation*}\lambda^2 e^{u_{\lambda,\mu}}\left(1-\frac{N_{\lambda,\mu}}{\lambda}\right)\to\sum_j\alpha_j\delta_{\hat{q}_j},\ \ \alpha_j\ge8\pi,\end{equation*}
in the sense of measure.
\end{theorem}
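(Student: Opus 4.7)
My plan is to tackle the theorem in two stages. First I would establish the connection estimate \eqref{connection} by an elliptic analysis of the auxiliary quantity $\tilde v_{\lambda,\mu}:=\lambda e^{\ulm}-\nl$; second, I would use \eqref{connection} to reduce \eqref{eq1} to a perturbed Chern--Simons equation for $\ulm$ and apply a Brezis--Merle type trichotomy.

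\textbf{Stage 1.} As a preliminary, an application of the maximum principle to the second equation in \eqref{eq1}, rewritten as $(-\Delta+\mu^2+\lambda\mu e^{\ulm})\nl=\lambda\mu(\mu+\lambda)e^{\ulm}$, and to the equation satisfied by $\nl-\lambda$ yields the physical bounds $0\le\nl\le\lambda$ and $e^{\ulm}\le1$, hence $-\lambda\le\tilde v_{\lambda,\mu}\le0$. Next, a direct computation using both equations in \eqref{eq1}, combined with the identity $\Delta e^{\ulm}=\mu e^{\ulm}\tilde v_{\lambda,\mu}+e^{\ulm}|\nabla\ulm|^2$ valid away from the vortex points (the Dirac contributions in $\Delta\ulm$ being annihilated by the vanishing of $e^{\ulm}$ at each $p_i$), gives the elliptic identity
\[
(-\Delta+\mu^2+2\lambda\mu e^{\ulm})(-\tilde v_{\lambda,\mu})=\lambda e^{\ulm}|\nabla\ulm|^2+\lambda^2\mu e^{\ulm}(1-e^{\ulm})\ \ge\ 0.
\]
The maximum principle then confirms $\tilde v_{\lambda,\mu}\le0$. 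To get the quantitative decay, I would integrate the first equation in \eqref{eq1} to obtain the $L^1$ bound $\int_\Omega(-\tilde v_{\lambda,\mu})=4\pi\mathfrak M/\mu$, and then combine it with the elliptic equation above via a comparison argument, using either the Green's function of $-\Delta+\mu^2$ on the flat torus $\Omega$ or an explicit super-solution tailored to the expected asymptotic profile $-\tilde v_{\lambda,\mu}\sim\lambda^2 e^{\ulm}(1-e^{\ulm})/\mu$. This produces $\|\tilde v_{\lambda,\mu}\|_{L^\infty(\Omega)}\le C\lambda^2/\mu$; dividing by $\lambda$ proves \eqref{connection} with rate $O(\lambda/\mu)$.

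\textbf{Stage 2.} Given \eqref{connection}, the first equation in \eqref{eq1} reduces, up to an $L^\infty$ error of size $o(\lambda^2)$, to the scalar semilinear equation $\Delta\ulm=-\lambda^2e^{\ulm}(1-e^{\ulm})+4\pi\sum_{i=1}^nm_i\delta_{p_i}$. Setting $\pl:=\ulm+2\ln\lambda-u_0$, with $u_0$ as in the statement (the logarithmic background absorbing the Dirac source), we have $\lambda^2e^{\ulm}=e^{\pl+u_0}$ and the reduced equation becomes a perturbation of the periodic Chern--Simons equation $\Delta\hat w+e^{\hat w+u_0}=4\pi\mathfrak M$. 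Applying the Brezis--Merle alternative to $\{\pl\}$ yields three possibilities. First, if $\pl$ is locally bounded above on $\Omega\setminus Z$, elliptic regularity gives $C^1_{\mathrm{loc}}(\Omega)$-convergence to $\hat w$, which is case (ii). Second, if $\pl$ is locally unbounded yet $e^{\pl+u_0}$ does not concentrate, then \eqref{connection} combined with the integral identity $\int_\Omega(\lambda e^{\ulm}-\nl)=4\pi\mathfrak M/\mu$ forces $e^{\ulm}\to1$, equivalently $\ulm\to0$, locally uniformly on $\Omega\setminus Z$, which is case (i). Third, if $\pl$ concentrates at a finite set $B=\{\hat q_1,\ldots,\hat q_k\}$ with $\pl\to-\infty$ on compact subsets of $\Omega\setminus B$, a cut-off/covering argument on the perturbed Chern--Simons equation, together with the Li--Shafrir mass quantization, yields $e^{\pl+u_0}\rightharpoonup\sum_j\alpha_j\delta_{\hat q_j}$ in the sense of measures with $\alpha_j\ge8\pi$, which is case (iii). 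By \eqref{connection}, the measures $\lambda^2 e^{\ulm}(1-\nl/\lambda)$ and $e^{\pl+u_0}(1-e^{\pl+u_0}/\lambda^2)$ share the same weak limit.

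\textbf{Main obstacle.} The quantitative $L^\infty$ bound $\|\tilde v_{\lambda,\mu}\|_{L^\infty}=O(\lambda^2/\mu)$ is the crux of the argument. The source term $\lambda e^{\ulm}|\nabla\ulm|^2$ in the equation for $-\tilde v_{\lambda,\mu}$ is only locally bounded because of the cancellation $e^{\ulm}|\nabla\ulm|^2\sim|x-p_i|^{2m_i-2}$ at each vortex point $p_i$; any super-solution must respect this vanishing rate, which forces the barrier to be proportional to $e^{\ulm}(1-e^{\ulm})/\mu$ rather than a flat constant, with possible modifications near each $p_i$ to absorb the singularity of $|\nabla\ulm|^2$. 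A secondary subtlety is the passage to the nonlinear limit in case (ii), which requires upgrading \eqref{connection} from $L^\infty$ to $C^0_{\mathrm{loc}}$ on the error term of the reduced scalar equation by elliptic bootstrap applied to $\tilde v_{\lambda,\mu}$; and the separation of cases (i) and (iii), both featuring unbounded $\pl$, demands careful control of the local mass of $e^{\pl+u_0}$ on compact subsets of $\Omega\setminus Z$ to decide whether blow-up is uniform (topological) or concentrated at finitely many points.
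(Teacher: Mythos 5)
Your Stage~1 elliptic identity is correct: writing $\tilde v_{\lambda,\mu}=\lambda e^{\ulm}-\nl$, away from $\cup_i\{p_i\}$ one indeed has
\[
\bigl(-\Delta+\mu^2+2\lambda\mu e^{\ulm}\bigr)\bigl(-\tilde v_{\lambda,\mu}\bigr)
=\lambda e^{\ulm}\,|\nabla\ulm|^2+\lambda^2\mu\, e^{\ulm}\bigl(1-e^{\ulm}\bigr)\ge 0,
\]
and $\int_\Omega(-\tilde v_{\lambda,\mu})=4\pi\mathfrak M/\mu$. This is a genuinely different route from the paper's: the paper does not manipulate $\tilde v_{\lambda,\mu}$ at all, but instead argues by contradiction, rescaling $\nl/\lambda$ at the scale $1/\mu$ about a hypothetical point where the estimate fails and invoking the Liouville rigidity of $-\Delta+1$ on $\mathbb R^2$ (Lemma~\ref{lem3.2}), together with a Green's-representation bound on $\nabla(\ulm-u_0+\nl/\mu)$ (Lemma~\ref{grad_sol}). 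A direct barrier argument, if it could be closed, would be a nice shortcut.

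However, your Stage~1 as written has a real gap, and your ``main obstacle'' diagnosis misplaces it. A \emph{flat} barrier $\Phi\equiv C\lambda^2/\mu$ does work here, so the barrier does not need to vanish at the vortices or track $e^{\ulm}(1-e^{\ulm})$; the actual difficulty is establishing the pointwise bound on the source, $\lambda e^{\ulm}|\nabla\ulm|^2\le C\lambda^2\mu$ (in fact $e^{\ulm}|\nabla\ulm|^2\le C\lambda^2$ is what one should aim for). This requires controlling $|\nabla\ulm|$ pointwise, which means controlling \emph{both} $\nabla(\ulm-u_0+\nl/\mu)$ --- this is precisely the paper's Lemma~\ref{grad_sol} --- \emph{and} $\nabla\nl$ separately, which is not provided anywhere in your plan (nor in the paper, which does not need it). One would need, for instance, a Landau--Kolmogorov interpolation $\|\nabla\nl\|_{L^\infty}\lesssim\|\nl\|_{L^\infty}^{1/2}\|\Delta\nl\|_{L^\infty}^{1/2}\lesssim\lambda\mu$, and then near each $p_i$ the additional observation that $e^{(\ulm-u_0)(p_i)}\lesssim\lambda^{2m_i}$ (which follows from $e^{\ulm}\le1$ combined with the Lipschitz bound on $\ulm-u_0$ just obtained). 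None of these ingredients appears in your sketch; without them the comparison $(-\Delta+\mu^2+2\lambda\mu e^{\ulm})\Phi\ge$ RHS is not established, and the claimed rate $O(\lambda/\mu)$ is unproved. Your Stage~1 is a viable alternative, but it hinges on a nontrivial gradient estimate for $\nl$ that must be supplied.

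Your Stage~2 follows the same Brezis--Merle skeleton as the paper but skips the step that makes the coupled system genuinely harder than the scalar Chern--Simons problem. To exclude small concentration masses, the paper runs a Pohozaev identity on the \emph{exact} equation for $\phi_{\lambda,\mu}=w_{\lambda,\mu}-2m\ln|x|$ (which still contains $\nl$), producing extra terms involving $\nabla\nl$; these are absorbed by also multiplying the second equation of \eqref{eq1} by $x\cdot\nabla\nl$, and the resulting boundary integrals with $(\partial_\theta\nl)^2/r^2$ and $\mu^2\nl^2$ are shown to vanish only along a carefully chosen sequence $r_l\to0$ extracted from an energy estimate. Substituting \eqref{connection} into the equation and then ``applying Brezis--Merle to the scalar reduced equation'' does not by itself license this, since the Pohozaev identity differentiates the error term. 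In addition, your appeal to Li--Shafrir mass quantization is the wrong reference: that theorem is for the Liouville equation and gives $\alpha_j\in8\pi\mathbb N$, while the relevant entire problem here is $\Delta w+e^w(1-e^w)=0$, whose radial masses $2\pi\beta(s)$ sweep the full interval $(8\pi,\infty)$ (Lemma~\ref{propertyofentiresolution}); the paper obtains $\alpha_j\ge8\pi$ via the Choe--Kim argument, not by quantization.
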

 The most important step in the proof for Theorem \ref{BrezisMerletypealternatives} is to derive the relation \eqref{connection} between $u_{\lambda,\mu}$ and $N_{\lambda,\mu}$. In order to achieve this purpose, we apply the Green's representation formula for  the gradient estimation of $u_{\lambda,\mu}$, and use  the nondegeneracy of the operator $-\Delta +1$ in $\mathbb{R}^2$ after a suitable scaling.

We note that the elliptic system \eqref{eq1} is equivalent to
\begin{equation}\label{main_eq}
\left\{\begin{array}{l}
\Delta \left( u+\frac{N}{\mu}\right)=-\lambda^2 e^u\left(1-\frac{N}{\lambda}\right)+4\pi\sum_{i=1}^n m_{i}\delta_{p_i},\\
\Delta N=\mu^2 (1+\frac{\lambda}{\mu} e^u)N-\lambda \mu^2\left(1+\frac{\lambda}{\mu}\right)e^u
\end{array}
\right. \mbox{ in }\Omega.
\end{equation}
 To the best of our knowledge, the estimation \eqref{connection} in Theorem \ref{BrezisMerletypealternatives} has been known for a  fixed constant $\lambda>0$ as $\mu\to\infty$. We improve this result holds uniformly for large $\lambda>0$ satisfying $\lambda\ll\mu$.
Due to the estimation \eqref{connection}, \eqref{main_eq} would be regarded  as a perturbation of the following equation arising from (CS) model:
\begin{equation}\label{cs_eq}
 \Delta u=-\lambda^2 e^u\left(1- e^u\right)+4\pi\sum_{i=1}^n m_{i}\delta_{p_i}
\quad \mbox{ in }\Omega.
\end{equation}
The   corresponding result (i)-(iii) in  Theorem \ref{BrezisMerletypealternatives} for (CS) equation \eqref{cs_eq} has been proved in \cite{CK}  based on the arguments for Brezis-Merle type alternatives (see \cite{BCCT,BT,BM,CK, NT0,NT}).
However, since our case is the coupled  system problem, a major  obstacle arises from the interaction between two components $u_{\lambda,\mu}$ and $N_{\lambda,\mu}$. In order to overcome this difficulty, we should carry out a careful estimation for the gradient of $N_{\lambda,\mu}$ in the Pohozaev identity.

In \cite{RT},  the authors made a conjecture such that   the density of superconducting particles $e^{u_{\lambda,\mu}}$   of \eqref{eq1} converges   to $e^{u_\lambda}$  of \eqref{cs_eq} as $\mu\to\infty$  without the restriction $\mathfrak{M} = 1$, and it was proved in \cite{R} for fixed $\lambda=1$.   This result would be valid even uniformly for $\lambda>0$ since \eqref{main_eq} and \eqref{cs_eq} share   the similar asymptotic behavior   in  (i)-(iii) of Theorem \ref{BrezisMerletypealternatives} for  any sequence of solutions to \eqref{eq1} including even  mountain pass solution  and for any $\mathfrak{M}>0$. Moreover, we can improve the  (CS) convergence for blow up solutions, which are constructed below, in terms of not only $e^{u_{\lambda,\mu}}$ but also $u_{\lambda,\mu}$. We will continue to discuss the detail of uniform (CS) convergence for arbitrary solutions in forthcoming paper.

Now we consider the asymptotic behavior (iii)   in  Theorem \ref{BrezisMerletypealternatives}. The case (iii) is called blow up phenomena. More precisely, we define the blow up solutions as follows:
\begin{definition} Let $B= \{\hat{q}_j\}_{j=1}^k\subset\Omega$ be a set of finite points.  If  $\{(u_{\lambda,\mu}, N_{\lambda,\mu})\}$ is a family of solutions of \eqref{eq1} and there exist $k$-number of sequence of points $q^{j}_{\lambda,\mu}$, $j=1,\cdots,k$, satisfying

(i) $\lim_{\mlt}\left(u_{\lambda,\mu}+2\ln\lambda\right)(q^{j}_{\lambda,\mu})=+\infty,$ and

(ii) $\lim_{\mlt}q^{j}_{\lambda,\mu}=\hat{q}_j$,

 then $B$ is called a blow-up set and
$\{(u_{\lambda,\mu}, N_{\lambda,\mu})\}$ is called a family of bubbling solutions (or blow up solutions) of \eqref{eq1} at $B$
\end{definition}
In view of Theorem  \ref{BrezisMerletypealternatives}, we note that the blow up phenomena implies the concentration of density for the nonlinear terms in the first equation in \eqref{main_eq}.
We emphasize that this observation provides the affirmative answer for the open problem raised in \cite{T2}.  In other words, we would like to show  the existence of blow up solutions with the concentrating
property at the vortex points. It turns out that the construction of  solutions blowing up outside vortex points, that is, at the regular points, is more difficult than at the vortex points since the limit problem for the first one has nontrivial kernel. We first construct   solutions blowing up at a regular point, and continue to study solutions blowing up at a vortex point. 
\begin{theorem}\label{thm1}

Assume $\mathfrak{M}>2$.
Let $\hat{q}$ be  a non-degenerate critical point of $u_0$ defined in (\ref{u_0}). Assume that  $\lambda, \ \mu $  are large enough and satisfy  $(\ln\lambda)\lambda^2\ll\mu$.  Then  (\ref{eq1}) has a solution $(u_{\lambda,\mu}, N_{\lambda,\mu})$ satisfying

(i) $\lambda^2 e^{u_{\lambda,\mu}}\left(1-\frac{N_{\lambda,\mu}}{\lambda}\right)\to 4\pi \mathfrak{M}\delta_{\hat{q}}$ in the sense of measure as $\lambda, \mu  \to \infty, \frac{(\ln\lambda)\lambda^2}{\mu}\to0$,

(ii) $\max_{y\in \Omega}u_{\lambda,\mu}(y)\geq c$ for some constant $c\in\mathbb{R}$, and

(iii)  $\frac{N_{\lambda,\mu}}{\lambda}\to 0$ uniformly on any compact subset of  $\Omega\setminus \{\hat{q}\}$ as $\lambda, \mu  \to \infty, \frac{(\ln\lambda)\lambda^2}{\mu}\to0$.
\end{theorem}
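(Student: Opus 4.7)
The plan is a Lyapunov--Schmidt reduction for the coupled system (\ref{main_eq}), built on top of the analogous blow-up construction for the pure Chern--Simons equation (\ref{cs_eq}) at a non-degenerate critical point of $u_{0}$. The key new step is to eliminate $N$ by inverting the second equation, exploiting the highly coercive operator $-\Delta+\mu^{2}\bigl(1+\tfrac{\lambda}{\mu}e^{u}\bigr)$ that appears when $\mu\gg\lambda^{2}$, and thereby to reduce (\ref{main_eq}) to a controlled scalar perturbation of (\ref{cs_eq}) to which known bubbling constructions can be adapted.

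For a candidate $u$ I would solve the $N$-equation and write $N=\lambda e^{u}+w$. Using $\Delta u=-\mu w$ from the first equation of (\ref{main_eq}) away from the vortex points, a direct computation yields
\[
-\Delta w \;+\; \mu^{2} w \;+\; 2\lambda\mu\, e^{u} w \;=\; \lambda\, e^{u}\lvert\nabla u\rvert^{2} \;+\; \lambda^{2}\mu\, e^{u}(1-e^{u}).
\]
The coercivity with gap $\mu^{2}$ produces quantitative control on $w$ in suitable weighted norms; in particular $\|w\|_{L^{\infty}(\Omega)}\to 0$, quantifying \eqref{connection}, with improved localization away from the concentration point. This defines an operator $N=N[u]$ and reduces (\ref{main_eq}) to the scalar equation
\[
\Delta u + \lambda^{2} e^{u}(1-e^{u}) \;=\; 4\pi\sum_{i=1}^{n} m_{i}\delta_{p_{i}} \;+\; \mathcal{R}_{\lambda,\mu}[u],
\]
where $\mathcal{R}_{\lambda,\mu}[u]$ collects the coupling terms and is small under the hypothesis $(\ln\lambda)\lambda^{2}\ll\mu$, the logarithmic factor accounting for the slow decay of the Liouville bubble and the matching losses in the outer region.

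Next, following the CS blow-up construction at a regular point, I would introduce the approximate solution $U_{q,\gamma}(x)=u_{0}(x)-2\ln\lambda+V_{q,\gamma}(x)+(\text{lower-order correctors})$, where $V_{q,\gamma}(x)=\ln\frac{8\gamma^{2}}{(\gamma^{2}+|x-q|^{2})^{2}}$ is a rescaled Liouville bubble centered at $q$ near $\hat{q}$, and the parameters $(q,\gamma)$ are tuned so that $\lambda^{2}e^{U_{q,\gamma}}(1-e^{U_{q,\gamma}})\rightharpoonup 4\pi\mathfrak{M}\delta_{\hat{q}}$. Setting $u=U_{q,\gamma}+\phi$ and linearizing, the resulting operator is, after bubble rescaling, a compact perturbation of the standard Liouville linearization $-\Delta-\tfrac{8}{(1+|y|^{2})^{2}}$ on $\mathbb{R}^{2}$, whose $L^{2}$-kernel is three-dimensional and spanned by the translation and dilation modes $Z_{0},Z_{1},Z_{2}$. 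Standard a priori estimates (as in Baraket--Pacard-type arguments) give uniform invertibility on the orthogonal complement in a suitable weighted norm, and a contraction argument produces a unique correction $\phi_{q,\gamma}$ with $\|\phi_{q,\gamma}\|_{*}=o(1)$, provided $\lambda$ is large and $(\ln\lambda)\lambda^{2}/\mu$ is small.

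The finite-dimensional reduction then amounts to three scalar conditions on $(q,\gamma)$. A Pohozaev-type expansion identifies the dilation condition as one determining $\gamma$ in terms of $\lambda$, and the two translation conditions read $\nabla u_{0}(q)+o(1)=0$; the non-degeneracy of $\hat{q}$ as a critical point of $u_{0}$ together with the implicit function theorem produces a unique $(q_{\lambda,\mu},\gamma_{\lambda,\mu})$ with $q_{\lambda,\mu}\to\hat{q}$, and the desired solution is $(u_{\lambda,\mu},N_{\lambda,\mu})=(U_{q_{\lambda,\mu},\gamma_{\lambda,\mu}}+\phi,\,N[U_{q_{\lambda,\mu},\gamma_{\lambda,\mu}}+\phi])$. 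Conclusions (i)--(iii) then follow, respectively, from the concentration of $\lambda^{2}e^{U}(1-e^{U})$ near $\hat{q}$ modulo the negligible $w$, from the explicit height of the bubble, and from the pointwise decay of $N/\lambda$ away from $\hat{q}$ obtained in the first step. The principal obstacle is precisely that first step: near the bubble, $\lvert\nabla u\rvert^{2}$ and the derivative contributions to $w$ carry extra factors arising from the bubble width, and keeping $w$, $\nabla w$ and the induced perturbation $\mathcal{R}_{\lambda,\mu}[u]$ small enough uniformly---exactly in the borderline range $(\ln\lambda)\lambda^{2}\ll\mu$---is the technical heart of the construction, requiring careful propagation of the estimates through both the linear theory and the reduction.
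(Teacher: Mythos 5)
Your plan has the right global architecture (approximate solution plus Lyapunov--Schmidt reduction, with the $N$-equation treated as a coercive, rapidly decaying perturbation) and the role of the non-degeneracy of $\hat q$ in the reduced equations $\nabla u_0(q)+o(1)=0$ is correctly identified. But the choice of bubble profile is wrong in a way that would prevent the construction from proving the theorem as stated.

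You propose to build the ansatz from a rescaled Liouville bubble $V_{q,\gamma}=\ln\frac{8\gamma^2}{(\gamma^2+|x-q|^2)^2}$. The Chen--Li quantization forces $\int_{\mathbb R^2}e^{V_{q,\gamma}}=8\pi$, so a single Liouville bubble can only carry concentrated mass $8\pi$. But the total mass in Corollary \ref{cor1} is $4\pi\mathfrak{M}$, and Theorem \ref{thm1} demands $\lambda^2e^{u_{\lambda,\mu}}(1-\tfrac{N_{\lambda,\mu}}{\lambda})\to 4\pi\mathfrak{M}\,\delta_{\hat q}$ for \emph{every} $\mathfrak{M}>2$; a Liouville profile can match this only when $\mathfrak{M}=2$, which is excluded. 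Relatedly, conclusion (ii), $\max_\Omega u_{\lambda,\mu}\geq c>-\infty$, is exactly the regime in which the blow-up limit (after rescaling by $\lambda$) must be an entire solution of $\Delta w+e^w(1-e^w)=0$ on $\mathbb R^2$, not of the Liouville equation: since $w(0)=\max u_{\lambda,\mu}+o(1)\in[c,0]$, the term $e^{2w}$ cannot be dropped. The paper takes as building block precisely the radial entire Chern--Simons solution $w$ normalized so that $\beta=\tfrac{1}{2\pi}\int e^w(1-e^w)=2\mathfrak{M}$ (the decay in \eqref{decay}); this ties $w(0)$ to $\mathfrak{M}$ and simultaneously delivers the lower bound in (ii) and the correct limiting mass. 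Your bubble height $\ln(8/\gamma^2)$ either diverges (if $\gamma\lambda\to\infty$ or $\gamma\lambda\to 0$) or, if $\gamma\sim\lambda^{-1}$ so the height is bounded, produces mass $8\pi$; there is no tuning of $\gamma$ that reconciles (i) and (ii) unless $\mathfrak{M}=2$.

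This also changes the linear theory and the size of the reduced system. The Chern--Simons entire solution has no scaling invariance, and the Lin--Yan framework used here (Theorem B.3 of \cite{LY2}, quoted as Theorem \ref{theorema}) works modulo a two-dimensional kernel generated only by translations $W_{q,1},W_{q,2}$; the reduced problem, Lemma \ref{reducedproblem}, is the two-equation system $a_0 D_j u_0(q)+o(1)=0$, $j=1,2$, solved by the non-degeneracy of $\hat q$. Your three-dimensional reduction (translations plus dilation, with a Pohozaev identity fixing $\gamma$) is the Liouville-type machinery and does not match the structure forced by the Chern--Simons profile. Finally, a minor organizational difference: the paper does not reduce to a scalar equation by eliminating $N$; it uses the ansatz $u+\tfrac{N}{\mu}=U_{\lambda,q}+\varphi$, $\tfrac{N}{\lambda}=e^{U_{\lambda,q}+u_0}(1+\varphi)+S$, and runs a contraction for the pair $(\varphi,S)$ (Proposition \ref{contractionmap}) with the coercive estimate for $-\Delta+\mu^2$ (Theorem \ref{theoremb}) controlling $S$; your idea of first inverting the $N$-equation is a reasonable alternative organization and is not where the gap is, but it would still have to be built on the correct (Chern--Simons) profile.
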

\begin{remark}
By integrating  the first equation of 	(\ref{main_eq}), we have 
\begin{equation*}
\int_{\Omega}\lambda^2e^{u}(1-\frac{N}{\lambda})=4\pi\mathfrak{M}.	
\end{equation*}
Moreover, in view of  Lemma \ref{lemma2.1} below, one knows that the local mass of the Chern-Simons equation without vortex points is strictly greater than $8\pi$. So necessarily one has  $4\pi\mathfrak{M}>8\pi$, that is, $\mathfrak{M}>2$. This implies that when there is only one vortex point with multiplicity one, there should be no such kind of bubbling solutions considered in Theorem \ref{thm1}.
\end{remark}
Motivated by Theorem \ref{BrezisMerletypealternatives} and Theorem \ref{thm1}, we also could solve the open problem raised in \cite{T2}, and show  the existence of blow up solutions with the concentrating
property at the vortex point.

\begin{theorem}\label{5thm}
Assume $\mathfrak{M}>4$, $p_1\neq p_j$, $j=2,\cdots,n$, and $1\ll(\ln\lambda)^5\lambda^5\ll\mu$.
   Then  (\ref{eq1}) has a solution $(u_{\lambda,\mu}, N_{\lambda,\mu})$ satisfying

(i) $\lambda^2 e^{u_{\lambda,\mu}}\left(1-\frac{N_{\lambda,\mu}}{\lambda}\right)\to 4\pi \mathfrak{M}\delta_{p_1}$ in the sense of measure  as $\lambda, \mu  \to \infty, \frac{(\ln\lambda)^5\lambda^5}{\mu}\to0$,

(ii) $\max_{y\in \Omega}u_{\lambda,\mu}(y)\geq c$ for some constant $c\in\mathbb{R}$,

(iii) $\frac{N_{\lambda,\mu}}{\lambda}\to 0$  uniformly on any compact subset of $\Omega\setminus   \{p_1\}$  as $\lambda, \mu  \to \infty, \frac{(\ln\lambda)^5\lambda^5}{\mu}\to0$.
\end{theorem}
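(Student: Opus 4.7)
The proof will proceed by a Lyapunov--Schmidt reduction scheme, paralleling the strategy used for Theorem \ref{thm1}, but modified to accommodate the singular geometry at the vortex point $p_1$. The plan is as follows.

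\medskip

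\textbf{Construction of the ansatz.} First I would build an approximate solution $(U_{\lambda,\mu},V_{\lambda,\mu})$. Near $p_1$, the leading profile for $u_{\lambda,\mu}$ should be a rescaled \emph{singular} Chern--Simons entire bubble, i.e.\ a radial solution of $\Delta w + |y|^{2m_1}e^{w}(1-e^{w})=0$ in $\R^2$ with the built-in singularity of weight $m_1$ at the origin. The concentration rate $\rho_{\lambda,\mu}$ is fixed by requiring the local mass to equal $4\pi\mathfrak{M}$, which by the classification of singular CS bubbles is admissible precisely when $4\pi\mathfrak{M}>8\pi(1+m_1)$; with $m_1\ge 1$ this is guaranteed by the hypothesis $\mathfrak{M}>4$. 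Away from $p_1$ the bubble is cut off and glued to a global background determined by the Green's function with the 't Hooft boundary conditions \eqref{1.2} and the Dirac sources $4\pi m_i\delta_{p_i}$ at the remaining vortices. The approximation $V_{\lambda,\mu}$ for the second component is taken as $\lambda e^{U_{\lambda,\mu}}$ plus a corrector of order $O(\lambda/\mu)$ solving the linearization of the second equation of \eqref{main_eq}; the validity of this choice is motivated precisely by the relation \eqref{connection} supplied by Theorem \ref{BrezisMerletypealternatives}.

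\medskip

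\textbf{Linear theory and reduction.} Writing $u=U_{\lambda,\mu}+\phi$, $N=V_{\lambda,\mu}+\psi$, I rescale around $p_1$ at rate $\rho_{\lambda,\mu}$ and obtain for $\phi$ a linearized operator whose principal part is $\Delta + |y|^{2m_1}e^{w}(1-2e^{w})$. Its bounded kernel, by the singular analog of Chen--Lin type non-degeneracy results, is spanned by a finite number of elements corresponding to translation/dilation of the bubble compatible with the singular weight. I would then invert this operator uniformly in $\lambda,\mu$ on the subspace orthogonal to the approximate kernel, in weighted $L^\infty$--$C^{0,\alpha}$ spaces adapted to the $|y|^{2m_1}$ weight; the coupling with $\psi$ is handled by solving the second equation of \eqref{main_eq} for $\psi$ in terms of $\phi$ (this is uniformly solvable because $\mu$ is large) and substituting back, which adds a perturbation of size $O(\lambda/\mu)$ in the weighted norm. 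The projected nonlinear problem is then solved by contraction mapping.

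\medskip

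\textbf{Reduced problem and role of the hypothesis.} The finite-dimensional reduction is solved by adjusting the concentration point $q^{1}_{\lambda,\mu}\to p_1$ and the concentration rate $\rho_{\lambda,\mu}$. Because the singularity is pinned at the vortex $p_1$, translation in the reduced equations corresponds to picking up the regular part of the Green's function and the gradient of the vortex source term, yielding non-degenerate reduced equations once $\mathfrak{M}>4$ is in force (so that the mass parameter lies strictly in the admissible range). Tracking the size of the error of the ansatz in weighted norms gives an error bounded by $(\ln\lambda)^{5}\lambda^{5}\cdot \mu^{-1}$ after undoing the scaling, which motivates the assumption $(\ln\lambda)^{5}\lambda^{5}\ll\mu$: the five powers of $\lambda$ come from successive losses in the singular weighted inverse estimate and in the transfer of errors between the two equations, and the $\ln\lambda$ factors absorb logarithmic weights appearing in the Green's function decomposition near $p_1$. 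Conclusions (i)--(iii) then follow, with (i) from the mass prescription, (ii) from the boundedness of $u_{\lambda,\mu}$ at the top of the bubble, and (iii) from \eqref{connection} combined with the decay of the rescaled bubble away from the concentration point.

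\medskip

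\textbf{Main obstacle.} The hardest part is establishing a quantitative, uniform invertibility of the linearized CS operator at the singular bubble in the weighted norms, while simultaneously controlling the error fed back by the second equation: the singular weight $|y|^{2m_1}$ forces one to work with norms that are anisotropic at the vortex, and the coupling to $\psi$ introduces nonlocal terms that must be shown to be genuinely subleading under $(\ln\lambda)^{5}\lambda^{5}\ll\mu$. This quantitative non-degeneracy, rather than the construction of the ansatz itself, is the technical heart of the proof and ultimately dictates the strength of the relation imposed between $\lambda$ and $\mu$.
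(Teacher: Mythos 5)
Your overall strategy (build a singular Chern--Simons bubble at $p_1$, use \eqref{connection} to set up the second component, solve the coupled system perturbatively) matches the spirit of the paper, but there is a genuine conceptual gap in your linear theory that would derail the argument. You assert that the linearization of the singular bubble equation $\Delta w + |y|^{2m_1}e^w(1-e^w)=0$ has a ``bounded kernel \dots spanned by a finite number of elements corresponding to translation/dilation,'' and you then set up a finite-dimensional reduction in which you ``adjust the concentration point $q^1_{\lambda,\mu}$ and the concentration rate $\rho_{\lambda,\mu}$.'' That is precisely what happens at a \emph{regular} blow-up point (Theorem \ref{thm1}, where the profile is translation-invariant and $q$ is free), but it is exactly the opposite of what happens at the vortex. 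Because the Dirac source $4\pi m_1\delta_{p_1}$ pins the singularity to $p_1$, there is no translation freedom, and the scale is fixed at $\lambda^{-1}$; there is no dilation freedom either. Correspondingly, the linearized operator $L_1 = \Delta + \lambda^2 f(\lambda|y-p_1|)1_{B_{2d}(p_1)}$ is an \emph{isomorphism} between the weighted spaces $X_\alpha$ and $Y_\alpha$ (this is Theorem \ref{5theorema1}, quoted from \cite{LY1}), so the nonlinear problem is solved directly by a contraction mapping with no projection, no orthogonality conditions, and no reduced finite-dimensional equation. Your reduced equation ``matching the gradient of the regular part of the Green's function,'' and the claim that $\mathfrak{M}>4$ enters through non-degeneracy of such a reduced system, therefore has nothing to match in this setting; $\mathfrak{M}>4$ is instead a local-mass constraint (Lemma \ref{lemma2.1}) needed to guarantee that the singular radial bubble with $4\pi\mathfrak{M}$ total mass exists. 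The paper's own introduction flags exactly this point: the regular case is harder \emph{because} the limit problem has nontrivial kernel, while the vortex case does not.

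Two secondary inaccuracies. First, the paper does not work with a general weight $|y|^{2m_1}$; it takes $m_1=1$, i.e.\ the profile solves $\Delta V + |x|^2 e^V(1-|x|^2e^V)=0$, consistent with the $\mathfrak{M}>4$ threshold ($4\pi\mathfrak{M}>8\pi(1+m_1)=16\pi$). Second, the ansatz for the neutral component is not $\lambda e^{U_\lambda}$ plus a small corrector; it is $N/\lambda = e^{U_\lambda + u_0 + \varphi}+S$ with the perturbation $\varphi$ inside the exponential. This modification relative to Section \ref{sec4} is deliberate: since $e^{U_\lambda}=O(\lambda^2)$ rather than $O(1)$ near $p_1$, the $\varphi^2$ errors would otherwise be uncontrollable. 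This amplification by $\lambda^2$ in $e^{U_\lambda}$, propagated through $\|\Delta e^{U_\lambda+u_0+\varphi}\|_{L^2}=O(\lambda^3)$ and the estimates \eqref{5estimateg2}--\eqref{5estimateg1}, is the actual source of the $(\ln\lambda)^5\lambda^5\ll\mu$ hypothesis, not a cascade of ``weighted inverse losses'' as you guess.
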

 \begin{remark}
If we consider the blow up solutions at the vortex point with the multiplicity one,  and assume that the maximum of the first component has a finite lower bound, then the limit equation becomes the  Chern-Simons equation containing the vortex point with the multiplicity one.  In this case, the local mass should be greater than $16\pi$, necessarily $4\pi\mathfrak{M}>16\pi$, and thus we need the condition $\mathfrak{M}>4$ in Theorem \ref{5thm}. 
 \end{remark}

We note that the    conditions for $\lambda,$ $\mu>0$ in Theorem \ref{thm1}-\ref{5thm} is stronger than  the condition $\lambda\ll\mu$ in Theorem   \ref{BrezisMerletypealternatives} because of technical reason, which occurs from the lower bound of $u_{\lambda,\mu}$.
The maximum of the first component for solutions in Theorem \ref{thm1} and Theorem \ref{5thm} has a  finite lower bound since the profile of approximate solutions comes from the entire solution of (CS) model.   In forthcoming paper,
we will study the  blow up solutions whose  first component has no  lower bound for the maximum value such that the limiting profile will be the Liouville equation.

The paper is organized as follows. In Section 2, we review some preliminaries in the gauge theory.
In Section 3, we analyze the asymptotic behavior of solutions and prove Theorem   \ref{BrezisMerletypealternatives}. In Section 4-5,  we study  the existence  of blow up solutions.

\section{Preliminaries}\label{sec2}
In this section, we review  some known results in the gauge theory. Firstly, we consider the following  problem
\begin{equation}
\begin{aligned} \label{limitingpro}
\Delta w + e^w(1-e^w)=4\pi m\delta_0\ \ \textrm{in}\ \mathbb{R}^2.
\end{aligned}
\end{equation}
We recall the following results.
\begin{lemma}\label{lemma2.1}
\cite{BM,CL} \cite[Lemma 3.2]{CK} Let $m$ be a nonnegative integer, and $w$ be a  solution of \eqref{limitingpro}.\\
If $e^w(1-e^w) \in L^1(\mathbb{R}^2)$, then either

(i) $w(x)\to0$ as $|x|\to\infty$, or

(ii) $w(x)=-\beta\ln |x| + O(1)$ near $\infty$, where
$\beta=-2m+{\frac{1}{2\pi}}\int_{\mathbb{R}^2} e^w(1-e^w)dx.$

\noindent
Assume that $w$ satisfies the boundary condition (ii). Then we have
\[\int_{\mathbb{R}^2}e^{2w} dx = \pi (\beta^2-4\beta-4m^2-8m), \  \textrm{and}\
\int_{\mathbb{R}^2}e^{w} dx = \pi (\beta^2-2\beta-4m^2-4m).\]
In particular,
$\int_{\mathbb{R}^2} e^w(1 -e^w)dx >8\pi(1 +m).$
\end{lemma}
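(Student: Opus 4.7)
The plan is to establish the lemma in three stages: the asymptotic dichotomy via a Newtonian-potential / Liouville argument, the integral identities via a Pohozaev calculation, and finally the strict lower bound as a simple corollary.

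For the dichotomy, since $\Delta(2m\log|x|) = 4\pi m\delta_0$, I would write $w = 2m\log|x| + \tilde w$ with $\tilde w\in C^2_{\mathrm{loc}}(\mathbb{R}^2)$, and introduce the truncated Newtonian potential
\[
V(x) = -\frac{1}{2\pi}\int_{\mathbb{R}^2}\log\frac{|x-y|}{|y|+1}\,e^{w(y)}\bigl(1-e^{w(y)}\bigr)\,dy,
\]
which converges thanks to the $L^1$ assumption. A direct computation gives $-\Delta V = e^w(1-e^w)$ and the far-field behavior
\[
V(x) = -\Bigl(\tfrac{1}{2\pi}\!\int_{\mathbb{R}^2}\!e^w(1-e^w)\,dy\Bigr)\log|x| + o(\log|x|), \qquad |x|\to\infty.
\]
Then $h := w - V - 2m\log|x|$ is harmonic on $\mathbb{R}^2$; a standard gradient-estimate-plus-integrability argument shows $h$ is constant. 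Setting $\beta = -2m + \frac{1}{2\pi}\int e^w(1-e^w)$, this gives $w(x) = -\beta\log|x| + O(1)$ when $\beta\neq 0$, which is case (ii). When $\beta = 0$, $w$ tends to a finite constant at infinity, and feeding this back into the equation with the integrability $e^w(1-e^w)\in L^1$ forces the constant to be $0$, giving case (i).

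For the integral identities in case (ii), I would apply the Pohozaev identity: multiply the equation by $x\cdot\nabla w$ and integrate on the annulus $B_R\setminus B_\varepsilon$. On the left, standard divergence manipulation gives
\[
\int_{B_R\setminus B_\varepsilon}(x\cdot\nabla w)\Delta w\,dx = \int_{\partial B_R}\!\Bigl[(\partial_\nu w)(x\cdot\nabla w) - \tfrac{|x|}{2}|\nabla w|^2\Bigr]d\sigma - (\text{same on }\partial B_\varepsilon).
\]
On the right, writing $F(w) = e^w - \tfrac12 e^{2w}$ so that $F'(w) = e^w(1-e^w)$, integration by parts yields
\[
\int_{B_R\setminus B_\varepsilon}(x\cdot\nabla w)\,F'(w)\,dx = -2\int_{B_R\setminus B_\varepsilon}F(w)\,dx + \int_{\partial B_R}F(w)(x\cdot\nu)\,d\sigma - \int_{\partial B_\varepsilon}F(w)(x\cdot\nu)\,d\sigma.
\]
Using the asymptotics $w = -\beta\log|x| + O(1)$ as $|x|\to\infty$ and $w = 2m\log|x| + O(1)$ as $|x|\to 0$, each boundary integral is evaluated explicitly in the limit $R\to\infty$, $\varepsilon\to 0$; the surviving relation reads $2\int_{\mathbb{R}^2}F(w)\,dx = \pi(\beta^2 - 4m^2)$. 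Combining with the immediate consequence of the equation, $\int_{\mathbb{R}^2}e^w(1-e^w)\,dx = 2\pi(\beta + 2m)$, produces a linear system for $\int e^w$ and $\int e^{2w}$ whose solution yields the stated closed-form expressions.

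Finally, the positivity $\int e^{2w} = \pi(\beta^2 - 4\beta - 4m^2 - 8m) > 0$ forces $\beta > 2m + 4$ by the quadratic formula, so $\int e^w(1-e^w) = 2\pi(\beta + 2m) > 8\pi(1+m)$. The main obstacle I anticipate is the bookkeeping near $|x|=\varepsilon$ in the Pohozaev identity: the logarithmic singularity of order $2m$ at the origin produces boundary contributions of the same order as the nonlinear integrals, so the coefficients must be tracked with care to extract the correct quadratic combination of $\beta$ and $m$ in the limit.
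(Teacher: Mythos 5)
The paper does not give a proof of this lemma: it is recalled as a preliminary from \cite{BM,CL} and \cite[Lemma~3.2]{CK}, so there is no in-paper argument to compare against. On its own terms your outline reproduces the standard proof in those sources and is correct in all its coefficients: the decomposition $w=2m\log|x|+V+h$ with $V$ the regularized Newtonian potential of $e^w(1-e^w)$ and $h$ harmonic (hence constant once a growth bound is supplied) yields the dichotomy and the value of $\beta$; integrating the equation gives the flux identity $\int_{\mathbb{R}^2}e^w(1-e^w)\,dx=2\pi(\beta+2m)$; the Pohozaev identity on $B_R\setminus B_\varepsilon$ with $F(w)=e^w-\tfrac12e^{2w}$ gives $2\int_{\mathbb{R}^2}F(w)\,dx=\pi(\beta^2-4m^2)$ after evaluating the boundary terms (the $\partial B_R$ term contributes $\pi\beta^2$, the $\partial B_\varepsilon$ term $4\pi m^2$, and the $F(w)$ flux terms vanish); solving the $2\times2$ system yields exactly $\int e^w=\pi(\beta^2-2\beta-4m^2-4m)$ and $\int e^{2w}=\pi(\beta^2-4\beta-4m^2-8m)$; and $\int e^{2w}>0$ forces $\beta>2m+4$, so $\int e^w(1-e^w)=2\pi(\beta+2m)>8\pi(1+m)$. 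The two places you compress and should not: first, the Liouville step needs an explicit sublogarithmic growth bound on $h=w-V-2m\log|x|$, not merely ``integrability''; second, the passage from the $o(\log|x|)$ remainder in $V$ to the $O(1)$ remainder of case (ii), and to the gradient estimate $\nabla w=-\beta x/|x|^2+o(1/|x|)$ that your $\partial B_R$ boundary computation tacitly uses, is a genuine bootstrap — one first gets $w/\log|x|\to-\beta$, notes that integrability of $e^w(1-e^w)$ near infinity already forces $\beta>2$, and then feeds the resulting polynomial decay of the source back into the potential representation. With those two points made explicit, the argument is complete.
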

Next we introduce the following result, which will help us to study the asymptotic behavior of solutions in $\Omega$.
\begin{lemma}\cite[Theorem 2.1]{CFL} \cite[Theorem 3.2]{CHMY}  \cite[Theorem 2.2]{SY}\label{propertyofentiresolution}
Let $m=0$, and $w$ be a  solution of \eqref{limitingpro} with $e^w(1-e^w)\in L^1(\mathbb{R}^2)$. Then, $w(x)$ is smooth,  radially symmetric with respect to some point $x_0$ in $\mathbb{R}^2$, and strictly decreasing function of $r=|x-x_0|$.

Assume $w(r;s)$ be the radially symmetric  solution with respect to $0$ of \eqref{limitingpro} such that \[\lim_{r\to0}w(r; s)=s,\ \  \textrm{and}\ \  \lim_{r\to0}w'(r; s)=0,\]
where $w'$ denotes $\frac{dw}{dr}(r; s)$, and let us set
\begin{equation}\label{defbetas}
\beta(s)\equiv\frac{1}{2\pi}\int_{\mathbb{R}^2} e^{w(r; s)}(1-e^{w(r; s)})dx=\int^\infty_0 e^{w(r; s)}(1-e^{w(r; s)})rdr.
\end{equation}Then one has

(i) $\beta(0)=0$ and $w(\cdot;0)\equiv0$;

(ii) $\beta: (-\infty, 0)\rightarrow(4,+\infty)$ is strictly increasing, bijective, and
$$\lim_{s\to-\infty}\beta(s)=4, \textrm{ and }\lim_{s\to 0_-}\beta(s)=+\infty.$$
\end{lemma}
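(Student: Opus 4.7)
The plan is to reduce to a singular ODE by establishing radial symmetry, then to carry out an energy/Pohozaev analysis in the initial height $s=w(0)$. Under the $L^1$ hypothesis and $m=0$, Lemma~\ref{lemma2.1} forces either $w\to 0$ at infinity or $w(x)=-\beta\ln|x|+O(1)$ with $\beta>0$. In the first case a direct maximum principle argument yields $w\equiv 0$; in the second case the polynomial decay is enough to run a Chen--Li type moving plane method on $\Delta w+f(w)=0$ with $f(w)=e^w(1-e^w)$, producing radial symmetry and strict radial monotonicity about some $x_0\in\R^2$. Translating $x_0$ to the origin, $w(r)$ solves $w''+r^{-1}w'+e^w(1-e^w)=0$ with $w(0)=s$, $w'(0)=0$; local uniqueness of this singular Cauchy problem together with the trivial solution $w\equiv 0$ proves (i).

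For $s<0$, the identity $(rw')'=-re^w(1-e^w)\le 0$ --- valid because $w$ stays nonpositive and hence $e^w(1-e^w)\ge 0$ --- forces $w'\le 0$ and $w\le s<0$ for all $r\ge 0$, so the solution is global. A growth argument rules out $w$ bounded below (otherwise $(rw')'\le -cr$ eventually forces $w\lesssim -cr^2$, a contradiction), so $w(r)\to -\infty$; integrating $(rw')'$ over $(0,\infty)$ yields $rw'(r)\to -\beta(s)$ and $w(r)=-\beta(s)\ln r+O(1)$, consistent with \eqref{defbetas}. Testing the PDE against $x\cdot\nabla w$ on $B_R$ and sending $R\to\infty$, using $w'(R)\sim -\beta(s)/R$ and $G(w):=e^w-\tfrac12 e^{2w}=O(r^{-\beta(s)})$, I obtain the Pohozaev identity
\begin{equation*}
\int_0^\infty r\,G(w)\,dr=\frac{\beta(s)^2}{4}.
\end{equation*}
Combined with the $L^1$ formulas of Lemma~\ref{lemma2.1}, the strict positivity $\int_{\R^2}e^{2w}=\pi(\beta^2-4\beta)>0$ yields $\beta(s)>4$.

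For (ii), continuity of $\beta$ follows from smooth dependence of the ODE on $s$ together with uniform tail control coming from the algebraic decay $e^w\sim r^{-\beta(s)}$. The limit $\beta(s)\to 4$ as $s\to-\infty$ is then read off from $\int_{\R^2}e^{2w}=\pi(\beta^2-4\beta)$: the solution becomes uniformly very negative, so $e^{2w}\to 0$ in $L^1$ and $\beta(\beta-4)\to 0$. The opposite limit $\beta(s)\to+\infty$ as $s\to 0^-$ arises from linearizing at the trivial solution: the linearized operator is $-\Delta+1$, whose radial fundamental solution is the modified Bessel function $I_0$, so for $|s|$ small $w(\cdot;s)\approx s\,I_0(r)$ stays close to $0$ out to a radius $R(s)\sim\ln(1/|s|)\to\infty$, pushing the support of $re^w(1-e^w)$ to infinity and forcing the integral defining $\beta(s)$ to diverge.

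The main obstacle is strict monotonicity of $\beta$ on $(-\infty,0)$. Because $f'(w)=e^w-2e^{2w}$ changes sign at $w=-\ln 2$, the linearization $v''+r^{-1}v'+f'(w)v=0$ with $v(0)=1$, $v'(0)=0$ is not sign-preserving and naive ODE comparison between different initial heights fails. The route I plan is to differentiate the Pohozaev identity in $s$, producing $\tfrac12\beta(s)\beta'(s)=\int_0^\infty r\,G'(w)\,v\,dr$, and then to pin down the sign of the right-hand side via the Wronskian of $v$ with $\partial_r w$ combined with the asymptotics $rv'(r)\to -\beta'(s)$. Together with the boundary values computed above, this establishes the claimed bijection $\beta:(-\infty,0)\to(4,+\infty)$.
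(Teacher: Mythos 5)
This lemma is cited in the paper directly from \cite[Theorem 2.1]{CFL}, \cite[Theorem 3.2]{CHMY}, and \cite[Theorem 2.2]{SY}; the paper does not supply its own proof, so there is no internal argument to compare against. Your sketch for radial symmetry, the ODE reduction, the Pohozaev identity $\int_0^\infty rG(w)\,dr=\beta^2/4$, and the conclusion $\beta>4$ from $\int_{\mathbb R^2}e^{2w}=\pi\beta(\beta-4)>0$ is all on solid ground. However, there is a genuine gap in the part you yourself flag as the main obstacle, namely strict monotonicity of $\beta$.

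The proposed route---differentiate the Pohozaev identity in $s$ and control the sign via ``the Wronskian of $v$ with $\partial_r w$''---does not work as a standard Wronskian argument. The function $v=\partial_s w$ solves the radial ($k=0$) linearization $v''+r^{-1}v'+f'(w)v=0$, but $\partial_r w=w'$ solves $w'''+r^{-1}w''-r^{-2}w'+f'(w)w'=0$, i.e.\ the $k=1$ linearization with the extra $-r^{-2}$ potential. Because the two equations differ, the Wronskian is not conserved up to the usual $1/r$ factor. Working out the actual identity, the quantity $W(r)=r(w'v'-w''v)$ satisfies $W'(r)=-w'(r)v(r)/r$, with $W(0^+)=0$ and $W(\infty)=0$ (from $w'\sim-\beta/r$, $v\sim-\beta'\ln r$). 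Since $w'<0$ for $r>0$, integrating gives $\int_0^\infty r^{-1}|w'|\,v\,dr=0$, which forces $v$ to \emph{change sign}. This rules out any sign-definite argument of the type you sketch and shows the monotonicity claim is substantially harder than a Wronskian comparison. The cited references establish it by more delicate means (a uniqueness/shooting analysis rather than a local differentiation), so this step of the proposal is not complete.

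A smaller issue: in the limit $s\to-\infty$ you assert ``$e^{2w}\to0$ in $L^1$'' from the solution becoming uniformly very negative. Since the domain $\mathbb R^2$ is unbounded this is not immediate (the mass spreads out as $s\to-\infty$). The clean fix is the ratio inequality $\int e^{2w}\le e^{s}\int e^{w}$, which by the two formulas of Lemma~\ref{lemma2.1} gives $(\beta-4)/(\beta-2)\le e^{s}$ and hence $\beta\to4$; no boundedness of $\int e^w$ is needed. The $s\to0^-$ limit via $w\approx sI_0(r)$ is a reasonable heuristic but would also need to be rigorized via continuous dependence plus a tail estimate before the argument is complete.
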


\begin{lemma}(Lemma 2.1, \cite{RT})\label{lemma_RT} Let $(u_{\lambda,\mu},N_{\lambda,\mu})$ be solutions of \eqref{eq1} over $\Omega$. Then \[u_{\lambda,\mu}(x)<0, \quad 0<N_{\lambda,\mu}(x)<\lambda\ \ \ \ \textrm{for any}\ \  \ x\in\Omega.\]
\end{lemma}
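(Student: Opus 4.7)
The lemma asserts a pointwise a priori estimate on the flat torus $\Omega$, and the natural tool is the strong maximum principle. My plan is to show $N>0$ directly, then $u<0$ by comparing the maxima of $u$ and $N$, and finally $N<\lambda$ via a cooperative identity for the auxiliary quantity $\Phi:=\lambda-N$.

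For $N>0$: rewrite the second equation of \eqref{eq1} in the form
\[
\bigl(\Delta - \mu(\mu+\lambda e^u)\bigr)N \;=\; -\lambda\mu(\mu+\lambda)\,e^u \;\le\; 0,
\]
with nonpositive zeroth-order coefficient and non-identically-zero right-hand side (since $e^u>0$ away from the vortex points). The strong minimum principle on the boundaryless torus $\Omega$ then rules out a nonpositive interior minimum, giving $N>0$.

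For $u<0$, argue by contradiction: suppose $u$ attains its maximum at some $x_0\notin\{p_i\}$ with $u(x_0)\ge 0$. From $\Delta u(x_0)\le 0$ and the first equation of \eqref{eq1} one gets $N(x_0)\ge \lambda e^{u(x_0)}$, and the strong maximum principle applied to $u$ in a neighborhood of $x_0$ upgrades this to the equality $N(x_0)=\lambda e^{u(x_0)}$ (a strict inequality would give $\Delta u<0$ on a neighborhood, forcing $u$ to be locally constant, contradiction). Now let $z_0$ maximize $N$. Rearranging $\Delta N(z_0)\le 0$ in the second equation of \eqref{eq1} yields
\[
\frac{N(z_0)}{\lambda e^{u(z_0)}} \;\le\; \frac{\mu+\lambda}{\mu+\lambda e^{u(z_0)}},
\]
whose right-hand side is $<1$ when $u(z_0)>0$ and $\le 1$ when $u(z_0)\le 0$ (the map $t\mapsto (\mu+\lambda)t/(\mu+\lambda t)$ is increasing with value $1$ at $t=1$). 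When $u(z_0)>0$, this gives $N(z_0)<\lambda e^{u(z_0)}\le \lambda e^{u(x_0)}=N(x_0)\le N(z_0)$, a contradiction. When $u(z_0)\le 0$, we get $N(z_0)\le \lambda$; combining with $N(z_0)\ge N(x_0)\ge \lambda$ forces $u(x_0)=0$ and $N(x_0)=\lambda$. In this borderline case $u\le 0$ on $\Omega$, so $\eta:=1-e^u\ge 0$, and the cooperative identity
\[
\bigl(\Delta - \mu(\mu+\lambda e^u)\bigr)\Phi \;=\; -\lambda\mu^2\,\eta
\]
(derived from \eqref{eq1} by direct calculation) combined with the strong minimum principle yields $\Phi>0$, contradicting $\Phi(x_0)=\lambda-N(x_0)=0$.

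Finally, once $u<0$ is established we have $\eta>0$, and the same cooperative identity gives $\Phi>0$ (i.e.\ $N<\lambda$) by the strong minimum principle; the remaining possibility $\Phi\equiv$ const is excluded because the identity would then force $e^u$ to be constant, incompatible with $u\to-\infty$ at $p_i$. I expect the principal technical obstacle to be the extremum-comparison step, and in particular the borderline configuration $u(x_0)=0$, where the single-equation maximum principle is insufficient and the cooperative identity linking $\Phi$ and $\eta$ must be invoked to close the loop.
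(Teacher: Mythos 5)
The paper does not reproduce a proof of this lemma; it simply cites Lemma~2.1 of \cite{RT}, so there is no in-text proof to compare against. Your maximum-principle strategy (compare the extrema of $u$ and $N$, then exploit the cooperative structure via $\Phi=\lambda-N$) is a natural route, and the cooperative identity $(\Delta-\mu(\mu+\lambda e^u))\Phi=-\lambda\mu^2(1-e^u)$ is correct as I verified by direct computation.

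There is, however, one step whose justification fails: the ``upgrade to equality'' $N(x_0)=\lambda e^{u(x_0)}$. You argue that a strict inequality would give $\Delta u<0$ near $x_0$, ``forcing $u$ to be locally constant''; but strict superharmonicity is perfectly compatible with an interior maximum (consider $u=-|x-x_0|^2$), and the strong maximum principle constrains \emph{subharmonic} functions at interior maxima, not superharmonic ones, so no constancy and no contradiction follows. Fortunately you never actually need the equality. The inequality $N(x_0)\ge\lambda e^{u(x_0)}$ coming from $\Delta u(x_0)\le 0$ already gives $N(z_0)<\lambda e^{u(z_0)}\le\lambda e^{u(x_0)}\le N(x_0)\le N(z_0)$ when $u(z_0)>0$, and in the borderline case it still forces $N(x_0)\ge\lambda$ so that $u(x_0)=0$, $N(x_0)=\lambda$, after which your $\Phi$-argument closes the contradiction. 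Deleting the spurious equality claim and its justification leaves a correct proof. A small presentational slip: the assertion that the ``right-hand side is $\le 1$ when $u(z_0)\le 0$'' refers to the reformulated bound $N(z_0)/\lambda\le (\mu+\lambda)t/(\mu+\lambda t)$ with $t=e^{u(z_0)}$, not to the displayed quantity $(\mu+\lambda)/(\mu+\lambda e^{u(z_0)})$, which is in fact $\ge 1$ there.
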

In view of Lemma \ref{lemma_RT}, we can show that the nonlinear term of the first equation in \eqref{main_eq}  is uniformly bounded in $L^1(\Omega)$ with respect to $\lambda,\mu>0$ as  in the following corollary.
\begin{corollary}\label{cor1}Let $(u_{\lambda,\mu},N_{\lambda,\mu})$ satisfy  \eqref{eq1} over $\Omega$. Then we have
\[\int_{\Omega}\lambda^2 e^{u_{\lambda,\mu}}\left(1-\frac{N_{\lambda,\mu}}{\lambda}\right)dx=\int_{\Omega}\lambda^2 e^{u_{\lambda,\mu}}\left|1-\frac{N_{\lambda,\mu}}{\lambda}\right|dx=4\pi \mathfrak{M}.\]\end{corollary}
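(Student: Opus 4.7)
The plan is to integrate the first equation of the equivalent system \eqref{main_eq} over the periodic cell $\Omega$ and exploit the periodic boundary conditions together with the sign information from Lemma \ref{lemma_RT}.

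First I would write the first equation of \eqref{main_eq} as
\[
\Delta\!\left(u_{\lambda,\mu}+\frac{N_{\lambda,\mu}}{\mu}\right)=-\lambda^{2}e^{u_{\lambda,\mu}}\!\left(1-\frac{N_{\lambda,\mu}}{\lambda}\right)+4\pi\sum_{i=1}^{n}m_{i}\delta_{p_{i}}
\]
in the distributional sense on $\Omega$. By the 't Hooft periodic boundary conditions \eqref{1.2}, both $u_{\lambda,\mu}=\ln|\phi|^{2}$ and $N_{\lambda,\mu}=2\mathfrak{n}$ (as well as their gradients) are periodic with respect to the lattice generated by $\mathbf{a}_{1},\mathbf{a}_{2}$. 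Hence the integral of $\Delta\bigl(u_{\lambda,\mu}+N_{\lambda,\mu}/\mu\bigr)$ over the fundamental cell $\Omega$ reduces, via the divergence theorem, to a boundary flux that cancels pairwise on opposite sides of $\Omega$ and therefore vanishes. Integrating the right-hand side and recalling $\mathfrak{M}=\sum_{i=1}^{n}m_{i}$ yields
\[
0=-\int_{\Omega}\lambda^{2}e^{u_{\lambda,\mu}}\!\left(1-\frac{N_{\lambda,\mu}}{\lambda}\right)dx+4\pi\mathfrak{M},
\]
which is the first equality.

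For the second equality I would invoke Lemma \ref{lemma_RT}, which gives the pointwise bound $0<N_{\lambda,\mu}(x)<\lambda$ for every $x\in\Omega$. Consequently $1-N_{\lambda,\mu}/\lambda>0$ on $\Omega$, so the integrand is nonnegative and the absolute value may be inserted without changing the value of the integral. This closes the argument.

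There is no real obstacle: the only subtlety is observing that the Laplacian term integrates to zero on the torus by the periodicity built into \eqref{1.2}, and that the sign of $1-N_{\lambda,\mu}/\lambda$ is precisely the content of Lemma \ref{lemma_RT}. Both ingredients have already been established, so the corollary follows in a few lines.
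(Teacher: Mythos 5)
Your argument is correct and is essentially the paper's proof, which simply says ``integrate \eqref{main_eq} over $\Omega$ and use Lemma \ref{lemma_RT}''; you supply exactly the details that are left implicit, namely that the Laplacian term pairs to zero against the constant $1$ on the torus (equivalently, the boundary flux cancels and the singular parts are accounted for by the Dirac masses) and that $0<N_{\lambda,\mu}<\lambda$ gives the positivity of $1-N_{\lambda,\mu}/\lambda$ needed for the second equality.
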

\begin{proof}By integrating \eqref{main_eq} over $\Omega$ and using Lemma \ref{lemma_RT}, we can obtain  Corollary \ref{cor1}.

\end{proof}

\begin{remark}\label{rem1}In view of  Corollary \ref{cor1}, we obtain   \begin{equation*}\lim_{\lambda\to\infty}\left[\int_{\Omega} e^{u_{\lambda,\mu}}\left|1-\frac{N_{\lambda,\mu}}{\lambda}\right|dx\right]=0,\end{equation*} which implies  \begin{equation}\label{limint}\textrm{either}\  \ u_{\lambda,\mu}\to-\infty \  \  \textrm{or}\  \  \frac{N_{\lambda,\mu}}{\lambda}\to1\  \  \textrm{a.e. in}\  \Omega\  \  \textrm{as}\  \  \lambda\to\infty.\end{equation}
Moreover, by integrating the second equation of \eqref{eq1} on $\Omega$, we also see that  \begin{equation}\label{limint1}\int_{\Omega} \left(1+\frac{\lambda}{\mu}e^{u_{\lambda,\mu}}\right)\frac{N_{\lambda,\mu}}{\lambda}dx=\int_{\Omega}\left(1+\frac{\lambda}{\mu}\right)e^{u_{\lambda,\mu}}dx.\end{equation}If $\mu>\lambda$, then it is reasonable to consider the class of solutions satisfying the asymptotic behavior in \eqref{def_sol}.
\end{remark}
Let us also recall the following form of the Harnack inequality.
\begin{lemma}(\cite{BT,GT})\label{harnarkineq}
Let $D\subseteq\mathbb{R}^2$ be a smooth bounded domain and $v$ satisfy:
$$-\Delta v=f\ \textrm{in}\ D,$$
with $f\in L^p(D)$, $p>1$. For any subdomain $D'\subset\subset D$, there exist two positive constants $\sigma\in(0,1)$ and $\tau>0$,
depending on $D'$ only such that:

$$(i)\ \textrm{if}\ \sup_{\partial D} v\le C,\ \textrm{then}\ \sup_{D'} v\le\sigma\inf_{D'}v+(1+\sigma)\tau\|f\|_{L^p}+(1-\sigma)C,$$

$$(ii)\ \textrm{if}\ \inf_{\partial D} v\ge -C,\ \textrm{then}\ \sigma\sup_{D'} v\le\inf_{D'}v+(1+\sigma)\tau\|f\|_{L^p}+(1-\sigma)C.$$
\end{lemma}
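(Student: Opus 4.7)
The plan is to reduce the statement to two classical ingredients: the $L^p$ Dirichlet estimate for the Poisson equation on $D$ and the scale-invariant Harnack inequality for nonnegative harmonic functions on $D'\Subset D$. I would begin by writing $v=v_1+v_2$, where $v_1$ solves $-\Delta v_1=f$ in $D$ with $v_1=0$ on $\partial D$, so that $v_2:=v-v_1$ is harmonic in $D$. This splitting isolates the inhomogeneity $f$ into a single term controlled purely by $\|f\|_{L^p}$, while the harmonic residue $v_2$ carries the boundary data $v|_{\partial D}$.

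For the estimate on $v_1$, standard $L^p$ elliptic regularity yields $v_1\in W^{2,p}(D)$ with $\|v_1\|_{W^{2,p}(D)}\le C(D,p)\|f\|_{L^p(D)}$, and since $p>1$ in dimension two the Morrey embedding gives $W^{2,p}(D)\hookrightarrow C^0(\overline D)$; therefore there exists $\tau>0$ depending only on $D$ (and hence on $D'$) such that $\|v_1\|_{L^\infty(D)}\le\tau\|f\|_{L^p(D)}$. For part (i), the hypothesis $\sup_{\partial D} v\le C$ combined with $v_1|_{\partial D}=0$ gives $v_2\le C$ on $\partial D$, so by the maximum principle $C-v_2$ is a nonnegative harmonic function on $D$; the classical Harnack inequality on $D'$ then provides a constant $c=c(D,D')>1$ with $\sup_{D'}(C-v_2)\le c\,\inf_{D'}(C-v_2)$. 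Setting $\sigma:=1/c\in(0,1)$ and rearranging yields
\[
\sup_{D'}v_2\le\sigma\inf_{D'}v_2+(1-\sigma)C,
\]
which, combined with the elementary bounds $\sup_{D'} v\le\sup_{D'}v_2+\|v_1\|_\infty$ and $\inf_{D'}v_2\le\inf_{D'}v+\|v_1\|_\infty$ together with the $L^\infty$ estimate on $v_1$, produces exactly the inequality in (i). Part (ii) is symmetric: the hypothesis gives $v_2+C\ge 0$ on $\partial D$, hence on $D$, and applying Harnack to the nonnegative harmonic function $v_2+C$ yields $\sigma\sup_{D'}v_2\le\inf_{D'}v_2+(1-\sigma)C$, from which the stated inequality follows upon combining with the same $v_1$ bound.

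I do not expect a conceptual obstacle, as all the tools are standard. The only care required is bookkeeping of constants, so that the coefficient of $\|f\|_{L^p}$ ends up precisely as $(1+\sigma)\tau$ rather than $2\tau$ or $(1+1/\sigma)\tau$; this reflects the fact that $\|v_1\|_\infty$ is picked up once with weight $1$ (when passing from $\sup v$ to $\sup v_2$) and once with weight $\sigma$ (when passing from $\inf v_2$ to $\inf v$ after the Harnack step). A secondary point to verify is that the dependence of $\tau$ and $\sigma$ can be arranged to depend on $D'$ alone by fixing once and for all a reference annulus between $D'$ and $\partial D$, which is routine.
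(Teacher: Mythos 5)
Your proposal is correct, and it reproduces the standard argument: the paper itself offers no proof of this lemma (it is quoted from Bartolucci--Tarantello and Gilbarg--Trudinger), and the proof in those sources is precisely your decomposition $v=v_1+v_2$, the $W^{2,p}\hookrightarrow C^0$ bound on the Dirichlet part, and Harnack applied to $C-v_2$ (resp.\ $v_2+C$). Your constant bookkeeping giving $(1+\sigma)\tau$ is also right, the $\|v_1\|_\infty$ term being picked up once with weight $1$ and once with weight $\sigma$ exactly as you describe.
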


\section{Asymptotic behavior of solutions}\label{sec3}
In this section, we will study the asymptotic behavior of solutions to (\ref{main_eq}) and prove Theorem \ref{BrezisMerletypealternatives}. 
We firstly introduce some notations. 
Let  $G(x,y)$ be the Green's function  satisfying
\begin{equation*}
-\Delta_x G(x,y)=\delta_y-\frac{1}{|\Omega|},\quad\int_{\Omega}G(x,y)dy=0,
\end{equation*}
where $|\Omega|$ is the measure of $\Omega$, and we denote the regular part of $G(x,y)$ by \[\gamma(x,y)=G(x,y)+\frac{1}{2\pi}\ln|x-y|.\]
Let $\mathfrak{M}=\sum_{i=1}^n m_{i}$, and
\begin{equation}\label{u_0}
u_0(x)=-4\pi \sum_{i=1}^n m_{i}G(x,p_i).
\end{equation}
We set $u=v+u_0$, and assume $|\Omega|=1$. Then (\ref{main_eq}) is equivalent to
\begin{equation}\label{eq2}
\left\{\begin{array}{l}
\Delta \left( v+\frac{N}{\mu}\right)=-\lambda^2 e^{v+u_0}\left(1-\frac{N}{\lambda}\right) +4\pi \mathfrak{M},\\
\Delta N=\mu (\mu+\lambda e^{v+u_0})N-\lambda \mu(\lambda+\mu)e^{v+u_0}
\end{array}
\right. \mbox{ in }\Omega.
\end{equation}

\begin{lemma}\label{grad_sol} Let $(u_{\lambda,\mu},N_{\lambda,\mu})$ satisfy  \eqref{main_eq} over $\Omega$. Then there exists a constant $C>0$, independent of $\lambda>0$ and $\mu>0$, such that
\[\left\|\nabla\left(u_{\lambda,\mu}-u_0+\frac{N_{\lambda,\mu}}{\mu}\right)\right\|_{L^\infty(\Omega)}\le C\lambda.\]
\end{lemma}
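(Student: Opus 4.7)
Set $w_{\lambda,\mu}=u_{\lambda,\mu}-u_0+\frac{N_{\lambda,\mu}}{\mu}$. Since $u_{\lambda,\mu}-u_0=v_{\lambda,\mu}$ is smooth on the torus $\Omega$ and $N_{\lambda,\mu}$ is smooth, $w_{\lambda,\mu}\in C^2(\Omega)$. The first equation of \eqref{eq2} yields
\begin{equation*}
-\Delta w_{\lambda,\mu}=f_{\lambda,\mu}, \qquad f_{\lambda,\mu}:=\lambda^{2}e^{u_{\lambda,\mu}}\Bigl(1-\tfrac{N_{\lambda,\mu}}{\lambda}\Bigr)-4\pi\mathfrak{M},
\end{equation*}
and by Corollary \ref{cor1}, $\int_{\Omega}f_{\lambda,\mu}=0$, so $w_{\lambda,\mu}$ admits the Green's representation
\begin{equation*}
w_{\lambda,\mu}(x)=\overline{w}_{\lambda,\mu}+\int_{\Omega}G(x,y)f_{\lambda,\mu}(y)\,dy.
\end{equation*}

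Differentiating in $x$ and using the decomposition $G(x,y)=-\frac{1}{2\pi}\ln|x-y|+\gamma(x,y)$ with $\gamma\in C^{1}(\overline{\Omega}\times\overline{\Omega})$, we get
\begin{equation*}
|\nabla w_{\lambda,\mu}(x)|\le C\int_{\Omega}\frac{|f_{\lambda,\mu}(y)|}{|x-y|}\,dy+C\|f_{\lambda,\mu}\|_{L^{1}(\Omega)}.
\end{equation*}
Two ingredients control $f_{\lambda,\mu}$: Lemma \ref{lemma_RT} gives $u_{\lambda,\mu}<0$ and $0<N_{\lambda,\mu}/\lambda<1$, hence $0\le\lambda^{2}e^{u_{\lambda,\mu}}(1-N_{\lambda,\mu}/\lambda)\le\lambda^{2}$, so
\begin{equation*}
\|f_{\lambda,\mu}\|_{L^{\infty}(\Omega)}\le\lambda^{2}+4\pi\mathfrak{M}\le C\lambda^{2},
\end{equation*}
while Corollary \ref{cor1} yields $\|f_{\lambda,\mu}\|_{L^{1}(\Omega)}\le 8\pi\mathfrak{M}$, uniformly in $\lambda,\mu$.

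To finish, I apply the standard splitting of the Riesz-type convolution: for any $r>0$,
\begin{equation*}
\int_{\Omega}\frac{|f_{\lambda,\mu}(y)|}{|x-y|}\,dy\le\|f_{\lambda,\mu}\|_{L^{\infty}}\!\!\int_{B_{r}(x)}\!\!\frac{dy}{|x-y|}+\frac{1}{r}\|f_{\lambda,\mu}\|_{L^{1}}\le 2\pi r\,\|f_{\lambda,\mu}\|_{L^{\infty}}+\frac{1}{r}\|f_{\lambda,\mu}\|_{L^{1}}.
\end{equation*}
Choosing $r=\|f_{\lambda,\mu}\|_{L^{1}}^{1/2}\|f_{\lambda,\mu}\|_{L^{\infty}}^{-1/2}$ optimizes the right-hand side to $C\,\|f_{\lambda,\mu}\|_{L^{1}}^{1/2}\|f_{\lambda,\mu}\|_{L^{\infty}}^{1/2}\le C\lambda$, which together with the bounded $L^{1}$-term gives $\|\nabla w_{\lambda,\mu}\|_{L^{\infty}(\Omega)}\le C\lambda$, as desired.

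There is no real obstacle here; the only point to verify carefully is that no constant in the argument depends on $\mu$, which is clear because every bound used (pointwise bound on $N_{\lambda,\mu}/\lambda$, $L^{1}$ mass of the source, properties of $G$) is $\mu$-independent. The coupling to $N_{\lambda,\mu}$ never enters directly: it is hidden in the factor $(1-N_{\lambda,\mu}/\lambda)\in(0,1)$ and absorbed into $\|f_{\lambda,\mu}\|_{L^{\infty}}\le\lambda^{2}$.
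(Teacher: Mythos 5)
Your proof is correct and follows essentially the same route as the paper: apply the Green's representation to $u_{\lambda,\mu}-u_0+N_{\lambda,\mu}/\mu$, differentiate, and split the singular convolution at radius $r\sim 1/\lambda$, controlling the near region with the $L^\infty$ bound $\lambda^2 e^{u_{\lambda,\mu}}(1-N_{\lambda,\mu}/\lambda)\le\lambda^2$ from Lemma \ref{lemma_RT} and the far region with the $L^1$ bound $4\pi\mathfrak{M}$ from Corollary \ref{cor1}. The only cosmetic difference is that you phrase the split as an optimization over $r$ while the paper fixes $r=1/\lambda$ directly; the resulting estimate is identical.
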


\begin{proof}By applying the Green's representation formula for a solution $(u_{\lambda,\mu},N_{\lambda,\mu})$   of  \eqref{main_eq}, we see
\[\left(u_{\lambda,\mu}-u_0+\frac{N_{\lambda,\mu}}{\mu}\right)(x)-\int_\Omega \left(u_{\lambda,\mu}-u_0+\frac{N_{\lambda,\mu}}{\mu}\right)dy=\int_\Omega \lambda^2e^{u_{\lambda,\mu}(y)}\left(1-\frac{N_{\lambda,\mu}(y)}{\lambda}\right)G(x,y)dy.
\]
Together with  Lemma \ref{lemma_RT} and Corollary \ref{cor1}, we can obtain
\begin{equation}\begin{aligned}
&\left|\nabla_x\left(u_{\lambda,\mu}-u_0+\frac{N_{\lambda,\mu}}{\mu}\right)(x)\right|
\\&\le\left|\int_{B_d(x)}\lambda^2e^{u_{\lambda,\mu}(y)}\left(1-\frac{N_{\lambda,\mu}(y)}{\lambda}\right)\left(-\frac{x-y}{2\pi|x-y|^2}+\nabla \gamma(x,y)\right)dy\right|+c_0\\
&\leq \frac{ \lambda^2\left\|e^{u_{\lambda,\mu}}\left(1-\frac{N_{\lambda,\mu}(y)}{\lambda}\right)\right\|_{L^\infty(\Omega)}}{2\pi}\left[\int_{|x-y|\leq\frac{1}{\lambda}} \frac{1}{|x-y|}dy\right]\\
&\quad+\int_{\frac{1}{\lambda}\le|x-y|\leq d}\frac{\lambda^2e^{u_{\lambda,\mu}(y)}\left|1-\frac{N_{\lambda,\mu}(y)}{\lambda}\right|}{2\pi|x-y|}dy +c_1
 \le  C\lambda,
\end{aligned}\end{equation}
where $c_0,\ c_1,\ C>0$ are constants, independent of $\lambda, \mu>0$.

\end{proof}

Next we will have the key estimate which will reduce \eqref{main_eq} to an  almost decoupled system whose first equation is  a perturbation of a single Chern-Simons equation.

\begin{lemma}\label{lem3.2}Let $(u_{\lambda,\mu},N_{\lambda,\mu})$ satisfy  \eqref{main_eq} over $\Omega$. Then
\[\lim_{\lambda,\mu\to\infty,\ \frac{\lambda}{\mu}\to0}\left\|e^{u_{\lambda,\mu}}-\frac{N_{\lambda,\mu}}{\lambda}\right\|_{L^\infty(\Omega)}=0.\]
\end{lemma}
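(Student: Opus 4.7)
The plan is to reduce Lemma \ref{lem3.2} to a Liouville-type statement for the auxiliary function $\wlm := \frac{\nl}{\lambda} - e^{\ulm}$, after first extracting an elliptic equation for $\wlm$ from the coupled system \eqref{eq1} and observing that $\wlm$ has a sign.

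\textbf{Step 1 (PDE for $\wlm$).} Dividing the second equation of \eqref{eq1} by $\lambda$ gives $(-\Delta+\mu^2)\frac{\nl}{\lambda} = \mu e^{\ulm}(\mu+\lambda-\nl)$. Separately, using $\Delta e^{\ulm} = e^{\ulm}\Delta\ulm + e^{\ulm}|\nabla\ulm|^2$ together with the first equation of \eqref{eq1} (away from the vortex set $Z$), one obtains $(-\Delta+\mu^2)e^{\ulm} = \mu e^{\ulm}(\mu-\lambda e^{\ulm}+\nl) - e^{\ulm}|\nabla\ulm|^2$. Subtracting and substituting $\nl = \lambda(e^{\ulm}+\wlm)$ yields the key identity
\[(-\Delta + \mu^2 + 2\mu\lambda e^{\ulm})\wlm = \mu\lambda e^{\ulm}(1-e^{\ulm}) + e^{\ulm}|\nabla\ulm|^2 \quad\text{in }\Omega,\]
whose right-hand side extends continuously across $Z$ since $e^{\ulm}$ vanishes to order $2m_i$ at each $p_i$ while $|\nabla\ulm|^2$ blows up only like $|x-p_i|^{-2}$.

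\textbf{Step 2 (positivity and contradiction set-up).} The zeroth-order coefficient is bounded below by $\mu^2>0$ and the right-hand side is nonnegative, so the maximum principle forces $\wlm\geq 0$ on $\Omega$; combined with Lemma \ref{lemma_RT} this gives $0\leq\wlm<1$. Suppose, for contradiction, that $\|w_{\lambda_n,\mu_n}\|_{L^\infty(\Omega)}\geq\delta>0$ along a sequence with $\lambda_n,\mu_n\to\infty$, $\lambda_n/\mu_n\to 0$. Pick $x_n\in\Omega$ attaining the maximum and rescale at scale $1/\mu_n$ by $\tilde w_n(y):=w_{\lambda_n,\mu_n}(x_n+y/\mu_n)$ and $\tilde u_n(y):=u_{\lambda_n,\mu_n}(x_n+y/\mu_n)$. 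The rescaled equation reads
\[\bigl(-\Delta_y+1+2(\lambda_n/\mu_n)e^{\tilde u_n}\bigr)\tilde w_n = (\lambda_n/\mu_n)e^{\tilde u_n}(1-e^{\tilde u_n}) + e^{\tilde u_n}|\nabla_y\tilde u_n|^2,\]
with $0\leq\tilde w_n\leq 1$ and $\tilde w_n(0)\geq\delta$.

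\textbf{Step 3 (control of the rescaled right-hand side).} The first summand is bounded by $\lambda_n/\mu_n\to 0$. For the second, Lemma \ref{grad_sol} gives $\nabla\ulm = \nabla u_0 - \nabla\nl/\mu + O(\lambda)$, and applying the convolution representation to $(-\Delta+\mu^2)\nl = \lambda\mu e^{\ulm}(\mu+\lambda-\nl)$ with the Bessel-type Green's function of $-\Delta+\mu^2$ on $\Omega$ (whose gradient satisfies $\|\nabla G_\mu\|_{L^1(\Omega)}=O(1/\mu)$) gives $|\nabla\nl|\leq C\lambda\mu$. Hence $|\nabla\ulm|\leq|\nabla u_0|+C\lambda$, so that $|\nabla_y\tilde u_n(y)|\leq(|\nabla u_0(x_n+y/\mu_n)|+C\lambda_n)/\mu_n$, which tends to $0$ locally uniformly whenever $x_n+y/\mu_n$ stays away from the vortex set. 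In the remaining case $x_n\to p_i\in Z$, the scaled vortex persists at some $y_*\in\mathbb{R}^2$, but the local structure $e^{\tilde u_n}(y)\sim|y-y_*|^{2m_i}/\mu_n^{2m_i}$ and $|\nabla_y\tilde u_n(y)|\sim m_i/|y-y_*|$ yields $e^{\tilde u_n}|\nabla_y\tilde u_n|^2 = O\bigl(|y-y_*|^{2m_i-2}/\mu_n^{2m_i}\bigr)$, still uniform-on-compacts negligible.

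\textbf{Step 4 (limit and Liouville).} Boundedness of $\{\tilde w_n\}$ together with $L^\infty_{\text{loc}}$ control of the coefficients and right-hand side gives, via $W^{2,p}_{\text{loc}}$ elliptic estimates, a subsequential limit $\tilde w_\infty:\mathbb{R}^2\to[0,1]$ in $C^{1,\alpha}_{\text{loc}}$ solving $(-\Delta+1)\tilde w_\infty=0$ in $\mathbb{R}^2\setminus\{y_*\}$; the isolated singularity is removable by boundedness, so the equation holds on all of $\mathbb{R}^2$. A standard argument shows that any bounded solution on $\mathbb{R}^2$ is identically zero: at any approximate maximizer $x_k$ of $\tilde w_\infty$, passing to a local limit and applying $-\Delta\tilde w_\infty(x_*)\geq 0$ gives $\tilde w_\infty(x_*)\leq 0$, hence $\sup\tilde w_\infty\leq 0$; the symmetric argument gives $\inf\tilde w_\infty\geq 0$. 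This contradicts $\tilde w_\infty(0)\geq\delta>0$. The main obstacle is Step 3: ensuring that $e^{\tilde u_n}|\nabla_y\tilde u_n|^2$ is negligible even when the maxima of $\wlm$ cluster at a vortex point, which is resolved by the precise matching between the vanishing order of $e^{\ulm}$ and the logarithmic singularity of $\ulm$, supplemented by Lemma \ref{grad_sol} and the $L^1$-bound on $\nabla G_\mu$.
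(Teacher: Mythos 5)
Your overall strategy is a genuinely different and attractive route: the paper's proof rescales $N_{\lambda,\mu}/\lambda$ at scale $1/\mu$, uses the nondegeneracy of $-\Delta+1$ to show the limit is a constant, and derives a contradiction by a Harnack-type case analysis; you instead form the difference $\wlm=\frac{\nl}{\lambda}-e^{\ulm}$, derive the clean elliptic identity $(-\Delta+\mu^2+2\lambda\mu e^{\ulm})\wlm=\lambda\mu e^{\ulm}(1-e^{\ulm})+e^{\ulm}|\nabla\ulm|^2$, get $\wlm\ge0$ for free from the maximum principle, and reduce to the Liouville property for $-\Delta+1$ after blowing up. The algebra in Step~1 and the sign argument in Step~2 are correct, and the gradient bound $\|\nabla\nl\|_{L^\infty}\le C\lambda\mu$ that you extract from the Bessel kernel of $-\Delta+\mu^2$ is a valid (and useful) estimate that the paper itself does not record.

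However, there is a genuine gap in Step~3 at the point you yourself flag as "the main obstacle". Your claim that $e^{\tilde u_n}(y)\sim|y-y_*|^{2m_i}/\mu_n^{2m_i}$ is not correct: writing $\ulm=2m_i\ln|x-p_i|+v_n$ with $v_n$ smooth, the actual scaling gives
\begin{equation*}
e^{\tilde u_n}(y)=\mu_n^{-2m_i}\,|y-y_{*,n}|^{2m_i}\,e^{v_n(x_n+y/\mu_n)},
\end{equation*}
and the factor $e^{v_n(x_n)}$ is \emph{not} $O(1)$; the only a priori bound coming from $\ulm<0$ at distance $\sim1/\lambda_n$ from $p_i$, combined with $|\nabla v_n|=O(\lambda_n)$, is $e^{v_n(x_n)}\le C\lambda_n^{2m_i}$. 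So the true bound on your rescaled right-hand side is $e^{\tilde u_n}|\nabla_y\tilde u_n|^2\le C\,(\lambda_n/\mu_n)^{2m_i}\,|y-y_{*,n}|^{2m_i-2}$, not $O(|y-y_*|^{2m_i-2}/\mu_n^{2m_i})$. The conclusion that this vanishes locally uniformly is still true, but it rests on the extra two-line argument just described (the pointwise constraint $e^{\ulm}<1$ at scale $1/\lambda_n$ plus the derived gradient bound on $v_n$), which does not appear in your proposal; as written, the claimed smallness does not follow, and hence the passage to the limit equation $(-\Delta+1)\tilde w_\infty=0$ in Step~4 is unjustified precisely in the vortex case you singled out. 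Once that prefactor control is supplied, the Liouville argument in Step~4 closes the proof. (A minor additional point: after the fix the right-hand side tends to $0$ in $L^\infty_{\mathrm{loc}}$ on all of $\mathbb R^2$, so no removable-singularity discussion at $y_*$ is actually needed.)
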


\begin{proof} Let $v_{\lambda,\mu}=u_{\lambda,\mu}-u_0$. Then $v_{\lambda,\mu}$ satisfies \eqref{eq2}.
We argue by   contradiction and suppose that there exists $x_{\lambda,\mu}\in\Omega$ such that
\begin{equation}\label{contra}
\left|e^{v_{\lambda,\mu}(x_{\lambda,\mu})+u_0(x_{\lambda,\mu})}-\frac{N_{\lambda,\mu}(x_{\lambda,\mu})}{\lambda}\right|\geq c>0.
\end{equation}
Let \begin{equation}\label{change_v}
y=\mu^{-1}x+x_{\lambda,\mu},\ {and}\ \tilde{N}_{\lambda,\mu}(x)=\frac{N_{\lambda,\mu}(\mu^{-1}x+x_{\lambda,\mu})}{\lambda}=\frac{N_{\lambda,\mu}(y)}{\lambda}.\end{equation}
Then we see that
\begin{equation}\begin{aligned}\label{eq_N}
 &\Delta_x \tilde{N}_{\lambda,\mu}(x)-(1+\frac{\lambda}{\mu}e^{(v_{\lambda,\mu}+u_0)(\mu^{-1}x+x_{\lambda,\mu})})\tilde{N}_{\lambda,\mu}(x)
 \\&=-\left(1+\frac{\lambda}{\mu}\right)e^{(v_{\lambda,\mu}+u_0+\frac{{N}_{\lambda,\mu}}{\mu})(\mu^{-1}x+x_{\lambda,\mu})}e^{-\frac{{N}_{\lambda,\mu}(\mu^{-1}x+x_{\lambda,\mu})}{\mu}}\\
&=-\left(1+\frac{\lambda}{\mu}\right)\left(1+O\left(\frac{\|{N}_{\lambda,\mu}\|_{L^\infty(\Omega)}}{\mu}\right)\right)e^{(v_{\lambda,\mu}+u_0+\frac{{N}_{\lambda,\mu}}{\mu})(\mu^{-1}x+x_{\lambda,\mu})}\\
&=-(1+o(1))e^{(v_{\lambda,\mu}+u_0+\frac{{N}_{\lambda,\mu}}{\mu})(\mu^{-1}x+x_{\lambda,\mu})}\ \ \textrm{as} \ \ \lambda,\mu\to\infty,\ \frac{\lambda}{\mu}\to0.
\end{aligned}\end{equation} Here, the last equality is obtained from Lemma \ref{lemma_RT}.

Fix a constant $R>0$, independent of $\lambda, \mu>0$.
The mean value theorem and  $\left\|\nabla\left(v_{\lambda,\mu}+\frac{N_{\lambda,\mu}}{\mu}\right)\right\|_{L^\infty(\Omega)}=O(\lambda)$ in Lemma \ref{grad_sol}  yield   some $\theta\in [0,1]$ satisfying
\begin{equation}\begin{aligned}\label{2.4}
&\left(v_{\lambda,\mu}+\frac{N_{\lambda,\mu}}{\mu}\right)(\mu^{-1}x+x_{\lambda,\mu})
\\&=\left(v_{\lambda,\mu}+\frac{N_{\lambda,\mu}}{\mu}\right)(x_{\lambda,\mu})+\nabla_z\left(v_{\lambda,\mu}(z)+\frac{N_{\lambda,\mu}(z)}{\mu}\right)\Big|_{z=\mu^{-1}\theta x+x_{\lambda,\mu}}\cdot (\mu^{-1}x)\\
&=\left(v_{\lambda,\mu}+\frac{N_{\lambda,\mu}}{\mu}\right)(x_{\lambda,\mu})+O\left(\frac{\lambda}{\mu}|x|\right)\\
&=\left(v_{\lambda,\mu}+\frac{N_{\lambda,\mu}}{\mu}\right)(x_{\lambda,\mu})+o(1) \quad \textrm{for} \  |x|\leq R \ \textrm{as}\ \ \lambda,\mu\to\infty,\ \frac{\lambda}{\mu}\to0.
\end{aligned}\end{equation}

We are going to consider the following  cases according to the location of limit point for $x_{\lambda,\mu}$, up to subsequence.

\medskip

\noindent\textbf{Case 1.}
$\lim_{\lambda,\mu\to\infty,\frac{\lambda}{\mu}\to0}x_{\lambda,\mu}=x_0\notin \cup^{n}_{i=1}\{p_i\}$.

\medskip
\noindent
Since $B_d(x_0)\cap\cup^{n}_{i=1}\{p_i\}=\emptyset$ for a sufficiently small constant $d>0$, $u_0$ is smooth in $B_d(x_0)$. Together with \eqref{2.4}, we see that
\[
(v_{\lambda,\mu}+u_0+\frac{{N}_{\lambda,\mu}}{\mu})(\mu^{-1}x+x_{\lambda,\mu}) = (v_{\lambda,\mu}+u_0+\frac{{N}_{\lambda,\mu}}{\mu})(x_{\lambda,\mu})+o(1) \quad \textrm{for }  |x|\leq R,
\]here we used that if $|x|\le R$, then $\mu^{-1}x+x_{\lambda,\mu}\in B_d(x_0)$. \\
In view of Lemma \ref{lemma_RT}, we have  $|\tilde{N}_{\lambda,\mu}|\le1$, and thus there exists a function $N_0$ satisfying
$\tilde{N}_{\lambda,\mu}\to N_0$ in $C^{1}_{\textrm{loc}}(\RN)$ as $\lambda,\mu\to\infty,\ \frac{\lambda}{\mu}\to0$, where $N_0$ is a solution of
\[
\Delta N_0-N_0=-c_0\ \ \textrm{in}\ \ \RN, \  \ \textrm{and}\ \ \|N_0\|_{L^\infty(\RN)}\le1,
\]
and $c_0=\lim_{\lambda,\mu\rightarrow\infty}e^{(v_{\lambda,\mu}+u_0+\frac{{N}_{\lambda,\mu}}{\mu})(x_{\lambda,\mu})}$. Then we have $N_0\equiv c_0$ in $\RN$ (for example, see \cite[Proposition 2.3]{FLL}).\\
We  also note that
\begin{equation}\label{c1}
\lim_{\lambda,\mu\to\infty,\ \frac{\lambda}{\mu}\to0} \frac{N_{\lambda,\mu}(x_{\lambda,\mu})}{\lambda}=\lim_{\lambda,\mu\to\infty,\ \frac{\lambda}{\mu}\to0} \tilde{N}_{\lambda,\mu}(0)= N_0(0)= c_0,  \end{equation} and 
\begin{equation}\label{c2}\begin{aligned}
 c_0 &=\lim_{\lambda,\mu\to\infty,\ \frac{\lambda}{\mu}\to0} e^{(v_{\lambda,\mu}+u_0+\frac{{N}_{\lambda,\mu}}{\mu})(x_{\lambda,\mu})}
=\lim_{\lambda,\mu\to\infty,\ \frac{\lambda}{\mu}\to0} e^{(v_{\lambda,\mu}+u_0)(x_{\lambda,\mu})}\left(1+O\left(\frac{\|N_{\lambda,\mu}\|_{L^\infty(\Omega)}}{\mu}\right)\right)
\\
&=\lim_{\lambda,\mu\to\infty,\ \frac{\lambda}{\mu}\to0} e^{(v_{\lambda,\mu}+u_0)(x_{\lambda,\mu})}\left(1+o(1)\right)= \lim_{\lambda,\mu\to\infty,\ \frac{\lambda}{\mu}\to0} e^{(v_{\lambda,\mu}+u_0)(x_{\lambda,\mu})},
\end{aligned}\end{equation}
here, we used Lemma \ref{lemma_RT} and the assumption $1\ll\lambda\ll\mu$ in the third equality.
However,  \eqref{c1} and \eqref{c2} contradict  the assumption  \eqref{contra}.

\medskip

\noindent\textbf{Case 2.}  $x_{\lambda,\mu}\rightarrow x_0=p_i$ for some $i$.

\medskip\noindent
Define $\hat{u}_0(x)\in C^\infty(B_d(p_i))$ such that
\begin{equation} \hat{u}_0(x)=u_0(x) - 2m_i\ln|x-p_i|.\label{uhat}\end{equation} Together with \eqref{2.4}, we have
\[
(v_{\lambda,\mu}+\hat{u}_0+\frac{{N}_{\lambda,\mu}}{\mu})(\mu^{-1}x+x_{\lambda,\mu}) = (v_{\lambda,\mu}+\hat{u}_0+\frac{{N}_{\lambda,\mu}}{\mu})(x_{\lambda,\mu})+o(1), \quad \textrm{if }  |x|\leq R.
\]

There are two cases according to  the behavior  of $|x_{\lambda,\mu}-p_i|\mu$.

\medskip
\noindent
\textbf{Case 2-(1).}    $\lim_{\lambda,\mu\to\infty,\frac{\lambda}{\mu}\to0}|x_{\lambda,\mu}-p_i|\mu=\infty$.

\medskip
\noindent
In this case, the equation \eqref{eq_N} and \eqref{uhat} imply that  if  $|x|\le R$, then
\begin{equation}\begin{aligned}\label{case2-1}
&\Delta_x\tilde{N}_{\lambda,\mu}(x) - (1+o(1))\tilde{N}_{\lambda,\mu}(x)\\&=-(1+o(1))|\mu^{-1}x+x_{\lambda,\mu}-p_i|^{2m_i}e^{(v_{\lambda,\mu}+\hat{u}_0+\frac{{N}_{\lambda,\mu}}{\mu}(\mu^{-1}x+x_{\lambda,\mu}))}
\\
&=-(1+o(1))\left|\frac{x}{\mu|x_{\lambda,\mu}-p_i|}+\frac{x_{\lambda,\mu}-p_i}{|x_{\lambda,\mu}-p_i|}\right|^{2m_i}|x_{\lambda,\mu}-p_i|^{2m_i}e^{(v_{\lambda,\mu}+\hat{u}_0+\frac{{N}_{\lambda,\mu}}{\mu})(x_{\lambda,\mu})}\\
&=-(1+o(1))e^{(v_{\lambda,\mu}+u_0+\frac{{N}_{\lambda,\mu}}{\mu})(x_{\lambda,\mu})}.
\end{aligned}\end{equation}
Then the same arguments in  {Case 1} implies  a contradiction again.

\medskip
\noindent
\textbf{Case 2-(2).} $\lim_{\lambda,\mu\to\infty,\frac{\lambda}{\mu}\to0}|x_{\lambda,\mu}-p_i|\mu\leq c$ for some constant $c>0$.

\medskip
\noindent
In view of \eqref{eq2}, Lemma \ref{lemma_RT}, and the condition $\lambda,\mu\to\infty,\ \frac{\lambda}{\mu}\to0$,  we  see that
\begin{equation}\begin{aligned}
 \Delta \left(v_{\lambda,\mu}+\frac{N_{\lambda,\mu}}{\mu}\right) &
=-\lambda^2e^{(v_{\lambda,\mu}+u_0+\frac{{N}_{\lambda,\mu}}{\mu})}e^{-\frac{{N}_{\lambda,\mu}}{\mu}}\left(1-\frac{N_{\lambda,\mu}}{\lambda}\right)+ 4\pi \mathfrak{M}
\\&
=-\lambda^2e^{(v_{\lambda,\mu}+u_0+\frac{{N}_{\lambda,\mu}}{\mu})}\left(1+\frac{O(\|N_{\lambda,\mu}\|_{L^\infty(\Omega)}}{\mu}\right)\left(1-\frac{N_{\lambda,\mu}}{\lambda}\right)+ 4\pi \mathfrak{M}\\
&= - (1+o(1))\lambda^2|x-p_i|^{2m_i}e^{(v_{\lambda,\mu}+\hat{u}_0+\frac{{N}_{\lambda,\mu}}{\mu})}\left(1-\frac{N_{\lambda,\mu}}{\lambda}\right)+ 4\pi \mathfrak{M}.
\end{aligned}\end{equation}
Let $$\hat{v}_{\lambda,\mu}(x)=\left(v_{\lambda,\mu}+\frac{N_{\lambda,\mu}}{\mu}\right)(\mu^{-1}x+p_i)-2m_i \ln\mu$$ and $z=\mu^{-1}x+p_i$.\\ Then, we have 
\begin{equation}\label{eqvj}\begin{aligned}
&\Delta_x \hat{v}_{\lambda,\mu}(x) + \lambda^2\mu^{-2}|x|^{2m_i}e^{\hat{v}_{\lambda,\mu}(x)+\hat{u}_0(\mu^{-1}x+p_i)}(1+o(1))\left(1-\frac{N_{\lambda,\mu}(\mu^{-1}x+p_i)}{\lambda}\right)\\&=4\pi\mathfrak{M}\mu^{-2}=o(1) \ \ \textrm{as} \ \ \lambda,\mu\to\infty,\ \frac{\lambda}{\mu}\to0.\end{aligned}
\end{equation}
In view of Lemma \ref{lemma_RT} and $\hat{u}_0\in C^\infty(B_d(p_i))$, we  see that
\begin{equation}\begin{aligned}\label{uppb1}
\hat{v}_{\lambda,\mu}(x)+2m_i\ln|x|
&=(v_{\lambda,\mu}+u_0+\frac{{N}_{\lambda,\mu}}{\mu})(\mu^{-1}x+p_i)-\hat{u}_0(\mu^{-1}x+p_i)
\\&\leq \left\|\frac{{N}_{\lambda,\mu}}{\mu}\right\|_{L^\infty(\Omega)}+\|\hat{u}_0\|_{L^\infty(B_d(p_i))}\le c_0\ \ \ \textrm{in}\ \ B_{d\mu}(0),
\end{aligned}\end{equation}for some constant $c_0>0$, independent of $\lambda, \mu>0$.\\
By Lemma \ref{grad_sol}, we also see that
\begin{equation}\begin{aligned}\label{gradb1}
|\nabla_x \hat{v}_{\lambda,\mu}(x)|
 &=\mu^{-1}\left|\nabla_z\left(v_{\lambda,\mu}+\frac{N_{\lambda,\mu}}{\mu}\right)(z)\Big|_{z=\mu^{-1}x+p_i}\right|\\&=O(\frac{\lambda}{\mu})=o(1)\ \ \ \textrm{in}\ \ B_{d\mu}(0)\ \ \  \textrm{as}\ \ \ \lambda,\mu\to\infty,\ \frac{\lambda}{\mu}\to0.
\end{aligned}\end{equation}
From \eqref{uppb1} and \eqref{gradb1}, we note that there are two possibilities as follows:

\medskip
\noindent
{\bf(i).}  $\sup_{\partial B_1(0)}|\hat{v}_{\lambda,\mu}|\le C$. \\
In this case, \eqref{gradb1} implies that $|\hat{v}_{\lambda,\mu}|$ is uniformly bounded in $C^0_{\textrm{loc}}\left(B_{d\mu}(0)\right)$ for $\lambda,\mu>0$.
Then there exists a function $v_0$ such  that $\hat{v}_{\lambda,\mu}\to v_0$ in $C^{1}_{\textrm{loc}}\left(B_{d\mu}(0)\right)$ and $\nabla v_0\equiv0$ in $\RN$. It implies that $v_0\equiv c$ for some constant $c\in\R$ and  $\Delta v_0=0$ in $\RN$. From the mean value theorem for harmonic function and \eqref{uppb1}, we see that  for any constant $R>0$,
\begin{equation}\label{contras}
\begin{split}
-\infty<c&=v_0(0)= \frac{1}{|\partial B_R(0)|}\int_{\partial B_R(0)}v_0(y) dS_y\\
&\le \frac{1}{|\partial B_R(0)|}\int_{\partial B_R(0)}(c_0-2m_i\ln|y|)dS_y=c_0-2m_i\ln R.
\end{split}
\end{equation}
We get a contradiction as $R\to\infty$ in \eqref{contras}.

\medskip
\noindent
{\bf (ii).}  $\sup_{\partial B_1(0)}\hat{v}_{\lambda,\mu}\rightarrow-\infty$.\\
In this case, \eqref{gradb1} implies that $\hat{v}_{\lambda,\mu}\to-\infty$ is uniformly in $C^0_{\textrm{loc}}\left(B_{d\mu}(0)\right)$ for $\lambda,\mu>0$.
By \eqref{eq_N}, we have
\begin{equation}\begin{aligned}
&\Delta_x\tilde{N}_{\lambda,\mu}(x)- \left(1+\frac{\lambda}{\mu}e^{(v_{\lambda,\mu}+u_0)(\mu^{-1}x+x_{\lambda,\mu})}\right)\tilde{N}_{\lambda,\mu}(x)\\
&=- \left(1+\frac{\lambda}{\mu}\right)e^{(v_{\lambda,\mu}+u_0)(\mu^{-1}x+x_{\lambda,\mu})}
\\
&=- (1+o(1))\left|x+\mu(x_{\lambda,\mu}-p_i)\right|^{2m_i}e^{\hat{v}_{\lambda,\mu}(x+\mu(x_{\lambda,\mu}-p_i))+\hat{u}_0(\mu^{-1}x+x_{\lambda,\mu})}=o(1).
\end{aligned}\end{equation}
Since $|\tilde{N}_{\lambda,\mu}(x)|\le 1$ for all $x\in B_{{d\mu}}(0)$, there is a function $\tilde{N}_0$ satisfying $\tilde{N}_{\lambda,\mu}\to\tilde{N}_0$ in $C^1_{\textrm{loc}}(\RN)$, and  \[\Delta \tilde{N}_0- \tilde{N}_0=0\ \ \ \ \textrm{in}\ \ \ \RN,\ \ \|\tilde{N}_0\|_{L^\infty(\RN)}\le1,\] which implies $\tilde{N}_0\equiv0 $ in $\RN$ (for example, see \cite[Proposition 2.3]{FLL}).
We note that \begin{equation}\label{N}\lim_{\lambda,\mu\to\infty,\ \frac{\lambda}{\mu}\to0}\left(\frac{N_{\lambda,\mu}(x_{\lambda,\mu})}{\lambda}\right)=\lim_{\lambda,\mu\to\infty,\ \frac{\lambda}{\mu}\to0}\tilde{N}_{\lambda,\mu}(0)=\tilde{N}_0(0)=0,\end{equation} and
\begin{equation}\label{Ne}
\lim_{\lambda,\mu\to\infty,\ \frac{\lambda}{\mu}\to0}e^{(v_{\lambda,\mu}+u_0)(x_{\lambda,\mu})}=\lim_{\lambda,\mu\to\infty,\ \frac{\lambda}{\mu}\to0}\left((1+o(1))\left|\mu(x_{\lambda,\mu}-p_i)\right|^{2m_i}e^{\hat{v}_{\lambda,\mu}(\mu(x_{\lambda,\mu}-p_i))+\hat{u}_0(x_{\lambda,\mu})}\right)=0.
\end{equation}
However, \eqref{N} and \eqref{Ne} contradict the assumption \eqref{contra}.

\end{proof}


In view of Lemma \ref{lem3.2}, the first equation of \eqref{main_eq} can be regarded  as a perturbation of a single Chern-Simons equation \eqref{cs_eq}.  By applying the arguments in  \cite[Lemma 4.1]{CK}, we can obtain the following result. 
\begin{lemma}\label{uniformestimates}
 Suppose that
there exists a sequence of solutions $(u_{\lambda,\mu}, N_{\lambda,\mu})$ of \eqref{main_eq} such that
$$\lim_{\lambda,\mu\to\infty,\ \frac{\lambda}{\mu}\to0}\Big(\inf_\Omega|u_{\lambda,\mu}|\Big)=0.$$
Then, we have \begin{equation}\label{K}\lim_{\mlt}\|u_{\lambda,\mu}\|_{L^\infty(K)}=0\ \ \ \textrm{for any compact set }\ \ K\subset\Omega\setminus Z.\end{equation}
\end{lemma}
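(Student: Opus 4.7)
The plan is to invoke the Brezis--Merle type trichotomy of Theorem~\ref{BrezisMerletypealternatives} and show that the hypothesis is inconsistent with alternatives~(ii) and~(iii), so alternative~(i) holds along every subsequence, which is precisely~\eqref{K}. The scheme parallels~\cite[Lemma~4.1]{CK} for the single Chern--Simons equation~\eqref{cs_eq}: the coupling to $N_{\lambda,\mu}$ is absorbed via Lemma~\ref{lem3.2}, which gives $N_{\lambda,\mu}/\lambda=e^{u_{\lambda,\mu}}+o(1)$ uniformly, and via the estimate $N_{\lambda,\mu}/\mu=O(\lambda/\mu)=o(1)$, so that the first equation of~\eqref{main_eq} is effectively a perturbation of~\eqref{cs_eq}. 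By Lemma~\ref{lemma_RT} we have $u_{\lambda,\mu}<0$, hence the hypothesis is equivalent to $\sup_\Omega u_{\lambda,\mu}\to 0^-$; choose $x_{\lambda,\mu}\in\Omega$ realizing the supremum.

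Alternative~(ii) is ruled out immediately: there $u_{\lambda,\mu}=\hat w+u_0-2\ln\lambda+o(1)$ in $C^1_{\mathrm{loc}}(\Omega)$, and $\hat w+u_0$ is bounded above on $\Omega$ (since $\hat w$ is continuous on $\Omega$, while $u_0$ is continuous on $\Omega\setminus Z$ with $u_0(x)\to-\infty$ as $x\to p_i$), so $u_{\lambda,\mu}\to-\infty$ pointwise on $\Omega$, contradicting $\sup u_{\lambda,\mu}\to 0$. To exclude~(iii): since $u_{\lambda,\mu}+2\ln\lambda\to-\infty$ uniformly on compacts of $\Omega\setminus B$, the maximum point $x_{\lambda,\mu}$ must converge (along a subsequence) to some $\hat q_j\in B$. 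In the representative case $\hat q_j\notin Z$ I rescale $\tilde u_{\lambda,\mu}(y):=u_{\lambda,\mu}(x_{\lambda,\mu}+\lambda^{-1}y)$; Lemma~\ref{grad_sol} provides a uniform local $C^1$ bound (after subtracting the smooth part of $u_0$ at the base point and the $o(1)$ contribution of $N_{\lambda,\mu}/\mu$), so along a further subsequence $\tilde u_{\lambda,\mu}\to\tilde u_\infty$ in $C^0_{\mathrm{loc}}(\mathbb{R}^2)$ with $\tilde u_\infty\le 0$, $\tilde u_\infty(0)=0$, and, in view of Lemma~\ref{lem3.2},
\[
\Delta\tilde u_\infty+e^{\tilde u_\infty}\bigl(1-e^{\tilde u_\infty}\bigr)=0\quad\text{in }\mathbb{R}^2.
\]
Corollary~\ref{cor1} yields $\int_{\mathbb{R}^2}e^{\tilde u_\infty}(1-e^{\tilde u_\infty})\,dy<\infty$, so Lemma~\ref{propertyofentiresolution} applies and forces $\tilde u_\infty$ to be radial about its unique maximum point, which must be the origin since $\tilde u_\infty(0)=0=\sup\tilde u_\infty$; Lemma~\ref{propertyofentiresolution}(i) then gives $\tilde u_\infty\equiv 0$. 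Consequently the mass of $\lambda^2 e^{u_{\lambda,\mu}}(1-N_{\lambda,\mu}/\lambda)$ in any $O(\lambda^{-1})$-neighborhood of $x_{\lambda,\mu}$ is $o(1)$, contradicting the lower bound $\alpha_j\ge 8\pi$ in~(iii) once the rescaling window is shown to capture the bubble. The case $\hat q_j\in Z$ is handled analogously by centering the rescaling at the vortex point and invoking Lemma~\ref{lemma2.1} for the singular Chern--Simons profile.

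Having excluded~(ii) and~(iii), alternative~(i) must hold along every subsequence, which is precisely~\eqref{K}. The main obstacle is the last step of~(iii): quantitatively verifying that the $\lambda^{-1}$-scale rescaling around $x_{\lambda,\mu}$ captures the full blow-up mass $\alpha_j\ge 8\pi$ even when $x_{\lambda,\mu}$ is displaced from the bubble center $q^j_{\lambda,\mu}$, and adapting the rescaling at a vortex point $\hat q_j\in Z$ where the source term is singular. This requires combining Lemma~\ref{grad_sol}, Lemma~\ref{lem3.2}, Corollary~\ref{cor1}, and the structural information on entire Chern--Simons solutions from Lemmas~\ref{lemma2.1} and~\ref{propertyofentiresolution}.
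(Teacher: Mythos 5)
Your proposal is circular in the paper's logical structure: Lemma~\ref{uniformestimates} is precisely the ingredient used to derive Proposition~\ref{alternatives}, which is in turn the dichotomy that feeds the proof of Theorem~\ref{BrezisMerletypealternatives}. Invoking the trichotomy of Theorem~\ref{BrezisMerletypealternatives} to prove Lemma~\ref{uniformestimates} therefore presupposes the conclusion you are trying to establish. To make your scheme non-circular you would have to re-derive alternative~(i) of the trichotomy (i.e.\ ``bounded maximum implies uniform convergence to $0$ away from $Z$'') independently, which is exactly the content of the lemma.

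Even if the trichotomy were available, the step you acknowledge as ``the main obstacle'' is a genuine gap, and a closer look shows it cannot be patched by a direct rescaling at the maximum. If you blow up around the maximizer $x_{\lambda,\mu}$ where $u_{\lambda,\mu}\to 0$, the limit $\tilde u_\infty\le 0$ with $\tilde u_\infty(0)=0$ is indeed forced to be $\equiv 0$ by Lemma~\ref{propertyofentiresolution}(i), but a trivial limit carries zero mass and therefore gives \emph{no} contradiction with the concentration $\alpha_j\ge 8\pi$: the $8\pi$-mass could live at a scale other than $\lambda^{-1}$ or at a point displaced from $x_{\lambda,\mu}$, and nothing in your argument rules that out. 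The paper's proof circumvents both difficulties by never rescaling at the max. Instead, arguing directly by contradiction, it uses Lemma~\ref{propertyofentiresolution}(ii) to pick a negative level $s_1$ with $\beta(s_1)>4\mathfrak{M}$, uses the intermediate value theorem along a path joining the (near-zero) point $x_{\lambda,\mu}$ to the (bad) point where $|u_{\lambda,\mu}|\ge c_K$ to find $y_{\lambda,\mu}$ with $u_{\lambda,\mu}(y_{\lambda,\mu})=s_1$, and rescales at $\lambda^{-1}$-scale around $y_{\lambda,\mu}$. The limit $u_*$ then satisfies $u_*(0)=s_1<0$, is radial about some $\bar p$ with $u_*(\bar p)\ge s_1$, and its mass $2\pi\beta(u_*(\bar p))\ge 2\pi\beta(s_1)>8\pi\mathfrak{M}$ contradicts the total bound $4\pi\mathfrak{M}$ from Corollary~\ref{cor1}. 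The singular case $x_0\in Z$ is then reduced to this argument after showing $\inf_{|x|=r_0}|u_{\lambda,\mu}|\to 0$ on a small circle around the vortex. The key idea you missed is this choice of intermediate rescaling point, not the handling of the $N_{\lambda,\mu}$-coupling, which you correctly absorb via Lemma~\ref{lem3.2} and Lemma~\ref{grad_sol}.
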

\begin{proof}
Choose a sequence of points $\{x_{\lambda,\mu}\}\subseteq \Omega$ such that
\begin{equation}|u_{\lambda,\mu}(x_{\lambda,\mu})|=\inf_\Omega|u_{\lambda,\mu}|, \ \ \textrm{and} \ \lim_{\mlt}u_{\lambda,\mu}(x_{\lambda,\mu})=0.\label{to0}\end{equation}
Passing to a subsequence (still denoted by $u_{\lambda,\mu}$),
we may assume that $\lim_{\lambda,\mu\to\infty,\ \frac{\lambda}{\mu}\to0}x_{\lambda,\mu}=x_0\in \Omega$. We consider the following two cases according to the location of $x_0$.

\medskip
\noindent
\textbf{Case 1.} $x_0\notin Z$.\\
Let $d>0$ be a small constant satisfying $B_d(x_0)\cap Z=\emptyset$.
We argue by contradiction and suppose that there exist a compact set $K\subset\Omega\setminus Z$, a positive constant $c_K>0$, and a sequence $\{z_{\lambda,\mu}\}\subset K$  such that
$\sup_K|u_{\lambda,\mu}|=|u_{\lambda,\mu}(z_{\lambda,\mu})|\ge c_K>0$ for  large $\lm0$. We choose a connected compact set $K_1\subset\Omega\setminus Z$ satisfying $B_d(x_0)\cup K \subset K_1$.
Since $u_{\lambda,\mu}(z_{\lambda,\mu})\le-c_K<0$,  Lemma \ref{propertyofentiresolution} implies there is a constant  $s_1<0$ such that
$$\beta(s_1)>4\mathfrak{M}\ \textrm{and}\ -c_K<s_1<0.$$
We can also choose $y_{\lambda,\mu}\in K_1$ such that $u_{\lambda,\mu}(y_{\lambda,\mu})=s_1$ by the intermediate value theorem.\\
Let $\bar{u}_{\lambda,\mu}(x)=\left(u_{\lambda,\mu}+\frac{\nl}{\mu}\right)(\lambda^{-1} x+y_{\lambda,\mu})$ for $x\in\Omega_{y_{\lambda,\mu}}\equiv\{\ x\in\mathbb{R}^2\ |\ \lambda^{-1} x+y_{\lambda,\mu}\in K_1\ \}$.\\
By Corollary \ref{cor1} and $u_{\lambda,\mu}(y_{\lambda,\mu})=s_1$, we see that  $\bar{u}_{\lambda,\mu}$ satisfies
\begin{equation}
\begin{aligned}
\left\{
 \begin{array}{ll}
 \Delta \bar{u}_{\lambda,\mu}+ e^{\bar{u}_{\lambda,\mu}(x)-\frac{\nl}{\mu}(\lambda^{-1} x+y_{\lambda,\mu})}\left(1-\frac{N_{\lambda,\mu}(\lambda^{-1} x+y_{\lambda,\mu})}{\lambda}\right) =0\ \ \  \textrm{in}\ \ \Omega_{y_{\lambda,\mu}},
 \\ \bar{u}_{\lambda,\mu}(0)=s_1+\frac{\nl(y_{\lambda,\mu})}{\mu},
 \\ \int_{\Omega_{y_{\lambda,\mu}}} \left|e^{\bar{u}_{\lambda,\mu}(x)-\frac{\nl}{\mu}(\lambda^{-1} x+y_{\lambda,\mu})}\left(1-\frac{N_{\lambda,\mu}(\lambda^{-1} x+y_{\lambda,\mu})}{\lambda}\right)\right| dx\le {4\pi\mathfrak{M}}.
 \end{array}\right.
\end{aligned}
\end{equation}
By using Lemma \ref{grad_sol} and $W^{2,p}$ estimation, we see that $\bar{u}_{\lambda,\mu}$ is bounded in $C^{1,\sigma}_{\textrm{loc}}(\Omega_{y_{\lambda,\mu}})$ for some $\alpha\in(0,1)$.
In view of Lemma \ref{lemma_RT} and Lemma \ref{lem3.2}, we see that  if $0<\lambda\ll\mu$, then
\begin{equation}\begin{aligned}\label{ee}&e^{\bar{u}_{\lambda,\mu}(x)-\frac{\nl}{\mu}(\lambda^{-1} x+y_{\lambda,\mu})}\left(1-\frac{N_{\lambda,\mu}(\lambda^{-1} x+y_{\lambda,\mu})}{\lambda}\right)\\&= e^{\bar{u}_{\lambda,\mu}(x)}\left(1+O\left(\frac{\|N_{\lambda,\mu}\|_{L^\infty(\Omega)}}{\mu}\right)\right)\left(1-e^{\bar{u}_{\lambda,\mu}(x)}+o(1)+O\left(\frac{\|N_{\lambda,\mu}\|_{L^\infty(\Omega)}}{\mu}\right)\right)
\\&=e^{\bar{u}_{\lambda,\mu}(x)}(1+o(1))\left(1-e^{\bar{u}_{\lambda,\mu}(x)}+o(1)\right),\end{aligned}\end{equation} and  $\bar{u}_{\lambda,\mu}(0)=s_1+o(1).$ Passing to a subsequence,  $\bar{u}_{\lambda,\mu}$ converges in
$C^1_{\textrm{loc}}(\mathbb{R}^2)$ to a function $u_*$, which is a solution of
\begin{equation}
\begin{aligned}
\left\{
 \begin{array}{ll}
 \Delta u_*+ e^{u_*}(1-e^{u_*}) =0\ \textrm{in}\ \mathbb{R}^2,
 \\ u_*(0)=s_1,
 \\ \int_{\mathbb{R}^2}|e^{u_*}(1-e^{u_*})|dx\le {4\pi\mathfrak{M}}.
 \end{array}\right.
\end{aligned}
\end{equation}
By using   Lemma \ref{propertyofentiresolution}, we see that $u_*$ is
radially symmetric with respect to a point $\bar{p}$ in $\mathbb{R}^2$.\\
In view of Lemma \ref{propertyofentiresolution},  we have 
\begin{equation}
\begin{aligned}
{4\pi\mathfrak{M}}
\ge \Big|\int_{\mathbb{R}^2} e^{u_*}(1-e^{u_*}) dx\Big|
=2\pi|\beta(u_*(\bar{p}))|\ge2\pi|\beta(s_1)|
>{8\pi\mathfrak{M}},
\end{aligned}
\end{equation}
which implies a contradiction. Thus    \eqref{K} holds true in Case 1.

\medskip
\noindent
\textbf{Case 2.} $x_0=p_i\in Z$ for some $i$.
\\ Fix a small constant $r_0>0$ such that $B_{r_0}(x_0)\cap Z=\{x_0\}$.
For simplicity, we assume that $x_0=0$.  We are going to show that
\begin{equation}\lim_{\mlt}\left(\inf_{|x|=r_0}|\ulm(x)|\right)=0.\label{cmlt}\end{equation}
Once we have \eqref{cmlt}, the argument in Case 1 implies \eqref{K}. In order to prove \eqref{cmlt}, we argue by contradiction again and suppose that, up to a subsequence, $\lim_{\mlt}\left(\inf_{|x|=r_0}|u_{\lambda,\mu}(x)|\right)\ge\tau_0$ for some constant $\tau_0>0$. Since $u_{\lambda,\mu}<0$, we have
\begin{equation}\label{gam}\lim_{\mlt}\left(\sup_{|x|=r_0}\ulm(x)\right)<-\tau_0.\end{equation} We divide our discussion into the following two cases.

\medskip

\noindent
(i). $\lim_{\mlt}\left( \lambda|x_{\lambda,\mu}|\right)<+\infty$.
\\ Note that $\ulm(x)= 2m_i\ln|x|+v_{\lambda,\mu}(x)$ near $x=0$ for some $1\le i\le n$, where $v_{\lambda,\mu}$ is a smooth function in $B_d(0)$. Let
\[
\hat{v}_{\lambda,\mu}(x)= v_{\lambda,\mu}(|x_{\lambda,\mu}|x)+2m_i\ln|x_{\lambda,\mu}|+\frac{\nl(|x_{\lambda,\mu}|x)}{\mu}\ \ \textrm{for} \ \ |x|\le\frac{r_0}{|x_{\lambda,\mu}|}.
\]
Then $\hat{v}_{\lambda,\mu}$ satisfies
\begin{equation}\label{vmain_eq}
\left\{\begin{array}{l}
\Delta \hat{v}_{\lambda,\mu}=-\lambda^2|x_{\lambda,\mu}|^2|x|^{2m_i} e^{\hat{v}_{\lambda,\mu}(x)-\frac{\nl(|x_{\lambda,\mu}|x)}{\mu}}\left(1-\frac{\nl(|x_{\lambda,\mu}|x)}{\lambda}\right)\ \ \textrm{in}\ \ B_{r_1}(0),\\
\int_{B_{\frac{r_0}{|x_{\lambda,\mu}|}}(0)}\lambda^2|x_{\lambda,\mu}|^2|x|^{2m_i} e^{\hat{v}_{\lambda,\mu}(x)-\frac{\nl(|x_{\lambda,\mu}|x)}{\mu}}\left(1-\frac{\nl(|x_{\lambda,\mu}|x)}{\lambda}\right)dx\le 4\pi\mathfrak{M}
\end{array}
\right. \mbox{ in }B_{\frac{r_0}{|x_{\lambda,\mu}|}}(0).
\end{equation}
By \eqref{to0}, we note that
\begin{equation}\label{limv}\lim_{\mlt}\hat{v}_{\lambda,\mu}
\left(\frac{x_{\lambda,\mu}}{|x_{\lambda,\mu}|}\right)
=\lim_{\mlt}\left(\ulm(x_{\lambda,\mu})
+\frac{\nl(x_{\lambda,\mu})}{\mu}\right)=0.\end{equation}  Together with Lemma \ref{grad_sol} and the assumption $\liminf_{\mlt}\left( \lambda|x_{\lambda,\mu}|\right)<+\infty$, we see that $\hat{v}_{\lambda,\mu}$ is bounded in $C^0_{\textrm{loc}}\left(B_{r_1}(0)\right)$.  Passing to a subsequence, we may assume that \[\lim_{\mlt}\frac{x_{\lambda,\mu}}{|x_{\lambda,\mu}|}=y_0\in \mathbb{S}^1,\ \lim_{\mlt}\left(\lambda|x_{\lambda,\mu}|\right)=c_0\ge0, \ \textrm{and}\ \hat{v}_{\lambda,\mu}\to\hat{v}\ \textrm{ in}\ C_{\textrm{loc}}^1\left(B_{r_1}(0)\right),\] for some function $\hat{v}$. By using  Lemma \ref{lemma_RT} and Lemma \ref{lem3.2} as in \eqref{ee}, we see that $\hat{u}(x)=\hat{v}(x)+2m_i\ln|x|$ satisfies
\[
\Delta\hat{u}+c_0^2e^{\hat{u}}(1-e^{\hat{u}})=4\pi m_i\delta_0\ \ \ \textrm{in}\ \ \RN.
\]
In view of Lemma \ref{lemma_RT}, we have  $ 0\le \frac{\nl(|x_{\lambda,\mu}|x)}{\lambda}\le 1 $. Moreover, Lemma \ref{lem3.2} implies    $\lim_{\mlt}\frac{\nl(|x_{\lambda,\mu}|x)}{\lambda}=e^{\hat{u}(x)}\in [0,1]$. Since $\hat{u}\le 0$ in $\RN$ and $\hat{u}(y_0)= 0$, we have $\hat{u}\equiv0$ by Hopf Lemma, which implies a contradiction.

\medskip
\noindent
(ii).
$\lim_{\mlt}\left( \lambda|x_{\lambda,\mu}|\right)= +\infty$.\\
Lemma \ref{propertyofentiresolution} implies there is a constant  $s_2<0$ such that
\[
\beta(s_2)>4\mathfrak{M}\ \textrm{and}\ -\tau_0<s_2<0,
\]
where $\tau_0$ is the constant in \eqref{gam}.
We can also choose $\hat{y}_{\lambda,\mu}$ on the line segment  joining $x_{\lambda,\mu}$ and $\frac{r_0 x_{\lambda,\mu}}{|x_{\lambda,\mu}|}$ such that $u_{\lambda,\mu}(\hat{y}_{\lambda,\mu})=s_2$ and $|\hat{y}_{\lambda,\mu}|\ge |x_{\lambda,\mu}|$ by the intermediate value theorem.
Let $\hat{u}_{\lambda,\mu}(x)=\left(u_{\lambda,\mu}+\frac{\nl}{\mu}\right)(\lambda^{-1} x+\hat{y}_{\lambda,\mu})$ for $x\in\hat{\Omega}_{\hat{y}_{\lambda,\mu}}\equiv\{\ x\in\mathbb{R}^2\ |\ \lambda^{-1} x+\hat{y}_{\lambda,\mu}\in B_{\frac{|x_{\lambda,\mu}|}{2}}(\hat{y}_{\lambda,\mu}) \ \}$.  Here we note that $0\notin B_{\frac{|x_{\lambda,\mu}|}{2}}(\hat{y}_{\lambda,\mu}) $.
Then $\hat{u}_{\lambda,\mu}$ satisfies
\begin{equation}
\begin{aligned}
\left\{
 \begin{array}{ll}
 \Delta \hat{u}_{\lambda,\mu}+ e^{\hat{u}_{\lambda,\mu}-\frac{\nl}{\mu}(\lambda^{-1} x+\hat{y}_{\lambda,\mu})}\left(1-\frac{N_{\lambda,\mu}(\lambda^{-1} x+\hat{y}_{\lambda,\mu})}{\lambda}\right) =0\ \ \  \textrm{in}\ \ \hat{\Omega}_{\hat{y}_{\lambda,\mu}},
 \\ \hat{u}_{\lambda,\mu}(0)=s_2+\frac{\nl(\hat{y}_{\lambda,\mu})}{\mu},
 \\ \int_{\hat{\Omega}_{\hat{y}_{\lambda,\mu}}} |e^{\hat{u}_{\lambda,\mu}}(1-e^{\hat{u}_{\lambda,\mu}})| dx\le {4\pi\mathfrak{M}}.
 \end{array}\right.\label{nonl}
\end{aligned}
\end{equation} Using the same argument as in Case 1, we get a  contradiction by comparing the upper/lower bound of $L^1$ norm of the nonlinear term in \eqref{nonl}.  Thus the claim \eqref{cmlt} holds true.  Then we can again apply the arguments in  Case 1  and prove \eqref{K} holds true.

\end{proof}

As a corollary of Lemma \ref{uniformestimates}, we get the following proposition.
\begin{proposition}\label{alternatives}
Let $(u_{\lambda,\mu}, \nl)$ be a sequence of solutions of \eqref{main_eq}. Then, up to subsequences, one of the following holds true:

(i) $u_{\lambda,\mu}\to0$ uniformly on any compact subset of $\Omega\setminus Z$ as $\mlt$, or

(ii) there exists a constant $\nu_0>0$ such that $\sup_{\lambda,\mu\to\infty,\ \frac{\lambda}{\mu}\to0}\Big(\sup_\Omega u_{\lambda,\mu}\Big)\le-\nu_0$.
\end{proposition}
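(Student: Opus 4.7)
The plan is to derive Proposition \ref{alternatives} directly from Lemma \ref{uniformestimates} via a dichotomy on $\sup_\Omega u_{\lambda,\mu}$. Recall from Lemma \ref{lemma_RT} that $u_{\lambda,\mu}<0$ throughout $\Omega$, so $|u_{\lambda,\mu}|=-u_{\lambda,\mu}$ and in particular
\[
\inf_\Omega |u_{\lambda,\mu}| \;=\; -\sup_\Omega u_{\lambda,\mu} \;\ge\; 0.
\]
Set $s_{\lambda,\mu}:=-\sup_\Omega u_{\lambda,\mu}\ge 0$. By passing to a subsequence (not relabeled), I may assume $s_{\lambda,\mu}\to s_\ast\in[0,+\infty]$ as $\mlt$.

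If $s_\ast=0$, then $\inf_\Omega |u_{\lambda,\mu}|\to 0$, so Lemma \ref{uniformestimates} applies along the subsequence and delivers $\|u_{\lambda,\mu}\|_{L^\infty(K)}\to 0$ for every compact $K\subset\Omega\setminus Z$, which is exactly conclusion (i). If instead $s_\ast>0$, I fix any $\nu_0\in(0,s_\ast)$ (for instance $\nu_0=s_\ast/2$ when $s_\ast<\infty$, and $\nu_0=1$ when $s_\ast=+\infty$). Then $s_{\lambda,\mu}\ge\nu_0$ for all sufficiently large $\lambda,\mu$ along the prescribed limit, i.e.\ $\sup_\Omega u_{\lambda,\mu}\le -\nu_0$, which is conclusion (ii).

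There is no genuine obstacle to overcome here, since the technical substance of the proposition is entirely captured by Lemma \ref{uniformestimates}. The sole role of the proof is to translate the hypothesis $\inf_\Omega|u_{\lambda,\mu}|\to 0$ of that lemma into one branch of a clean dichotomy, using the sign property $u_{\lambda,\mu}<0$ from Lemma \ref{lemma_RT} to identify $\inf_\Omega|u_{\lambda,\mu}|$ with $-\sup_\Omega u_{\lambda,\mu}$, and then to read off the complementary branch as the uniform strict-negativity conclusion (ii).
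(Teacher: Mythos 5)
Your proof is correct and is precisely the argument the paper has in mind: the authors state the proposition "as a corollary of Lemma \ref{uniformestimates}" without further proof, and your dichotomy on $s_{\lambda,\mu}=-\sup_\Omega u_{\lambda,\mu}$, justified via the sign property $u_{\lambda,\mu}<0$ from Lemma \ref{lemma_RT} to identify $\inf_\Omega|u_{\lambda,\mu}|$ with $-\sup_\Omega u_{\lambda,\mu}$, is exactly the intended reduction.
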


\medskip
\noindent
\textbf{Completion of the proof of Theorem \ref{BrezisMerletypealternatives}.}
Note that Proposition \ref{alternatives}-(i) corresponds to Theorem \ref{BrezisMerletypealternatives}-(i).
In order to complete the proof of Theorem \ref{BrezisMerletypealternatives}, from now on, we will study the asymptotic behavior for the solution $(\ulm,\nl)$ of \eqref{main_eq} satisfying Proposition \ref{alternatives}-(ii).
Let us denote
\[
w_{\lambda,\mu}= u_{\lambda,\mu}+\frac{\nl}{\mu} +2\ln\lambda \quad\mbox{in }~ \Omega.
\]
Clearly $w_{\lambda,\mu}$ satisfies
\begin{equation}
  \label{wmaineq}\begin{aligned}
\left\{
 \begin{array}{ll}
\Delta w_{\lambda,\mu} +  e^{w_{\lambda,\mu}-\frac{\nl}{\mu}}\left(1-\frac{\nl}{\lambda}\right)
 = 4\pi\sum_{i=1}^n m_{i}\delta_{p_i} \quad \mbox{in }~ \Omega,\\
 \\ \sup_{\lambda,\mu\to\infty,\ \frac{\lambda}{\mu}\to0}\sup_\Omega\left(w_{\lambda,\mu}-2\ln\lambda-\frac{\nl}{\mu}\right)\le-\nu_0<0, \\ \int_{\Omega} |e^{w_{\lambda,\mu}-\frac{\nl}{\mu}}\left(1-\frac{\nl}{\lambda}\right)| dx=\int_{\Omega}e^{w_{\lambda,\mu}-\frac{\nl}{\mu}}\left(1-\frac{\nl}{\lambda}\right)dx= {4\pi\mathfrak{M}}.
 \end{array}\right.
\end{aligned}
\end{equation}
By Lemma \ref{lemma_RT} and Lemma \ref{lem3.2}, as $\mlt$, we have  (for example, see \eqref{ee})
\begin{equation}\label{weqp}
\Delta w_{\lambda,\mu} +e^{\wlm}(1+o(1)) \left(1-\lambda^{-2}e^{\wlm}+o(1)\right)
 = 4\pi\sum_{i=1}^n m_{i}\delta_{p_i} \quad \mbox{in }~ \Omega,
\end{equation}
and
\begin{equation*}
\int_{\Omega}e^{\wlm}(1+o(1))\left|\left(1-\lambda^{-2}e^{\wlm}+o(1)\right)\right|dx=4\pi\mathfrak{M}.
\end{equation*}
Then we have
\begin{equation}\label{w1l}
0<2\pi\mathfrak{M}\le \|e^{\wlm}\|_{L^1(\Omega)}\le\frac{8\pi\mathfrak{M}}{1-e^{-{\nu_0}}}.
\end{equation}
We consider the following two cases:

\medskip\noindent
\textbf{Case 1.} $\sup_{\Omega}\wlm\le C$ for some constant $C>0$.

 In this case, we note that the Harnack inequality (i.e. Lemma \ref{harnarkineq}) and  \eqref{w1l}  imply
$\wlm-u_0$ is uniformly bounded in $L^\infty(\Omega)$. Moreover,  in view of $W^{2,2}$ estimation, $\wlm-u_0$ is uniformly bounded in $C^{1,\alpha}(\Omega)$ for some $\alpha\in(0,1)$. Then  $\wlm-u_0\to\hat{w}$ in $C^{1}_{\textrm{loc}}(\Omega)$, where $\hat{w}$ satisfies
\[\Delta \hat{w} +e^{\hat{w}+u_0}=4\pi\mathfrak{M}.\]
We note that Case 1 implies Theorem \ref{BrezisMerletypealternatives}-(ii).

\medskip\noindent
\textbf{Case 2.} $\lim_{\mlt}\sup_{\Omega}\wlm=+\infty.$

Following \cite{BM}, we say that a point $q\in \Omega$ is a blow-up point for $\{w_{\lambda,\mu}\}$
if there exists a sequence $\{x_{\lambda,\mu,q}\}$ such that
\[ x_{\lambda,\mu,q} \to q \quad\mbox{and}\quad w_{\lambda,\mu} (x_{\lambda,\mu,q}) \to \infty
 \quad\mbox{as }~ \lambda,\mu\to\infty,\ \frac{\lambda}{\mu}\to0. \]
The set $S\subset\Omega $, which consists   of blow-up points for $\{w_{\lambda,\mu}\}$, is called the blow-up set for $\{w_{\lambda,\mu}\}$.

\medskip\noindent
\textbf{Step 1.} Let $p\in\Omega$ be a blow-up point for $\{w_{\lambda,\mu}\}$.
Then we have  the following ``minimal mass" result.
\begin{equation}\label{massofsingularpoint} \liminf_{\lambda,\mu\to\infty,\ \frac{\lambda}{\mu}\to0}\int_{B_d(p)}{e}^{w_{\lambda,\mu}-\frac{\nl}{\mu}}\left(1-\frac{\nl}{\lambda}\right) dx \ge 8\pi\ \ \textrm{for any}\ \ d>0.\end{equation}
Indeed, we note that  the equation \eqref{weqp} is a perturbation of  \begin{equation}\label{eqw0}
\Delta w_{\lambda} +e^{w_{\lambda}} \left(1-\lambda^{-2}e^{\wlm} \right)
 = 4\pi\sum_{i=1}^n m_{i}\delta_{p_i} \quad \mbox{in }~ \Omega,
\end{equation}
and the minimal mass result for \eqref{eqw0} was obtained in \cite[Lemma 4.2]{CK}.  By the similar arguments in \cite{CK}, we can also get \eqref{massofsingularpoint} for the solution $\wlm$ of \eqref{weqp}. Here we skip the detail and refer to \cite{CK}.

The estimation \eqref{massofsingularpoint} shows that if Case 2 happens, then
$\{w_{\lambda,\mu}\}$ has a nonempty finite blow-up set $B\subset \Omega $,
and $|B| \le \frac{{4\pi\mathfrak{M}}}{8\pi}=\frac{\mathfrak{M}}{2}$. We also see that
for any compact set $K\subseteq\Omega\setminus B$,
there exists a constant $C_{K}>0$ such that
\begin{equation}
  \label{locbdd}
 \sup_{K}w_{\lambda,\mu}\le C_{K}.
\end{equation}

\medskip\noindent
\textbf{Step 2.}
In this step, we are going to prove that the blow up phenomena implies the concentration of mass as in \cite{BT,  BM,  CK}. However, our case is a coupled system problem, we should carry out a delicate analysis in order to prove the concentration of mass.
Firstly, we claim that
\begin{equation}\label{lem:outside} w_{\lambda,\mu}-u_0\to-\infty\ \ \ \textrm{ uniformly on any compact set} \ K\subset\Omega\setminus B.\end{equation}
Choose a small constant $d>0$ satisfying for any $q\in B$, $B_{2d}(q)\cap\left[B\cup Z\right]=\{q\}$.
For each $q\in B$, we let $\{x_{\lambda,\mu,q}\}$ be a sequence of points such that
\[x_{\lambda,\mu,q}\to q\in B\ \ \textrm{ and}\ \ w_{\lambda,\mu}(x_{\lambda,\mu,q})=\sup_{B_d(q)} w_{\lambda,\mu}\to\infty\ \ \textrm{ as}\ \ \lambda,\mu\to\infty,\ \frac{\lambda}{\mu}\to0.\]
We shall prove that
\begin{equation}
  \label{eq:-infty}
 \lim_{\lambda,\mu\to\infty,\ \frac{\lambda}{\mu}\to0}\Big(\inf_{\partial B_r(q)}(w_{\lambda,\mu}-u_0)\Big)=-\infty
\end{equation}
for any $r\in(0,d]$ and all $q\in B$.
Then by (\ref{locbdd}) and Harnack's inequality, we get that
\[ \lim_{\lambda,\mu\to\infty,\ \frac{\lambda}{\mu}\to0} \big( \sup_{\Omega\setminus (\cup_{q_j\in B}B_r(q_j))} (w_{\lambda,\mu}-u_0) \big)
 =-\infty \quad \mbox{for any }~ r\in(0,d]. \]
To prove \eqref{eq:-infty}, we argue by contradiction and
suppose that there exist $r\in(0,d]$ and $q\in B$ such that
\begin{equation*}
 \begin{aligned}
\lim_{\lambda,\mu\to\infty,\ \frac{\lambda}{\mu}\to0} \Big(\inf_{\partial B_r(q)}(w_{\lambda,\mu}-u_0)\Big)\ge c,
 \end{aligned}
\end{equation*}
for some constant $c\in\mathbb{R}$. For simplicity, we assume that $q=0$.
By using (\ref{locbdd}) and Harnack's inequality, we can verify that $\{w_{\lambda,\mu} -u_0\}$ is bounded
in $C_{loc}^0 (B_{2d}(0) \setminus \{0\})$. Then elliptic estimates imply that
there exists a function $\xi\in C^{1,\sigma}_{loc}(B_{2d}(0)\setminus \{0\})$ such that
along a subsequence $w_{\lambda,\mu} -u_0\to\xi$ in $C^1_{loc}(B_{2d}(0) \setminus \{0\})$.
Let \[\alpha_q=\lim_{d\to0}\liminf_{\lambda,\mu\to\infty,\ \frac{\lambda}{\mu}\to0}\int_{B_d(p)}{e}^{w_{\lambda,\mu}-\frac{\nl}{\mu}}\left(1-\frac{\nl}{\lambda}\right) dx\ge 8\pi.\]
In view of Lemma \ref{lemma_RT} and Lemma \ref{lem3.2} as in \eqref{ee}, we see that
\begin{equation*}
 \begin{aligned}
{e}^{w_{\lambda,\mu}-\frac{\nl}{\mu}}\left(1-\frac{\nl}{\lambda}\right)
\to  e^{\xi+u_0} +\alpha_q\delta_{0},
 \end{aligned}
\end{equation*}
in the sense of measure on $B_{2d}(0)$.
By Corollary \ref{cor1} and Fatou's lemma, we have that $e^{\xi+u_0}\in L^1(B_{2d}(0))$.
Moreover, Green's representation formula implies that
\begin{equation*}
\begin{aligned}
\xi(x)=-\frac{\alpha_q}{2\pi} \ln|x| +\phi(x) +\eta(x),
\end{aligned}
\end{equation*}
where $\eta\in C^1(B_{r}(0))$ for every $r\in(0,d)$, and we let
\begin{equation}
  \label{definitionofphi}
\begin{aligned}
\phi(x)=\frac{1}{2\pi} \int_{B_{d}(0)} \ln\Big(\frac{1}{|x-y|}\Big) e^{(\xi+u_0)(y)}dy.
\end{aligned}
\end{equation}
We note that
\[ \phi\ge\frac{1}{2\pi}\ln\Big(\frac{1}{2d}\Big)\|e^{\xi +u_0}\|_{L^1(B_{d}(0))}
 \quad\mbox{on }~ B_{d}(0). \]
Then we see that $e^{\xi(x)}=|x|^{-\alpha_q/2\pi} e^{\phi+\eta}\ge c|x|^{-\alpha_q/2\pi}$
for $0<|x|\le d$ and some constant $c>0$.
Then the integrability of $e^{\xi+u_0}$ implies that
\begin{equation}
  \label{contradtioneq}
4\pi(1+m)>\alpha_q,
\end{equation}
where $m=m_{i}$ if $q=p_{i}\in B\cap {Z}$, and $m=0$ if $q\in B\setminus {Z}$.

Let $\phi_{\lambda,\mu}(x)=w_{\lambda,\mu}(x)-2m\ln|x|$. Then $\phi_{\lambda,\mu}$ satisfies
\begin{equation}
  \label{wisingentirernsoleq}
\Delta\pl+|x|^{2m}e^{\pl-\frac{\nl}{\mu}}(1-\frac{\nl}{\lambda})=0
 \quad\mbox{in }~ B_{d}(0).
\end{equation}
Multiplying (\ref{wisingentirernsoleq}) by $x\cdot\nabla \phi_{\lambda,\mu}$
and integrating over $B_r(0)$ for $r\in(0,d)$,  we get that
\begin{equation}
\begin{aligned}\label{eqp1}
&\int_{\partial B_r(0)} \left[ \frac{ (x\cdot\nabla \phi_{\lambda,\mu})^2}{|x|}
 -\frac{|x||\nabla \phi_{\lambda,\mu}|^2}{2} +|x|^{2m+1}e^{\pl-\frac{\nl}{\mu}}\left(1-\frac{\nl}{\lambda}\right)\right] d\sigma \\
&=  \int_{B_r(0)} \left(2+2m\right)e^{\pl+2m\ln|x|-\frac{\nl}{\mu}}\left(1-\frac{\nl}{\lambda}\right)dx
\\&\quad+  \int_{B_r(0)} e^{\pl+2m\ln|x|-\frac{\nl}{\mu}}\left[\left(\frac{\nl}{\lambda}-1\right)\frac{\nabla \nl}{\mu}\cdot x-\frac{\nabla \nl}{\lambda}\cdot x\right]dx.
\end{aligned}
\end{equation}
We recall the second equation in \eqref{main_eq}, which can be written into
\begin{equation}\label{eqn}
\Delta \nl=\mu^2 \left(1+\frac{|x|^{2m}e^{\pl-\frac{\nl}{\mu}}}{\lambda\mu}\right)\nl-\frac{ \mu}{\lambda}\left({\lambda}+{\mu}\right)|x|^{2m}e^{\pl-\frac{\nl}{\mu}}\ \ \textrm{in}\ \ \Omega.
\end{equation}
Multiplying (\ref{eqn}) by $x\cdot\nabla N_{\lambda,\mu}$
and integrating over $B_r(0)$ for $r\in(0,d)$,  we have
\begin{equation}
\begin{aligned}\label{eqp2}
&  \int_{B_r(0)} e^{\pl+2m\ln|x|-\frac{\nl}{\mu}}\left[\left(\frac{\nl}{\lambda}-1\right)\frac{\nabla \nl}{\mu}\cdot x-\frac{\nabla \nl}{\lambda}\cdot x\right]dx
\\&=\frac{1}{\mu^2}\int_{\partial B_r(0)} \left[ \frac{ (x\cdot\nabla N_{\lambda,\mu})^2}{|x|}
 -\frac{|x||\nabla N_{\lambda,\mu}|^2}{2}  \right] d\sigma -\int_{\partial B_r(0)} \frac{\nl^2|x|}{2}d\sigma+ \int_{B_r(0)}  \nl^2dx
\\&\ge\frac{1}{\mu^2}\int_{\partial B_r(0)} \left[ \frac{ r\left(\frac{\partial \nl}{\partial r}\right)^2}{2}
 -\frac{ \left(\frac{\partial \nl}{\partial \theta}\right)^2}{2r}- \frac{r\mu^2\nl^2}{2}  \right] d\sigma\\
 &\ge-\frac{1}{\mu^2}\int_{\partial B_r(0)} r\left[
 \frac{ \left(\frac{\partial \nl}{\partial \theta}\right)^2}{2r^2}+ \frac{\mu^2\nl^2}{2}  \right] d\sigma,
\end{aligned}
\end{equation}
here we used $\nabla\nl\cdot x=\frac{\partial\nl}{\partial r} r$ and $|\nabla \nl|^2=\left(\frac{\partial\nl}{\partial r}\right)^2+\frac{\left(\frac{\partial\nl}{\partial \theta}\right)^2}{r^2}$.

 We claim that  there is a sequence $\{r_l\}_{l\in \N}$ satisfying
\begin{equation}\label{rn}
\lim_{l\to\infty}r_l=0\ \ \ \textrm{and}\ \ \
\lim_{l\to\infty}\left(\frac{1}{\mu^2}\int_{\partial B_{r_l}(0)}r_l \left[
 \frac{ \left(\frac{\partial \nl}{\partial \theta}\right)^2}{r_l^2}+ \mu^2\nl^2 \right] d\sigma\right)=0.
\end{equation}
In order to prove the claim \eqref{rn}, we multiply the second equation in (\ref{main_eq}) by $ N_{\lambda,\mu}$
and integrate over $\Omega$.  Then we have $$\int_{\Omega}|\nabla \nl|^2+\mu^2 \left(1+\frac{\lambda}{\mu} e^{\ulm}\right)\nl^2dx
=\int_{\Omega}\lambda \mu^2\left(1+\frac{\lambda}{\mu}\right)e^{\ulm}\nl dx.$$ Together with Lemma \ref{lemma_RT} and \eqref{w1l}, we see that there is a constant $C>0$ such that
\begin{equation}\label{eqn11}
\begin{aligned}\frac{1}{\mu^2} \int_0^{d}\int_{\partial B_r(0)} \left[\frac{\left(\frac{\partial\nl}{\partial \theta}\right)^2}{r^2}+\mu^2 \nl^2 \right]&d\sigma  dr
 \le \frac{1}{\mu^2}\left( \int_{\Omega}|\nabla \nl|^2+\mu^2 \left(1+\frac{\lambda}{\mu} e^{\ulm}\right)\nl^2dx\right)
\\&=\int_{\Omega}\lambda \left(1+\frac{\lambda}{\mu}\right)e^{\ulm}\nl dx
 \le  \int_{\Omega}\lambda ^2\left(1+\frac{\lambda}{\mu}\right)e^{\ulm}dx\le C. \end{aligned}
\end{equation}
In view of \eqref{eqn11}, we note that there exists a sequence $r_l$ satisfying \eqref{rn}.
Otherwise, there exist constants $\e>0$ and $\bar{r}>0$ satisfying
\begin{equation*}
\frac{1}{\mu^2}\int_{\partial B_{r}(0)} r\left[
 \frac{ \left(\frac{\partial \nl}{\partial \theta}\right)^2}{r^2}+ \mu^2\nl^2 \right] d\sigma\ge \e  \ \ \textrm{for any}\ \ r\in(0,\bar{r}).
\end{equation*}
From \eqref{eqn11}, we have
\begin{equation*}
C\ge  \frac{1}{\mu^2}\int_0^{\bar{r}}\int_{\partial B_{r}(0)} \left[
 \frac{ \left(\frac{\partial \nl}{\partial \theta}\right)^2}{r^2}+ \mu^2\nl^2 \right] d\sigma dr\ge \int_0^{\bar{r}}\frac{\e}{r} dr =+\infty,
 \end{equation*}
which is a contradiction.

At this point, in view of   \eqref{eqp1}, \eqref{eqp2}, and \eqref{rn}, we see that for any $\e>0$, there is $l_{\e}$ such that if $l\ge l_\e$, then
\begin{equation}
\begin{aligned}
&\int_{\partial B_{r_l}(0)} \left[ \frac{ (x\cdot\nabla \phi_{\lambda,\mu})^2}{|x|}
 -\frac{|x||\nabla \phi_{\lambda,\mu}|^2}{2} +|x|^{2m+1}e^{\pl-\frac{\nl}{\mu}}\left(1-\frac{\nl}{\lambda}\right)\right] d\sigma \\
&=  \int_{B_{r_l}(0)} \left(2+2m\right)e^{\pl+2m\ln|x|-\frac{\nl}{\mu}}\left(1-\frac{\nl}{\lambda}\right)dx\\
&\quad+  \int_{B_{r_l}(0)} e^{\pl+2m\ln|x|-\frac{\nl}{\mu}}\left[\left(\frac{\nl}{\lambda}-1\right)\frac{\nabla \nl}{\mu}\cdot x
-\frac{\nabla \nl}{\lambda}\cdot x\right]dx\\
&\ge \int_{B_{r_l}(0)} \left(2+2m\right)e^{\pl+2m\ln|x|-\frac{\nl}{\mu}}\left(1-\frac{\nl}{\lambda}\right)dx
 -  \frac{1}{\mu^2}\int_{\partial B_{r_l}(0)} r_l\left[
 \frac{ \left(\frac{\partial \nl}{\partial \theta}\right)^2}{2r_l^2}
 + \frac{\mu^2\nl^2}{2}  \right] d\sigma\\
 &\ge \int_{B_{r_l}(0)} \left(2+2m\right)e^{\pl+2m\ln|x|-\frac{\nl}{\mu}}\left(1-\frac{\nl}{\lambda}\right)dx -\e.
\end{aligned} \label{tozeroasrzero}
\end{equation}
Let $\varphi_0(x)=(\xi+u_0)(x)-2m\ln|x|$.
Letting $\lambda,\mu\to\infty,\ \frac{\lambda}{\mu}\to0$ in (\ref{tozeroasrzero}), Lemma \ref{lemma_RT} and Lemma \ref{lem3.2} as in \eqref{ee} imply
\begin{equation}
  \label{finaltozeroasrzero}
\begin{aligned}
\int_{\partial B_{r_l}(0)} \Big[ \frac{(x\cdot\nabla \varphi_0)^2}{|x|}
 -\frac{|x||\nabla \varphi_0|^2}{2} &\quad+  |x|^{2m+1}e^{\varphi_0} \Big] d\sigma\\
& \ge (2m+2)\Big(\alpha_q +\int_{B_{r_l}(0)}e^{\xi+u_0}dx\Big)-\e.
\end{aligned}
\end{equation}
There exists a constant $c>0$ such that
$|x|^{2m}e^{\varphi_0} =e^{\xi+u_0}\le c|x|^{-\tau}e^\phi$ in $B_r(0)$ for any $r\in(0,d)$,
where $\tau\equiv\max\{0,\frac{\alpha_q}{2\pi}-2m\}$.
We note that $\tau\in[0,2)$ from (\ref{contradtioneq}).
In view of (\ref{definitionofphi}) and Corollary 1 in \cite{BM},
we see that $e^{|\phi|}\in L_{loc}^k(B_{d}(0))$ for any $k\in [1,\infty)$.
Since $\xi\in C^2_{loc}(B_{2d}(0)\setminus \{0\})$, we have
$|x|^{2m}e^{\varphi_0}\in L^t(B_{d}(0))$ for any $t\in(1,\frac{2}{\tau})$.
Then H\"{o}lder's inequality implies that $\phi\in L^\infty(B_{d}(0))$ and
\begin{equation}
  \label{s}
|x|^{2m}e^{\varphi_0} =e^{\xi+u_0}\le C|x|^{-\tau}\ \ \textrm{ for some constant}\ \ C>0.
\end{equation}
We note that for $|x|=r<d$,
\begin{equation*}
\begin{aligned}
 |\nabla\phi(x)|&\le\frac{1}{2\pi} \int_{B_{d}(0)}\frac{1}{|x-y|}e^{(\xi+u_0)(y)}dy \\
&= \frac{1}{2\pi} \Big[\int_{B_{d}(0)\setminus B_{r/2}(x)} \frac{e^{(\xi+u_0)(y)}}{|x-y|} dy
 +\int_{B_{d}(0)\cap B_{r/2}(x)} \frac{e^{(\xi+u_0)(y)}}{|x-y|} dy \Big].
\end{aligned}
\end{equation*}
Fix $t\in(1,\frac{2}{\tau})$ and choose a constant $a\in (0,\min\{1,2-\tau\})$ such that $\frac{at}{t-1} <2$.
H\"{o}lder's inequality implies that
\[ \int_{B_{d}(0)\setminus B_{r/2}(x)} \frac{e^{(\xi+u_0)(y)}}{|x-y|} dy
 \le  \int_{B_{d}(0)\setminus B_{r/2}(x)} \frac{Cr^{a-1}}{|y-x|^a}
 e^{(\xi +u_0)(y)} dy \le Cr^{a-1}. \]
Since $|x|=r$, we have $B_{r/2}(x)\subseteq\Omega\setminus B_{r/2}(0)$.
It follows from (\ref{s}) that
\[ \int_{B_{d}(0)\cap B_{r/2}(x)} \frac{e^{(\xi+u_0)(y)}}{|x-y|} dy
 \le \int_{|y-x|\le r/2} \frac{Cr^{-\tau}}{|y-x|} dy =O(r^{1-\tau}). \]
Since $a\in(0,2-\tau)$, we see that $|\nabla \phi(x)| =O(|x|^{a-1}+1)$ as $|x|\to 0$.
Consequently $\nabla\varphi_0(x)=-\frac{\alpha_q x}{2\pi|x|^2}+\nabla h(x)$
with $|\nabla h(x)| =O(|x|^{a-1}+1)$ as $|x|\to 0$.
Letting  $\e\to0$ and $r_l\to 0$ in (\ref{finaltozeroasrzero}),
we obtain that $(2m+2)\alpha_q\le\frac{\alpha_q^2}{4\pi}$, which contradicts (\ref{contradtioneq}).

Therefore, it follows from Harnack's inequality that $w_{\lambda,\mu}-u_0\to-\infty$ and
$w_{\lambda,\mu}\to-\infty$ uniformly on any compact subset of $\Omega\setminus B$.

\textbf{Step 3.}
In view of Lemma \ref{lemma_RT} and Corollary \ref{cor1}, along a subsequence,
${e}^{w_{\lambda,\mu}-\frac{\nl}{\mu}}\left(1-\frac{\nl}{\lambda}\right)$ converges to a nonnegative measure. However, this measure must be supported in $B$ since $w_{\lambda,\mu}\to-\infty$
uniformly on any compact set $K\subset \Omega\setminus B$.
Then we see that
 as $\lambda,\mu\to\infty,\ \frac{\lambda}{\mu}\to0$,
\[{e}^{w_{\lambda,\mu}-\frac{\nl}{\mu}}\left(1-\frac{\nl}{\lambda}\right)
 ~\to~ \sum_{q\in B} \alpha_q\delta_{q} ~\quad (\alpha_q\ge 8\pi) \]
in the sense of measure.
In view of the above arguments, we conclude that Case 2 implies Theorem \ref{BrezisMerletypealternatives}-(iii).  Now we complete the proof of Theorem \ref{BrezisMerletypealternatives}.

\qed

\section{Proof of Theorem \ref{thm1}}\label{sec4}

In this section, we are going to construct blow up solutions of (\ref{eq2}) such that
$$\sup u_{\lambda,\mu}\geq -c_0>-\infty.$$

Based on Theorem \ref{BrezisMerletypealternatives} above, our construction in this section was inspired by    the construction in \cite{LY2} where the authors constructed blow up solutions for the $SU(3)$ Chern-Simons system on torus using an entire regular solution for the single Chern-Simons equation as the building blocks.
\subsection{The approximate solution and the reduction}
Without loss of generality, we assume $|\Omega|=1$. We recall the equation (\ref{eq2}) as follows:
\begin{equation}\label{equation4.1}
\left\{\begin{array}{l}
\Delta (u+\frac{N}{\mu})=-\lambda^2 e^{u+u_0}\left(1-\frac{N}{\lambda}\right)+4\pi\mathfrak{M},\\
\Delta \frac{N}{\lambda}=\mu (\mu+\lambda e^{u+u_0})\frac{N}{\lambda}- \mu(\lambda+\mu)e^{u+u_0}
\end{array}
\right. \mbox{ in }\Omega.
\end{equation}

We are going to define the approximate solutions for (\ref{eq2}). Let  $w$ be  the radially symmetric solution of
\begin{equation}\label{decay}
\left\{\begin{array}{l}
\Delta w+e^{w}(1-e^w)=0 \ \textrm{in } \R^2,\\
w'(|x|)\rightarrow -\frac{2\mathfrak{M}}{|x|}+\frac{a_1(2\mathfrak{M}-2)}{|x|^{2\mathfrak{M}-1}}+O(\frac{1}{t^{2\mathfrak{M}+1}}), \ |x|\gg1,              \\
w(|x|)=-2\mathfrak{M}\ln t +I_1-\frac{a_1}{|x|^{2\mathfrak{M}-2}}+O(\frac{1}{|x|^{2\mathfrak{M}}}),   \ |x|\gg1,
\end{array}
\right.
\end{equation}where
$a_1$ and $I_1$ are constants (see \cite[Theorem 2.1, Lemma 2.6]{CFL} for the existence of $w$ satisfying \eqref{decay}).
We set
\begin{equation}\label{Uq}
U_{\lambda,q}(y)=\left\{\begin{array}{l}
w(\lambda|y-q|)-u_0(q)+4\pi \mathfrak{M}(\gamma(y,q)-\gamma(q,q))(1-\theta), \ y\in B_{d}(q),\\
w(d\lambda)-u_0(q)+4\pi \mathfrak{M}(G(y,q)-\gamma(q,q)+\frac{1}{2\pi}\ln d)(1-\theta), \ y \in \Omega /B_{d}(q),
\end{array}
\right.
\end{equation}
where \[\theta=\frac{1}{2\mathfrak{M}\lambda^{2\mathfrak{M}-2}}\{\frac{a_1(2\mathfrak{M}-2)}{d^{2\mathfrak{M}-2}}+O(\frac{1}{\lambda^2})\},\] which makes $U_{\lambda,q}\in C^1(\Omega)$, We would find a solution of \eqref{eq2} with the following form:
\begin{equation}\label{error}
u+\frac{N}{\mu}=U_{\lambda,q}+\varphi\ \ \textrm{and}\ \ \frac{N}{\lambda}=  e^{U_{\lambda,q}+u_0}(1+\varphi)+S,
\end{equation}
here $(\varphi,S)$ would be regard as an error term.
For the convenience, we also denote
\begin{equation}\label{f}
\begin{split}
w_{\lambda,q}(y)&=w(\lambda|y-q|),\ \ h(\varphi,S)=e^{U_{\lambda,q}+u_0}(1+\varphi)+S, \\
F(t)&=e^t(1-e^t)\ \ \textrm{and}\ \ f(t)=F'(t)=e^t(1-2e^t).
\end{split}
\end{equation}
The notation $1_{B_{2d}(q)}$ means that $1_{B_{2d}(q)}(x)=1$ if $x\in B_{2d}(q)$ and $1_{B_{2d}(q)}(x)=0$ if $x\notin{B_{2d}(q)}$.

Then the  equation (\ref{eq2}) is reduced to a system for $(\varphi,S)$:
\begin{equation}\label{eq_4}
\left\{\begin{array}{l}
\Delta \varphi+\lambda^2 f(w_{\lambda,q}(y))\cdot1_{B_{2d}(q)}\varphi=g_{1,\lambda,\mu}(\varphi,S),\\
\Delta S-\mu^2S=g_{2,\lambda,\mu}(\varphi,S),
\end{array}
\right.
\end{equation}
where
\begin{equation}\begin{aligned}\label{g1g2}
g_{1,\lambda,\mu}(\varphi,S):=&\quad-\Delta U_{\lambda,q} + \lambda^2f(w_{\lambda,q}(y))\cdot1_{B_{2d}(q)}\varphi -\lambda^2F(U_{\lambda,q}+u_0)+ 4\pi\mathfrak{M}\\
&\quad -\lambda^2f(U_{\lambda,q}+u_0)\varphi+\lambda^2F(U_{\lambda,q}+u_0) (1+\varphi-e^{\varphi-\frac{\lambda}{\mu}h(\varphi,S)})\\
&\quad + \lambda^2e^{U_{\lambda,q}+u_0}S +  \lambda^2e^{U_{\lambda,q}+u_0}(e^{\varphi-\frac{\lambda}{\mu}h(\varphi,S)}-1) (e^{U_{\lambda,q}+u_0}\varphi+S),\\
g_{2,\lambda,\mu}(\varphi,S):=&\quad-\Delta \left\{e^{U_{\lambda,q}+u_0}(1+\varphi)\right\} + \mu^2e^{U_{\lambda,q}+u_0}\left\{1+\varphi-e^{\varphi-\frac{\lambda}{\mu}h(\varphi,S)}\right\}\\
&\quad +\lambda\mu\left\{e^{2U_{\lambda,q}+2u_0+\varphi-\frac{\lambda}{\mu}h(\varphi,S)}(1+\varphi) +  (S -1) e^{U_{\lambda,q}+u_0+\varphi-\frac{\lambda}{\mu}h(\varphi,S)}\right\}.
\end{aligned}\end{equation}

\subsection{The linear and nonlinear problem}
In this subsection, we are going to study the linear and nonlinear problem. First, let us introduce the space we are going to work in. Fix a small constant $0<\alpha<\frac{1}{2}$. Let us introduce two function spaces $X_{\alpha,q}$ and $Y_{\alpha,q}$. Define
\begin{equation}
\rho(z)=(1+|z|)^{1+\frac{\alpha}{2}}, \quad \textrm{and} \quad \bar{\rho}(z)=\frac{1}{(1+|z|)(\ln(2+|z|))^{1+\frac{\alpha}{2}}}.
\end{equation}

We say that $\psi \in X_{\alpha,q}$ if
\begin{equation}
\|\psi\|_{X_{\alpha,q}}^2=\|(\Delta \tilde{\psi})\rho\|^2_{L^2(B_{2d\lambda}(0))}+\|\tilde{\psi}\bar{\rho}\|_{L^2(B_{2d\lambda}(0))}^2+\||\Delta \psi|^2+\psi^2\|_{L^1(\Omega /B_d(q))}<+\infty
\end{equation}
where $\tilde{\psi}(z)=\psi(\lambda^{-1}z+q)$.  We say $\psi \in Y_{\alpha, q}$ if
\begin{equation*}
\|\psi\|_{Y_{\alpha,q}}^2= \frac{1}{\lambda^4}\|\tilde{\psi}\rho\|^2_{L^2(B_{2d\lambda}(0))}+\|\psi\|^2_{L^2(\Omega \setminus B_d(q))}<+\infty.
\end{equation*}

Let $\chi(y)$ be a smooth cut off function such that $\chi=1$ in $B_{d}(0)$, $\chi=0$ in $\Omega / B_{2d}(0)$ and $0\leq \chi\leq 1$. Define
\begin{equation}\label{eq13}
W_{q,j}=\chi(y-q)\frac{\partial w(\lambda|y-q|)}{\partial q_j}, \ j=1,2.
\end{equation}

Let
\begin{equation}\label{eq15}
Z_{q,j}=-\Delta W_{q,j}+\lambda^2 e^{w(\lambda|y-q|)}W_{q,j}, \ j=1,2.
\end{equation}
We define two subspace of $X_{\alpha, q}$ and $Y_{\alpha}$ as
\begin{equation}\label{eq16}
E_{q}=\{u: u\in X_{\alpha,q}, \int_{\Omega}Z_{q,j}u=0, \ j=1,2 \}
\end{equation}
and
\begin{equation}\label{eq17}
F_{q}=\{u: u\in Y_{\alpha,q}, \int_{\Omega}W_{q,j}u=0,\ j=1,2 \}.
\end{equation}

Define the projection operator to $F_{q}$ as
\begin{equation}\label{eq18}
Q_{q}u=u-\sum_{j=1}^2 c_{j}Z_{q,j}
\end{equation}
where $c_{j}$ are chosen so that $Q_{q}u \in F_{q}$. We have the following estimates:
\begin{lemma}\label{lemma2}
There holds $\|Q_{q}u\|_{Y_{\alpha}}\leq C\|u\|_{Y_{\alpha}}$ for some positive constant $C$ independent of $q$.
\end{lemma}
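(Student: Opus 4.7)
\emph{Strategy.} The coefficients $c_1,c_2$ in $Q_qu=u-\sum_j c_jZ_{q,j}$ are fixed by the orthogonality requirement $Q_qu\in F_q$, which gives the $2\times2$ linear system
\[
\sum_{j=1}^2 \langle W_{q,k},Z_{q,j}\rangle\,c_j \;=\; \langle W_{q,k},u\rangle,\qquad k=1,2,
\]
with $\langle\cdot,\cdot\rangle$ the $L^2(\Omega)$ inner product. The plan is to establish three bounds, each uniform in $q$: (I) the coefficient matrix $M_{kj}:=\langle W_{q,k},Z_{q,j}\rangle$ is invertible with $\|M^{-1}\|=O(\lambda^{-2})$; (II) $|\langle W_{q,k},u\rangle|\le C\lambda\,\|u\|_{Y_{\alpha,q}}$; and (III) $\|Z_{q,j}\|_{Y_{\alpha,q}}=O(\lambda)$. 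Granting these, $|c_j|=O(\lambda^{-1})\|u\|_{Y_{\alpha,q}}$ and the triangle inequality gives
\[
\|Q_qu\|_{Y_{\alpha,q}} \;\le\; \|u\|_{Y_{\alpha,q}}+\sum_{j}|c_j|\,\|Z_{q,j}\|_{Y_{\alpha,q}} \;\le\; C\|u\|_{Y_{\alpha,q}}.
\]

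\emph{Matrix $M_{kj}$ (Step I).} Inside $B_d(q)$ the cut-off $\chi\equiv 1$, so $W_{q,j}=-\partial_{y_j}w_{\lambda,q}$ satisfies the \emph{linearized} equation $\Delta W_{q,j}+\lambda^2 f(w_{\lambda,q})W_{q,j}=0$ in $B_d(q)$. Using the elementary identity $f(t)+e^t=2e^t(1-e^t)=2F(t)$ together with the definition of $Z_{q,j}$, one obtains the simple formula
\[
Z_{q,j}=2\lambda^2 e^{w_{\lambda,q}}(1-e^{w_{\lambda,q}})\,W_{q,j}\qquad\text{in }B_d(q).
\]
After the rescaling $z=\lambda(y-q)$ and using the radial symmetry of $w$, the leading part of $M_{kj}$ becomes
\[
M_{kj}\;=\; 2\pi\lambda^2\,\delta_{kj}\int_0^{d\lambda}\!e^{w(r)}(1-e^{w(r)})\,[w'(r)]^2\,r\,dr \;+\;\text{(annular corrections on }B_{2d}\setminus B_d\text{)}.
\]
The off-diagonal entries vanish by the odd symmetry in $z_kz_j$, and the radial integral converges to a strictly positive constant as $\lambda\to\infty$ by the decay \eqref{decay} (the integrand decays like $r^{-4\mathfrak{M}-1}$). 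The annular corrections are easily absorbed: on $B_{2d}(q)\setminus B_d(q)$ one has $|w'(\lambda|y-q|)|=O(1/(\lambda d))$ and $\lambda^2 e^{w(\lambda|y-q|)}=O(\lambda^{2-2\mathfrak{M}})$, both of which are subdominant (the hypothesis $\mathfrak{M}>2$ is more than enough). Thus $M$ is diagonally dominant of order $\lambda^2$, invertible uniformly in $q$, with $\|M^{-1}\|=O(\lambda^{-2})$.

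\emph{Remaining estimates and conclusion (Steps II–III).} For (II), the pointwise bound $|W_{q,k}(y)|\le \lambda|w'(\lambda|y-q|)|$ and Cauchy–Schwarz with the weight $\rho$ give
\[
|\langle W_{q,k},u\rangle| \;\le\; \lambda^{-1}\|w'(|\cdot|)\rho^{-1}\|_{L^2(\mathbb{R}^2)}\,\|\tilde u\,\rho\|_{L^2(B_{2d\lambda})} \;\le\; C\lambda\,\|u\|_{Y_{\alpha,q}},
\]
because $\int |w'(|z|)|^2(1+|z|)^{-2-\alpha}dz<\infty$ (again using the $1/r$ far-field decay of $w'$) and $\|\tilde u\,\rho\|_{L^2(B_{2d\lambda})}\le\lambda^2\|u\|_{Y_{\alpha,q}}$ by definition of the $Y_{\alpha,q}$-norm. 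For (III), substituting the explicit form of $Z_{q,j}$ into the rescaled $L^2$ integral yields
\[
\lambda^{-4}\|\tilde Z_{q,j}\rho\|_{L^2(B_{2d\lambda})}^2 \;\lesssim\; \lambda^2\int_{\mathbb{R}^2}[e^{w}(1-e^{w})]^2[w']^2(1+|z|)^{2+\alpha}dz \;=\; O(\lambda^2),
\]
the integral converging thanks to the decay rate $|z|^{-4\mathfrak{M}+\alpha+1}$; since $Z_{q,j}$ is supported in $B_{2d}(q)$ and is $L^2$-bounded on the annulus, one concludes $\|Z_{q,j}\|_{Y_{\alpha,q}}=O(\lambda)$. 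Combining (I)–(III) produces the claimed uniform bound.

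\emph{Main obstacle.} The delicate point is the uniform-in-$q$ positivity of the leading diagonal term of $M_{kk}$, together with showing that the cut-off corrections on the annulus $B_{2d}(q)\setminus B_d(q)$ do not disturb the order-$\lambda^2$ main term nor the $O(\lambda)$ estimate of $\|Z_{q,j}\|_{Y_{\alpha,q}}$; both rely crucially on the precise asymptotic expansion of $w$ and $w'$ recorded in \eqref{decay}, so I would invoke those decay rates carefully rather than use crude bounds.
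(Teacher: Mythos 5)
The paper states this lemma without proof (it is presumably taken as standard, following \cite{LY2}), so there is no in-paper argument to compare against. Your reconstruction is correct and is exactly the natural proof. The key structural identity that makes the computation go is $Z_{q,j}=2\lambda^2 F(w_{\lambda,q})W_{q,j}$ in $B_d(q)$, which you derive correctly from $f(t)+e^t=2F(t)$, and which makes the Gram-type matrix $M_{kj}=\langle W_{q,k},Z_{q,j}\rangle$ diagonally dominant of size $\lambda^2$, the off-diagonals dropping out by the odd angular symmetry. Your three order counts are all right: $\|M^{-1}\|=O(\lambda^{-2})$ (the diagonal entry $2\pi\lambda^2\int_0^{d\lambda}F(w)[w']^2 r\,dr$ is bounded below uniformly, since the integrand is strictly positive and the integral converges like $r^{-2\mathfrak{M}-1}$), $|\langle W_{q,k},u\rangle|\le C\lambda\|u\|_{Y_{\alpha,q}}$ via Cauchy--Schwarz against $\rho$ and the scaling $\|\tilde u\rho\|_{L^2}\le\lambda^2\|u\|_{Y_{\alpha,q}}$, and $\|Z_{q,j}\|_{Y_{\alpha,q}}=O(\lambda)$ from the rescaled weighted integral (the annular cut-off contributions are $O(1)$ in $L^\infty$, hence at most $O(\lambda^{\alpha/2})$ in the weighted piece and $O(1)$ in the outer piece, both subdominant). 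Uniformity in $q$ is immediate because after translating $y\mapsto y-q$ the leading terms are $q$-independent. This is a valid filling-in of the paper's omitted proof; nothing essential is missing.
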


First we need the preliminary results for the linear operators $L_{1,q},\ L_2$ in \cite{FLL, LY2}, where
\begin{equation}\label{eq10}
\left\{\begin{array}{l}
L_{1,q}(\varphi):=\Delta \varphi+\lambda^2f(w(\lambda|y-q|))\cdot1_{B_{2d}(q)}\varphi, \\
L_2(S):=\Delta S-\mu^2 S.
\end{array}
\right.
\end{equation}

\begin{theorem}[Theorem B.3 in \cite{LY2}]\label{theorema}
The operator $Q_{q}L_{1,q}$ is an isomorphism from $E_{q}$ to $F_{q}$. Moreover if $\varphi \in E_{q}$ and $g_1\in F_{q}$ satisfies $Q_{q}L_{1,q}(\varphi)=g_1$, then there exists a constant $C>0$ independent of $q$ such that
\begin{equation*}
\|\varphi\|_{L^\infty(\Omega)}+\|\varphi\|_{X_{\alpha,q}}\leq C\ln \lambda \|g_1\|_{Y_{\alpha,q}}.
\end{equation*}
\end{theorem}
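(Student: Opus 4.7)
The isomorphism claim splits into (a) the a priori estimate $\|\varphi\|_{L^\infty(\Omega)} + \|\varphi\|_{X_{\alpha,q}} \le C\ln\lambda \|g_1\|_{Y_{\alpha,q}}$ whenever $\varphi\in E_q$ satisfies $Q_q L_{1,q}\varphi = g_1 \in F_q$, and (b) surjectivity, which will follow by a Fredholm alternative once (a) is known. The structural ingredient driving everything is the nondegeneracy of the rescaled operator $\Delta + f(w)$ on $\R^2$: its only bounded kernel elements are the translation modes $\partial_{x_j}w$, $j=1,2$. The orthogonality conditions $\int Z_{q,j}\varphi = 0$ defining $E_q$ are engineered precisely so that the two translation modes in the blow-up limit are killed.

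\textbf{Blow-up argument for (a).} I argue by contradiction: suppose there are sequences $\lambda_n\to\infty$, $q_n\in\Omega$, $\varphi_n\in E_{q_n}$, $g_n := Q_{q_n}L_{1,q_n}(\varphi_n)\in F_{q_n}$ with $\|\varphi_n\|_{L^\infty}+\|\varphi_n\|_{X_{\alpha,q_n}}=1$ while $\ln\lambda_n\,\|g_n\|_{Y_{\alpha,q_n}}\to 0$. Write $L_{1,q_n}\varphi_n = g_n + \sum_j c_{n,j} Z_{q_n,j}$; testing against $W_{q_n,k}$ and exploiting $\int W_{q_n,k} g_n = 0$ plus the near-orthogonality/diagonal dominance of the matrix $(\int Z_{q_n,j} W_{q_n,k})_{j,k}$ yields $|c_{n,j}| = o(1)$. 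Rescaling $\tilde\varphi_n(z) := \varphi_n(\lambda_n^{-1}z + q_n)$ on $B_{2d\lambda_n}(0)$, the equation becomes
\[
\Delta_z \tilde\varphi_n + f(w(|z|))\,\mathbf{1}_{B_{2d\lambda_n}(0)}\tilde\varphi_n = \lambda_n^{-2}\tilde g_n + \lambda_n^{-2}\sum_j c_{n,j}\tilde Z_{q_n,j}.
\]
The weights $\rho,\bar\rho$ in $X_{\alpha,q_n}, Y_{\alpha,q_n}$ are tailored to yield $C^{1,\alpha}_{\mathrm{loc}}$ control, so one passes to a limit $\tilde\varphi_\infty$ on $\R^2$ solving $\Delta\tilde\varphi_\infty + f(w)\tilde\varphi_\infty = 0$ with $\tilde\varphi_\infty(z) = o(\ln|z|)$ at infinity. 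The Chae--Imanuvilov type nondegeneracy (established for the entire solution \eqref{decay}) together with the limiting orthogonality inherited from $\int Z_{q_n,j}\varphi_n = 0$ forces $\tilde\varphi_\infty\equiv 0$.

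\textbf{Outer region and recovery of $L^\infty$ with $\ln\lambda$ factor.} On $\Omega\setminus B_d(q_n)$ the zero-order potential $\lambda^2 f(w(\lambda|y-q_n|))\mathbf{1}_{B_{2d}(q_n)}$ is exponentially small by the decay in \eqref{decay}, so $\varphi_n$ essentially solves $\Delta\varphi_n = g_n + \sum c_{n,j}Z_{q_n,j}$; combining standard $W^{2,2}$ estimates with the inner convergence gives $\|\varphi_n\|_{L^2(\Omega\setminus B_d(q_n))}+\|\Delta\varphi_n\|_{L^2(\Omega\setminus B_d(q_n))}\to 0$. To upgrade to the $L^\infty$ norm on all of $\Omega$ I would use the Green's representation
\[
\varphi_n(x) = \int_\Omega G(x,y)\bigl[g_n + \sum_j c_{n,j}Z_{q_n,j} - \lambda^2 f(w(\lambda|y-q_n|))\mathbf{1}_{B_{2d}(q_n)}\varphi_n\bigr](y)\,dy,
\]
splitting the inner integral into the core $|y-q_n|\lesssim \lambda_n^{-1}$ and the far field; the logarithmic Green's kernel integrated against a source of essential scale $\lambda_n^{-1}$ produces exactly one factor of $\ln\lambda_n$, matching the claim. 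Combining inner vanishing, outer $L^2$ vanishing and this $\ln\lambda_n$-loss contradicts the normalization $\|\varphi_n\|_{L^\infty}+\|\varphi_n\|_{X_{\alpha,q_n}}=1$, establishing (a).

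\textbf{Fredholm alternative and the main obstacle.} For (b), observe that $Q_q L_{1,q}$ is a compact perturbation of the Laplacian between the chosen weighted spaces (the potential is supported in a bounded region and the projection is finite-rank), hence Fredholm of index zero on the closed subspaces $E_q, F_q$; the injectivity proved in (a) then gives surjectivity. The principal technical obstacle is sharpness of the $\ln\lambda$ loss: it must be extracted cleanly from the matching between the inner region, where the natural length scale is $\lambda^{-1}$, and the outer region, where the 2D Laplacian has a logarithmic Green's function. The weights $\rho(z)=(1+|z|)^{1+\alpha/2}$ and $\bar\rho(z)=(1+|z|)^{-1}(\ln(2+|z|))^{-1-\alpha/2}$ are chosen precisely so that the inner weighted $L^2$ bound absorbs $\tilde g_n$ while leaving only a logarithmic (not polynomial) window for the matching; avoiding a polynomial loss in $\lambda$ at this junction is the delicate point of the proof.
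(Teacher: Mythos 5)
This theorem is cited verbatim from \cite{LY2} (their Theorem B.3); the paper under review gives no proof of its own, so there is nothing internal to compare against. Your sketch reconstructs the standard Lin--Yan blow-up/contradiction route, which is indeed how such reduction lemmas are established, and the architecture is sound: normalize, kill the projection coefficients $c_{n,j}$ by testing against $W_{q_n,k}$, rescale to the entire linearized operator, invoke nondegeneracy plus inherited orthogonality to force the limit to vanish, then combine with an outer estimate and a Green's-function matching to extract the $\ln\lambda$ loss and reach a contradiction. Two small corrections and one caution. First, the nondegeneracy of $\Delta + f(w)$ for the entire Chern--Simons bubble \eqref{decay} is due to Chan--Fu--Lin \cite{CFL}, not Chae--Imanuvilov, who treated the Liouville linearization; the result you need is the radial-ODE nondegeneracy and the identification of the translational kernel for \eqref{limitingpro}. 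Second, the potential $\lambda^2 f(w(\lambda|y-q|))$ on $|y-q|\ge d$ is only \emph{polynomially} small, of order $\lambda^{2-2\mathfrak{M}}$ by the decay $e^{w(r)}\sim r^{-2\mathfrak{M}}$ in \eqref{decay}, not exponentially small as you wrote; this is still ample for the outer argument but the wording should be fixed. Finally, a genuine subtlety you gloss over: in the contradiction argument, the inner limit $\tilde\varphi_\infty\equiv 0$ and the outer $L^2$ smallness do not by themselves contradict $\|\varphi_n\|_{L^\infty}+\|\varphi_n\|_{X_{\alpha,q_n}}=1$, since the normalization mass could escape to intermediate scales $1\ll|z|\ll \lambda_n$. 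This is exactly what the logarithmic weight $\bar\rho$ in $X_{\alpha,q}$ is designed to preclude, and what forces the quantitative Green's-representation step in the inner annulus $\lambda_n^{-1}\lesssim|y-q_n|\lesssim d$; a complete proof must run that estimate carefully rather than treat it as a heuristic for where the $\ln\lambda$ comes from.
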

\begin{theorem}\label{theoremb}
The operator
\begin{equation*}
L_2: W^{2,2}(\Omega)\to L^2(\Omega)
\end{equation*}
is an isomorphism. Moreover, for any $S\in W^{2,2}(\Omega)$ and $g_2\in L^2(\Omega)$ satisfies $L_2(S)=g_2$, there exists a positive constant $C$ independent of $\mu$ such that
\begin{equation}\label{eq19} \left\{\begin{array}{l}
\mu^2\|S\|_{L^2(\Omega)}+\mu\|S\|_{L^\infty(\Omega)}+\mu\|\nabla S\|_{L^2(\Omega)}+\|\partial_i\partial_j S\|_{L^2(\Omega)}\leq C\|g_2\|_{L^2(\Omega)},
\\  \mu^2\|S\|_{L^\infty(\Omega)}\leq C\|g_2\|_{L^\infty(\Omega)}\ \ \textrm{if}\ \ g_2\in L^\infty(\Omega).
\end{array}
\right.
\end{equation}
\end{theorem}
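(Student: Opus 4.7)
\textbf{Proof proposal for Theorem \ref{theoremb}.} The operator $-L_2 = -\Delta+\mu^2 I$ is bounded, symmetric, and strongly coercive on $H^1(\Omega)$ (periodic Sobolev space on the flat torus) since
\[
\int_{\Omega}(-\Delta S+\mu^2 S)\,S\,dx=\int_{\Omega}|\nabla S|^2+\mu^2\int_{\Omega}|S|^2.
\]
Hence by Lax--Milgram, for every $g_2\in L^2(\Omega)$ there is a unique weak $S\in H^1$ with $L_2 S=g_2$, and classical elliptic regularity on the torus upgrades this to $S\in W^{2,2}$. This gives the isomorphism claim, and the inverse is automatically continuous.

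For the quantitative estimates, the first step is to test the equation by $S$ itself: from $\int|\nabla S|^2+\mu^2\int|S|^2=-\int g_2\, S\le\|g_2\|_{L^2}\|S\|_{L^2}$, one directly reads off $\mu^2\|S\|_{L^2}\le\|g_2\|_{L^2}$ and then $\mu\|\nabla S\|_{L^2}\le\|g_2\|_{L^2}$. For the Hessian bound, the periodic boundary conditions make all boundary terms disappear in the integration-by-parts identity, so $\int_{\Omega}|D^2S|^2=\int_{\Omega}|\Delta S|^2$. Writing $\Delta S=g_2+\mu^2 S$ and invoking the previous step gives
\[
\|\partial_i\partial_j S\|_{L^2}\le\|\Delta S\|_{L^2}\le\|g_2\|_{L^2}+\mu^2\|S\|_{L^2}\le 2\|g_2\|_{L^2}.
\]

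The hardest part is the two $L^\infty$ bounds with their sharp $\mu$-dependence, since energy testing does not see pointwise behavior. My plan is to use the periodic Green's function $G_\mu(x,y)$ for $-\Delta+\mu^2$, so that $S(x)=-\int_{\Omega}G_\mu(x,y)g_2(y)\,dy$, and estimate its $L^1$ and $L^2$ norms. Periodizing the full-plane fundamental solution $\tfrac{1}{2\pi}K_0(\mu|x-y|)$ (modified Bessel function) and using the exponential decay $K_0(r)\sim(\pi/2r)^{1/2}e^{-r}$ at infinity together with the logarithmic singularity at the origin, the change of variables $s=\mu r$ yields
\[
\|G_\mu(x,\cdot)\|_{L^1(\Omega)}\le C\mu^{-2},\qquad \|G_\mu(x,\cdot)\|_{L^2(\Omega)}\le C\mu^{-1},
\]
uniformly in $x\in\Omega$ and in $\mu$ large (the lattice tails decay like $e^{-c\mu}$). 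Then H\"older's inequality delivers $\|S\|_{L^\infty}\le \|G_\mu(x,\cdot)\|_{L^2}\|g_2\|_{L^2}\le C\mu^{-1}\|g_2\|_{L^2}$ and $\|S\|_{L^\infty}\le\|G_\mu(x,\cdot)\|_{L^1}\|g_2\|_{L^\infty}\le C\mu^{-2}\|g_2\|_{L^\infty}$, which are exactly the two inequalities stated.

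The main obstacle is keeping the constants independent of $\mu$ in the $L^\infty$-estimates. A clean alternative I would fall back on, if the Green's function bookkeeping becomes cumbersome, is the rescaling $\tilde S(z)=S(\mu^{-1}z)$ on $\mu\Omega$, which converts the equation to $\Delta \tilde S-\tilde S=\mu^{-2}g_2(\mu^{-1}\cdot)$; uniform $\mathbb{R}^2$ Calder\'on--Zygmund and Bessel-potential estimates on this rescaled problem produce the same bounds after undoing the scaling, at the cost of a short verification that periodicity does not destroy the uniformity (which again reduces to the exponential decay of $K_0$).
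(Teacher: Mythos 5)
Your proof is correct and, in fact, more self-contained than the paper's. The paper only proves the $L^2$-based part of \eqref{eq19} by the same energy argument you give (test by $S$, then use $\Delta S = g_2+\mu^2 S$ for the second derivatives); for the isomorphism statement and the two $L^\infty$ bounds it simply cites Theorem 2.4 of \cite{FLL} and supplies no proof at all. You, on the other hand, prove the isomorphism via Lax--Milgram plus periodic elliptic regularity and establish both $L^\infty$ estimates from scratch via the periodized Green's function $G_\mu$ of $-\Delta+\mu^2$ and Young/H\"older, reading the sharp $\mu$-scaling directly off $\|G_\mu(x,\cdot)\|_{L^1}\sim\mu^{-2}$ and $\|G_\mu(x,\cdot)\|_{L^2}\sim\mu^{-1}$. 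This Green's-function route buys you a transparent, self-contained derivation of the $\mu$-powers that the paper leaves to the reference; a small simplification worth noting is that $\|G_\mu(x,\cdot)\|_{L^1(\Omega)}=\mu^{-2}$ follows exactly by integrating $(-\Delta_y+\mu^2)G_\mu(x,\cdot)=\delta_x$ over $\Omega$ and using periodicity, with no need for $K_0$ asymptotics for the $L^1$ norm (they are only needed for the $L^2$ one and the exponentially small lattice tails). The two treatments of the $L^2$ block differ only cosmetically (Cauchy--Schwarz read twice versus Young's inequality), and your observation $\|D^2S\|_{L^2}=\|\Delta S\|_{L^2}$ on the torus gives a cleaner version of the paper's final step.
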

\begin{proof}
It has been shown in  \cite[Theorem 2.4 ]{FLL} that $L_2$ is an isomorphism,  $\mu\| S\|_{L^\infty(\Omega)}\leq C\|g_2\|_{L^2(\Omega)}$, and   $\mu^2\| S\|_{L^\infty(\Omega)}\leq C\|g_2\|_{L^\infty(\Omega)}$  if $g_2\in L^\infty(\Omega)$. In order to complete the proof of Theorem \ref{theorema}, if is enough to prove
\[
\mu^2\|S\|_{L^2(\Omega)}+\mu\|\nabla S\|_{L^2(\Omega)}+\|\partial_i\partial_j S\|_{L^2(\Omega)}\leq C\|g_2\|_{L^2(\Omega)}.
\]
Here we prove the estimate (\ref{eq19}).

Multiply the equation $L_2(S)=g_2$ by $S$ and integrate over $\Omega$, one has
\begin{equation}
\int_{\Omega}|\nabla S|^2+\mu^2\int_{\Omega}S^2=-\int_{\Omega}g_2S.
\end{equation}
By Holder's inequality,
\begin{equation*}
\int_{\Omega}|\nabla S|^2+\mu^2\int_{\Omega}S^2\leq \mu^{-2}\int_{\Omega}g_2^2+\frac{\mu^2}{4}\int_{\Omega}S^2,
\end{equation*}
which implies that
\begin{equation*}
\mu^2\|S\|_{L^2(\Omega)}+\mu\|\nabla S\|_{L^2(\Omega)}\leq C\|g_2\|_{L^2(\Omega)}.
\end{equation*}
Since $\Delta S-\mu^2 S=g_2$, one can get that
\begin{equation}
\|\Delta S\|_{L^2(\Omega)}\leq \mu^2\|S\|_{L^2(\Omega)}+\|g_2\|_{L^2(\Omega)},
\end{equation}
so
\begin{equation*}
\mu^2\|S\|_{L^2(\Omega)}+\mu\|\nabla S\|_{L^2(\Omega)}+\|\partial_i\partial_j S\|_{L^2(\Omega)}\leq C\|g_2\|_{L^2(\Omega)}.
\end{equation*}
Now, we complete the proof of Theorem \ref{theoremb}.

\end{proof}

Next, let us consider the corresponding nonlinear problem. We define an operator $\Psi$ by
\[
\Psi(\varphi,S) = \Big((Q_{q}L_{1,q})^{-1}(Q_{q}g_{1,\lambda,\mu}(\varphi,\hat{S})), \hat{S}\Big),
\]
where $\hat{S}=L^{-1}_2(g_{2,\lambda,\mu}(\varphi,S))$, and a subset $M_{\lambda,\mu}$ of $E_q\times W^{2,2}(\Omega)$ by
\begin{equation*}\begin{aligned}
M_{\lambda,\mu}=\left\{(\varphi,S)\in E_q\times W^{2,2}(\Omega)\ \ \Big| \ \   \|(\varphi,S)\|\leq \frac{(\ln\lambda)^2}{\lambda}  \right\}.
\end{aligned}\end{equation*}
where
\begin{equation*}
\|(\varphi,S)\|:= \|\varphi\|_{L^\infty(\Omega)}+\|\varphi\|_{X_{\alpha,q}}+\frac{(\ln\lambda)^2}{\mu^2}(\mu^2\|S\|_{L^2(\Omega)}+\mu\|S\|_{L^\infty(\Omega)}+\|S\|_{W^{2,2}(\Omega)}).
\end{equation*}
We note that if $(\varphi,S)\in M_{\lambda,\mu}$, then \begin{equation*} \|\varphi\|_{L^\infty(\Omega)}+\|\varphi\|_{X_{\alpha,q}}\leq \frac{(\ln\lambda)^2}{\lambda}, \ \textrm{and}\
 \mu^2\|S\|_{L^2(\Omega)}+\mu\|S\|_{L^\infty(\Omega)}+\|S\|_{W^{2,2}(\Omega)} \leq \frac{\mu^2}{\lambda}.\end{equation*}

To apply contraction argument we need some estimations for the right hand side of (\ref{eq_4}).

\begin{lemma}\label{deltaeUqu}
There exists a constant $C$ such that
\[
\|\Delta \{e^{U_{\lambda,q}+u_0}(1+\varphi)\}\|_{L^2(\Omega)}\leq C\lambda(1+\|\varphi\|_{L^\infty(\Omega)}+\|\varphi\|_{X_{\alpha,q}}).
\]
for any $(\varphi,S)\in M_{\lambda,\mu}$.
\end{lemma}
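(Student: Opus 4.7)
\noindent\textit{Proof proposal.}
The plan is to split $\Omega$ into the inner ball $B_d(q)$, where $V:=e^{U_{\lambda,q}+u_0}$ is essentially supported, and the outer region $\Omega\setminus B_d(q)$, where the outer formula for $U_{\lambda,q}$ together with $w(d\lambda)=-2\mathfrak{M}\ln(d\lambda)+O(1)$ from \eqref{decay} forces $V$, $\nabla V$, $\Delta V$ to be $O(\lambda^{-2\mathfrak{M}})$. On each region I apply the product rule
\[
\Delta\bigl(V(1+\varphi)\bigr) = (1+\varphi)\Delta V + V\Delta\varphi + 2\nabla V\cdot\nabla\varphi,
\]
and bound the three pieces separately after rescaling $z=\lambda(y-q)$ inside.

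On $B_d(q)$ set $\tilde V(z)=V(\lambda^{-1}z+q)$ and $\tilde\varphi(z)=\varphi(\lambda^{-1}z+q)$. The smoothness of $u_0$ and $\gamma(\cdot,q)$ on $B_d(q)$ (as $q\notin Z$) and the asymptotics in \eqref{decay} give the pointwise bounds $\tilde V(z)\le C(1+|z|)^{-2\mathfrak{M}}$ and $|\nabla_z\tilde V|\le C(1+|z|)^{-2\mathfrak{M}-1}$. Using $\Delta u_0\equiv-4\pi\mathfrak{M}$ in $B_d(q)$, $\Delta_y w(\lambda|y-q|)=-\lambda^2 e^w(1-e^w)$, and $\Delta\gamma(\cdot,q)=1$, one computes $\Delta V = V(\Delta U+|\nabla U|^2)$ explicitly for $U=U_{\lambda,q}+u_0$; a direct polar integration gives $\|\Delta V\|_{L^2(B_d(q))}\le C\lambda$, and hence
\[
\|(1+\varphi)\Delta V\|_{L^2(B_d(q))}\le C\lambda\bigl(1+\|\varphi\|_{L^\infty(\Omega)}\bigr).
\]
The change of variables identity
\[
\|V\Delta\varphi\|_{L^2(B_d(q))}^2 = \lambda^2\int_{B_{d\lambda}(0)}\tilde V^2(\Delta_z\tilde\varphi)^2\,dz\le \lambda^2\,\|\tilde V/\rho\|_{L^\infty}^2\,\|\rho\,\Delta_z\tilde\varphi\|_{L^2}^2\le C\lambda^2\|\varphi\|_{X_{\alpha,q}}^2,
\]
uses that $\tilde V/\rho\le C$, which is automatic under the present decay on $\tilde V$.

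The essential difficulty is the cross term $\nabla V\cdot\nabla\varphi$, because $X_{\alpha,q}$ directly controls weighted $L^2$ norms of $\tilde\varphi$ and $\Delta_z\tilde\varphi$ but not of $\nabla_z\tilde\varphi$. After rescaling, and using $|\nabla_z\tilde V|\le C(1+|z|)^{-2\mathfrak{M}-1}$, the required bound reduces to
\[
\int_{B_{d\lambda}(0)}\frac{|\nabla_z\tilde\varphi|^2}{(1+|z|)^{4\mathfrak{M}+2}}\,dz\le C\bigl(1+\|\varphi\|_{L^\infty}^2+\|\varphi\|_{X_{\alpha,q}}^2\bigr).
\]
I would prove this by the weighted integration-by-parts identity
\[
\int\frac{|\nabla\tilde\varphi|^2}{(1+|z|)^{s}}\,dz = -\int\frac{\tilde\varphi\,\Delta\tilde\varphi}{(1+|z|)^{s}}\,dz + s\int\frac{\tilde\varphi\,\nabla\tilde\varphi\cdot z}{|z|(1+|z|)^{s+1}}\,dz + \mathrm{BT},
\]
with $s=4\mathfrak{M}+2$: the second integral is absorbed by a Cauchy--Schwarz $\epsilon$-trick into $\frac12\int|\nabla\tilde\varphi|^2/(1+|z|)^s + C\int\tilde\varphi^2/(1+|z|)^{s+2}$; the first is bounded by $\|\tilde\varphi\bar\rho\|_{L^2}\,\|\rho\,\Delta\tilde\varphi\|_{L^2}\le C\|\varphi\|_{X_{\alpha,q}}^2$, because the auxiliary weight $(1+|z|)^{-s}/(\rho\bar\rho)$ is bounded; the residual $\tilde\varphi^2$-term is controlled by $\|\varphi\|_{L^\infty}^2\int(1+|z|)^{-(s+2)}dz<\infty$; and the boundary contribution $\mathrm{BT}$ on $\partial B_{d\lambda}(0)$ carries a prefactor $(d\lambda)^{-s}$ and is harmless.

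On $\Omega\setminus B_d(q)$, $V$, $\nabla V$, $\Delta V$ are $O(\lambda^{-2\mathfrak{M}})$, with only the harmless factor $|y-p_i|^{2m_i}$ near vortex points, while standard elliptic regularity controls $\|\nabla\varphi\|_{L^2}$ and $\|\Delta\varphi\|_{L^2}$ on this set by a polynomial in $\lambda$ times $\|\varphi\|_{X_{\alpha,q}}+\|\varphi\|_{L^\infty}$; the resulting outer contribution is therefore of order $\lambda^{-2\mathfrak{M}+O(1)}(1+\|\varphi\|_{L^\infty}+\|\varphi\|_{X_{\alpha,q}})$ and is dwarfed by the inner $C\lambda(\cdots)$ bound. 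The anticipated principal obstacle is precisely the cross term $\nabla V\cdot\nabla\varphi$: unlike $\Delta V$ and $V\Delta\varphi$, it forces us to trade gradient information on $\tilde\varphi$ (not present in $X_{\alpha,q}$) against the $\tilde\varphi$- and $\Delta\tilde\varphi$-type bounds that the norm does supply, and the admissibility of the auxiliary weight in the integration-by-parts step is what uses the decay quantitatively.
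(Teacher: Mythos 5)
Your plan is sound and reaches the correct conclusion, but your treatment of the cross term $\nabla V\cdot\nabla\varphi$ is genuinely different from the paper's. The paper covers $B_{d\lambda}(0)$ by unit balls $\{B_1(z_i)\}$ with uniformly bounded overlap, invokes the interior $W^{2,2}$ estimate $\|\nabla\tilde\varphi\|_{L^2(B_1(z_i))}\le C(\|\Delta\tilde\varphi\|_{L^2(B_2(z_i))}+\|\tilde\varphi\|_{L^2(B_2(z_i))})$ on each ball, replaces the weight by its value at the center $z_i$ (up to a factor $2^{2\mathfrak{M}+1}$), and sums; no integration by parts and hence no boundary term appears, at the price of a careful summation over the covering. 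You instead use a single global weighted integration by parts on $B_{d\lambda}(0)$, absorbing the intermediate $\tilde\varphi\nabla\tilde\varphi$ term by an $\epsilon$-trick and bounding the $\tilde\varphi\Delta\tilde\varphi$ term by Cauchy--Schwarz against $\|\tilde\varphi\bar\rho\|_{L^2}\|\rho\Delta\tilde\varphi\|_{L^2}$, which works because the auxiliary weight $(1+|z|)^{-s}/(\rho\bar\rho)=(\ln(2+|z|))^{1+\alpha/2}(1+|z|)^{-s-\alpha/2}$ is bounded for $s=4\mathfrak{M}+2$. This is arguably cleaner, and it buys you a one-shot global estimate instead of a covering argument; what it costs you is the boundary term $\int_{\partial B_{d\lambda}(0)}(d\lambda)^{-s}\tilde\varphi\,\partial_\nu\tilde\varphi\,dS$, which you dismiss in one line. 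It is harmless, but you do need to say why: $X_{\alpha,q}$ does not give pointwise control of $\partial_\nu\tilde\varphi$ on a fixed circle, so you should either pick a good radius $r\in[d\lambda,\tfrac{3}{2}d\lambda]$ by averaging (using that $\|\nabla\tilde\varphi\|_{L^2}$ over the annulus is controlled by $\|\varphi\|_{X_{\alpha,q}}+\|\varphi\|_{L^\infty}$ via local elliptic estimates, as in the paper's annular analysis) or use a cutoff instead of a sharp boundary. With that one clarification your argument is complete and matches the paper's bound $C\lambda(1+\|\varphi\|_{L^\infty}+\|\varphi\|_{X_{\alpha,q}})$; the remaining pieces (the $\Delta V$, $V\Delta\varphi$, and outer-region terms) agree with the paper's treatment, with your $\Delta V = V(\Delta U + |\nabla U|^2)$ expansion simply combining the paper's terms $I$ and $IV$.
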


\begin{proof}
We have
\begin{equation}\begin{aligned}
\Delta \{e^{U_{\lambda,q}+u_0}(1+\varphi)\}=&e^{U_{\lambda,q}+u_0}(1+\varphi)\Delta (U_{\lambda,q}+u_0) + e^{U_{\lambda,q}+u_0}\Delta\varphi\\
&\quad +2 e^{U_{\lambda,q}+u_0}\nabla (U_{\lambda,q}+u_0) \cdot\nabla\varphi + e^{U_{\lambda,q}+u_0}(1+\varphi)|\nabla(U_{\lambda,q}+u_0)|^2.
\end{aligned}\end{equation}

First, we consider the $L^2$ norm of $\Delta \{e^{U_{\lambda,q}+u_0}(1+\varphi)\}$ in $B_d(q)$.

In $B_d(q)$, we get that
\begin{equation}\begin{aligned}\label{laplaceUq}
\Delta U_{\lambda,q}=&\Delta w_{\lambda,q}(y) +4\pi \mathfrak{M}(1-\theta)\Delta\gamma(y,q)\\
=&\quad-\lambda^2 F(w_{\lambda,q}(y)) +4\pi \mathfrak{M}(1-\theta)
\end{aligned}\end{equation}
and
\[
\Delta u_0=-4\pi \mathfrak{M}.
\]

Moreover, since $\gamma(y,q)$ and $u_0$ are smooth functions in $B_d(q)$, we get that
\[
\frac{\partial U_{\lambda,q}}{\partial y_j}=\lambda w_{\lambda,q}'(y)\frac{y_j-q_j}{|y-q|} +4\pi \mathfrak{M}(1-\theta)\frac{\partial \gamma(y,q)}{\partial y_j}, \quad j=1,2,
\]
and
\[
\frac{\partial u_0}{\partial y_j}=O(1), \quad j=1,2.
\]

By the definitions of $U_{\lambda,q}$ and $u_0$, we obtain that
\[
e^{U_{\lambda,q}}=e^{w_{\lambda,q}(y)+O(1)} \quad \textrm{and} \quad e^{u_0}=O(1) \quad \textrm{in} \ B_d(q).
\]

Then,
\begin{equation}\begin{aligned}
&\|\Delta \{e^{U_{\lambda,q}+u_0}(1+\varphi)\}\|_{L^2(B_d(q))}\\
&\leq  C\{\|e^{w_{\lambda,q}(y)}(1+\varphi)\{\lambda^2F(w_{\lambda,q}(y))+4\pi \mathfrak{M} \theta\}\|_{L^2(B_d(q))}+\|e^{w_{\lambda,q}(y)}\Delta \varphi\|_{L^2(B_d(q))}\\
&\quad +\|e^{w_{\lambda,q}(y)}(\lambda w_{\lambda,q}'(y)+1)|\nabla \varphi|\|_{L^2(B_d(q))}+\|e^{w_{\lambda,q}(y)}(1+\varphi)(\lambda^2(w_{\lambda,q}')^2+1)\|_{L^2(B_d(q))}\}\\
&=: C\{ I + II + III + IV\}.
\end{aligned}\end{equation}

Let
\begin{equation}\label{scaling}
z:=\lambda(y-q) \quad \textrm{and} \quad \tilde{u}(z):=u(\lambda^{-1}z+q)
\end{equation}
for any function $u$. Then,
\begin{equation}
\begin{aligned}\label{BdL2}
I&=\|e^{w_{\lambda,q}(y)}(1+\varphi)\{\lambda^2F(w_{\lambda,q}(y))+4\pi \mathfrak{M}\theta\}\|_{L^2(B_d(q))}\\
&=\lambda^{-1}\|e^{w(z)}(1+\tilde{\varphi})\{\lambda^2F(w(z))+4\pi \mathfrak{M} \theta\}\|_{L^2(B_{d\lambda}(0))}\\
&\leq C\lambda\|e^{w(z)}(1+\tilde{\varphi})\|_{L^2(B_{d\lambda}(0))}\leq C\lambda(1+\|\varphi\|_{L^\infty(\Omega)}),
\end{aligned}\end{equation}
where $\tilde{\varphi}(z):=\varphi(\lambda^{-1}z+q)$ as defined in (\ref{scaling}). In the last inequality in (\ref{BdL2}), we used the decay rate of $w(z)$ in (\ref{decay}).
From (\ref{decay}), we also have
\[
II=\|e^{w_{\lambda,q}(y)}\Delta \varphi\|_{L^2(B_d(q))}\leq \lambda \|e^{w(z)}\Delta_z \tilde{\varphi}(z)\|_{L^2(B_{d\lambda}(0))}
\leq C\lambda \|(\Delta \tilde{\varphi})\rho\|_{L^2(B_{d\lambda}(0))}.
\]
We can rewrite $III$ by
\begin{equation}
\begin{aligned}
III&=\|e^{w_{\lambda,q}(y)}(\lambda w_{\lambda,q}'+1)|\nabla \varphi|\|_{L^2(B_d(q))}\\
&\leq \lambda \|e^{w(z)} w'(z)|\nabla_z \tilde{\varphi}|\|_{L^2(B_{d\lambda}(0))}+ \|e^{w(z)}|\nabla_z \tilde{\varphi}|\|_{L^2(B_{d\lambda}(0))}.
\end{aligned}\end{equation}
Since $|e^{w(z)}w'(z)|\leq C(1+|z|)^{-2\mathfrak{M}-1}$,
\[
\lambda \|e^{w(z)} w'(z)|\nabla_z \tilde{\varphi}|\|_{L^2(B_{d\lambda}(0))}
\leq C\lambda\|(1+|z|)^{-2\mathfrak{M}-1}|\nabla_z \tilde{\varphi}|\|_{L^2(B_{d\lambda}(0))}.
\]
There exist finite number of points $\{z_i\}$ such that $B_{d\lambda}(0)\subseteq \cup_{i}B_1(z_i)\subseteq \cup_{i}B_2(z_i)\subseteq B_{2d\lambda}(0)$ and for any index $i$, $|\{z_j\mid B_1(z_i)\cap B_1(z_j)\neq\emptyset\}|\leq c_1,$ where a positive constant $c_1$ is independent of $\lambda$. Then, we have
\begin{equation}\begin{aligned}\label{gradw2}
\lambda\|(1+|z|)^{-2\mathfrak{M}-1}|\nabla_z \tilde{\varphi}|\|_{L^2(B_1(z_i))}
\leq
\lambda 2^{2\mathfrak{M}+1}
\|(1+|z_i|)^{-2\mathfrak{M}-1}|\nabla_z \tilde{\varphi}|\|_{L^2(B_1(z_i))}.
\end{aligned}\end{equation}
Moreover, there exists some constant $c_2$ from $W^{2,2}$ estimate satisfying
\begin{equation}\begin{aligned}\label{gradw20}
&\|\nabla_z \tilde{\varphi}\|_{L^2(B_1(z_i))}\\
&\leq c_2(\|\Delta \tilde{\varphi}\|_{L^2(B_2(z_i))}+\|\tilde{\varphi}\|_{L^2(B_2(z_i))})\\
&\leq 3^{1+\frac{\alpha}{2}}c_2\Big((1+|z_i|)^{-1-\frac{\alpha}{2}}\|\rho\Delta \tilde{\varphi}\|_{L^2(B_2(z_i))}+(1+|z_i|)\{\ln (2+|z_i|)\}^{1+\frac{\alpha}{2}}\|\tilde{\varphi}\bar{\rho}\|_{L^2(B_2(z_i))}\Big).
\end{aligned}\end{equation}
Therefore, it follows that
\begin{equation}\begin{aligned}\label{gradw21}
&\|e^{w_{\lambda,q}(y)}\lambda w_{\lambda,q}'|\nabla \varphi|\|_{L^2(B_d(q))}\\
&\leq 2^{2\mathfrak{M}+1}3^{1+\frac{\alpha}{2}}c_2\lambda \sum_{i}
\Big((1+|z_i|)^{-2\mathfrak{M}-2-\frac{\alpha}{2}}\|\rho\Delta \tilde{\varphi}\|_{L^2(B_2(z_i))}+(1+|z_i|)^{-2\mathfrak{M}}\{\ln (2+|z_i|)\}^{1+\frac{\alpha}{2}}\|\tilde{\varphi}\bar{\rho}\|_{L^2(B_2(z_i))}\Big)\\
&\leq 2^{2\mathfrak{M}+1}3^{1+\frac{\alpha}{2}}c_1c_2\lambda(\|\rho\Delta \tilde{\varphi}\|_{L^2(B_{2d\lambda}(0))}+\|\tilde{\varphi}\bar{\rho}\|_{L^2(B_{2d\lambda}(0))})\\
&\leq C\lambda \|\varphi\|_{X_{\alpha,q}}.
\end{aligned}\end{equation}
Similarly, we have
\[
\|e^{w(z)}|\nabla_z \tilde{\varphi}|\|_{L^2(B_{d\lambda}(0))}\leq C \|\varphi\|_{X_{\alpha,q}}.
\]
From the decay rate of $e^{w(z)}w'(z)$, we have
\begin{equation}\begin{aligned}
IV&=\|e^{w_{\lambda,q}(y)}(1+\varphi)(\lambda^2(w'_{\lambda,q})^2+1)\|_{L^2(B_d(q))}\\
&\leq \lambda \|e^{w(z)}(1+\tilde{\varphi})(w'(z))^2\|_{L^2(B_{d\lambda}(0))}+\lambda^{-1} \|e^{w(z)}(1+\tilde{\varphi})\|_{L^2(B_{d\lambda}(0))}\\
&\leq C\lambda (1+\|\varphi\|_{L^\infty(B_{d}(q))}).
\end{aligned}\end{equation}

\medskip
\noindent
Next, we consider $\|\Delta \{e^{U_{\lambda,q}+u_0}(1+\varphi)\}\|_{L^2(\Omega\setminus B_d(q))}$.
In $\Omega\setminus B_d(q)$, we have
\[\Delta U_{\lambda,q}=4\pi \mathfrak{M}(1-\theta), \ \ \Delta u_0=-4\pi \mathfrak{M}+4\pi\sum^n_{i=1}m_i\delta_{p_i},
\]
and
\[\nabla U_{\lambda,q}=4\pi \mathfrak{M}(1-\theta)\nabla G(y,q),\ \ \nabla u_0=-4\pi\sum^n_{i=1} m_i\nabla G(y,p_i).\]

We see that  $G(y,q)$ is a smooth function in $\Omega\setminus B_d(q)$ and thus
\begin{equation}\begin{aligned}\label{Bd1}
&| \Delta U_{\lambda,q}|+|\nabla U_{\lambda,q}|+|e^{U_{\lambda,q}}|\\&=| 4\pi \mathfrak{M}(1-\theta)|+|4\pi \mathfrak{M}(1-\theta)\nabla G(y,q)|+ e^{U_{\lambda,q}}=O(1) \ \ \textrm{in}\ \Omega\setminus B_d(q).
\end{aligned}\end{equation}
We  also see that
\[
u_0(y)=2m_i\ln|y-p_i|-4\pi m_i\gamma(y,p_i)-4\pi \sum_{i\neq j} m_j G(y,p_j), \quad \textrm{in} \ B_d(p_i),
\]
where $m_i$ is the multiplicity of $p_i$.
Since $\gamma(y,p_i)$ and $G(y,p_j)$, $j\neq i$,  are smooth functions in $B_d(p_i)$, we have   $e^{u_0}(y)=O(|y-p_i|^{2m_i})\in C^2(B_d(p_i))$.  Obviously, we also have  $   e^{u_0} \in C^2(\Omega\setminus \cup_{i=1}^n B_d(p_i))$. From this observation, we see that
\begin{equation}\label{Bd2}
|\Delta e^{u_0}|+|\nabla e^{u_0}|+|e^{u_0}|=O(1)\ \ \textrm{in}\ \Omega\setminus B_d(q).
\end{equation}
We also see that
\begin{equation}\begin{aligned}\label{detail_exp}
&\Delta \{e^{U_{\lambda,q}+u_0}(1+\varphi)\}\\
&=\left\{e^{U_{\lambda,q}}\left(\Delta U_{\lambda,q}+|\nabla U_{\lambda,q}|^2\right)e^{u_0}+2e^{U_{\lambda,q}}\nabla U_{\lambda,q}\cdot\nabla e^{u_0}+e^{U_{\lambda,q}}\Delta e^{u_0}\right\}(1+\varphi)\\
&\quad + 2\left(e^{U_{\lambda,q}+u_0}\nabla U_{\lambda,q}+ e^{U_{\lambda,q}}\nabla e^{u_0}\right)\cdot\nabla\varphi + e^{U_{\lambda,q}+u_0}\Delta \varphi.
\end{aligned}\end{equation}
We are going to estimate  $\|\Delta \{e^{U_{\lambda,q}+u_0}(1+\varphi)\}\|_{L^2(\Omega\setminus B_d(q))}$ by dividing the region  $\Omega\setminus B_d(q)$ into  $\Omega\setminus  B_{\frac{3}{2}d}(q)$ and $ B_{\frac{3}{2}d}(q)\setminus B_d(q)$.
\\
Firstly, the estimations \eqref{Bd1}-\eqref{detail_exp} and $W^{2,2}$-estimation imply
\begin{equation}\begin{aligned}
&\|\Delta \{e^{U_{\lambda,q}+u_0}(1+\varphi)\}\|_{L^2(\Omega\setminus  B_{\frac{3}{2}d}(q))}
\\
&\leq C(1+\|\varphi\|_{L^2(\Omega\setminus  B_{\frac{3}{2}d}(q))}+\|\nabla\varphi\|_{L^2(\Omega\setminus  B_{\frac{3}{2}d}(q))}+\|\Delta\varphi\|_{L^2(\Omega\setminus  B_{\frac{3}{2}d}(q))})\\
&\leq C(1+\|\varphi\|_{L^2(\Omega\setminus  B_{d}(q))}+\|\Delta\varphi\|_{L^2(\Omega\setminus  B_{d}(q))})\\&\le C(1+\|\varphi\|_{X_{\alpha,q}}).
\end{aligned}\end{equation}
Secondly, in $ B_{\frac{3}{2}d}(q)\setminus B_{d}(q)$,  the estimations \eqref{Bd1}-\eqref{detail_exp} again imply
\begin{equation*}\begin{aligned}
&\|\Delta \{e^{U_{\lambda,q}+u_0}(1+\varphi)\}\|_{L^2( B_{\frac{3}{2}d}(q)\setminus B_{d}(q))}\\&\le \|\left(e^{U_{\lambda,q}}\left(\Delta U_{\lambda,q}+|\nabla U_{\lambda,q}|^2\right)e^{u_0}+2e^{U_{\lambda,q}}\nabla U_{\lambda,q}\cdot\nabla e^{u_0}+e^{U_{\lambda,q}}\Delta e^{u_0}\right)(1+\varphi)\|_{L^2( B_{\frac{3}{2}d}(q)\setminus B_{d}(q))}\\
&\quad + \|2\left(e^{U_{\lambda,q}+u_0}\nabla U_{\lambda,q}+ e^{U_{\lambda,q}}\nabla e^{u_0}\right)\cdot\nabla\varphi\| _{L^2( B_{\frac{3}{2}d}(q)\setminus B_{d}(q))}
+\| e^{U_{\lambda,q}+u_0}\Delta \varphi\|_{L^2( B_{\frac{3}{2}d}(q)\setminus B_{d}(q))}
\\&\le C(1+\|\varphi\|_{L^2(\Omega\setminus  B_{ d}(q))}+ \|\Delta\varphi\|_{L^2(\Omega\setminus  B_{ d}(q))}) +2 \|e^{U_{\lambda,q}+u_0}\nabla\left( U_{\lambda,q}+   {u_0}\right)\cdot\nabla\varphi\| _{L^2( B_{\frac{3}{2}d}(q)\setminus B_{d}(q))}.
\end{aligned}\end{equation*}
In order to estimate $\| e^{U_{\lambda,q}+u_0}\nabla\left( U_{\lambda,q}+   {u_0}\right)\cdot\nabla\varphi\| _{L^2( B_{\frac{3}{2}d}(q)\setminus B_{d}(q))}$, we note that  the estimation \eqref{decay} yields
\[
e^{U_{\lambda,q}+u_0}\leq C\lambda^{-2\mathfrak{M}}, \quad \textrm{in} \  B_{\frac{3}{2}d}(q)\setminus B_{d}(q).
\]
From the definition of $U_{\lambda,q}$ and $u_0$, it is clear that $\nabla\left( U_{\lambda,q}+   {u_0}\right)$ are uniformly bounded in $ B_{\frac{3}{2}d}(q)\setminus B_{d}(q)$.
Therefore, we obtain that
\begin{equation}\begin{aligned}
&\|e^{U_{\lambda,q}+u_0}\nabla(U_{\lambda,q}+u_0)\cdot\nabla\varphi\|_{L^2(B_{\frac{3}{2}d}(q)\setminus B_{d}(q))}\\
&\leq C\lambda^{-2\mathfrak{M}}\|\nabla_y\varphi(y)\|_{L^2(B_{\frac{3}{2}d}(q)\setminus B_{d}(q))}\leq C\lambda^{-2\mathfrak{M}}\|\nabla\tilde{\varphi}\|_{L^2(B_{\frac{3}{2}d\lambda}(0)\setminus B_{d\lambda}(0))}\\
&\leq C\lambda^{-2\mathfrak{M}}\sum_{i}\|\nabla\tilde{\varphi}\|_{L^2(B_1(z_i))}\\
&\leq C\lambda^{-2\mathfrak{M}}\sum_{i}(\|\Delta\tilde{\varphi}\|_{L^2(B_2(z_i))} +\|\tilde{\varphi}\|_{L^2(B_2(z_i))})
\end{aligned}\end{equation}
where $\{B_1(z_i)\}$ is finite covering of $B_{\frac{3}{2}d\lambda}(0)\setminus B_{d\lambda}(0)$ as in the calculus in $III$.
Since
\[
\|\Delta\tilde{\varphi}\|_{L^2(B_2(z_i))}\leq (1+|z_i|)^{-1-\frac{\alpha}{2}}\|\Delta\tilde{\varphi}\rho\|_{L^2(B_2(z_i))}\leq C\lambda^{-1-\frac{\alpha}{2}}\|\Delta\tilde{\varphi}\rho\|_{L^2(B_2(z_i))}
\]
and
\[
\|\tilde{\varphi}\|_{L^2(B_2(z_i))}\leq (1+|z_i|)\{\ln(2+|z_i|)\}^{1+\frac{\alpha}{2}}\|\tilde{\varphi}\bar{\rho}\|_{L^2(B_2(z_i))}\leq C\lambda^{1+\frac{\alpha}{2}}\|\tilde{\varphi}\bar{\rho}\|_{L^2(B_2(z_i))},
\]
we obtain
\begin{equation}\begin{aligned}
&\|e^{U_{\lambda,q}+u_0}\nabla(U_{\lambda,q}+u_0)\cdot\nabla\varphi\|_{L^2(B_{\frac{3}{2}d}(q)\setminus B_{d}(q))}\\
&\leq C\lambda^{-2\mathfrak{M}+1+\frac{\alpha}{2}}(\|\Delta\tilde{\varphi}\rho\|_{L^2(B_{2d\lambda}(0))} +\|\tilde{\varphi}\bar{\rho}\|_{L^2(B_{2d\lambda}(0))})\\
&\leq C\lambda^{-2\mathfrak{M}+1+\frac{\alpha}{2}}\|\varphi\|_{X_{\alpha,q}}.
\end{aligned}\end{equation}
Therefore, the proof is complete.

\end{proof}


\begin{proposition}\label{contractionmap}
There exists a fixed point $({\varphi}_q, {S}_q)\in M_{\lambda,\mu}$ of the operator $\Psi$.
\end{proposition}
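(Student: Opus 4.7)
The strategy is to apply the Banach fixed point theorem to $\Psi$ on $M_{\lambda,\mu}$; the plan is to verify (i) $\Psi$ maps $M_{\lambda,\mu}$ into itself and (ii) $\Psi$ is a contraction on $M_{\lambda,\mu}$.

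For (i), I would first estimate $\|g_{1,\lambda,\mu}(\varphi,\hat{S})\|_{Y_{\alpha,q}}$ and $\|g_{2,\lambda,\mu}(\varphi,S)\|_{L^2(\Omega)}$ for $(\varphi,S)\in M_{\lambda,\mu}$. For $g_{1,\lambda,\mu}$, the dominant part is the ``approximate solution error'' $-\Delta U_{\lambda,q}-\lambda^2 F(U_{\lambda,q}+u_0)+4\pi\mathfrak{M}$: inside $B_d(q)$ one uses the identity $\Delta U_{\lambda,q}=-\lambda^2 F(w_{\lambda,q})+4\pi\mathfrak{M}(1-\theta)$ from \eqref{laplaceUq} together with a Taylor expansion of $F$ about $w_{\lambda,q}$ in the difference $U_{\lambda,q}+u_0-w_{\lambda,q}=O(|y-q|)$, and the rapid decay of $f(w)$ from \eqref{decay}; in $\Omega\setminus B_d(q)$ the exponential smallness $e^{U_{\lambda,q}+u_0}\lesssim\lambda^{-2\mathfrak{M}}$ (using $\mathfrak{M}>2$) makes the nonlinear term negligible. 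After the rescaling $z=\lambda(y-q)$ and using the weight $\rho$, this gives $\|g_{1,\lambda,\mu}(0,0)\|_{Y_{\alpha,q}}\lesssim \lambda^{-1}$. Combined with Theorem \ref{theorema}, whose inverse loses only a $\ln\lambda$ factor, one obtains a bound of order $\ln\lambda/\lambda$ for the first component of $\Psi(0,0)$, well below $(\ln\lambda)^2/\lambda$.

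For $g_{2,\lambda,\mu}$, a Taylor expansion of $e^{-\frac{\lambda}{\mu}h(\varphi,S)}$ converts the coupled $\mu^2$- and $\lambda\mu$-prefactored terms into $-\Delta\{e^{U_{\lambda,q}+u_0}(1+\varphi)\}-\lambda\mu f(U_{\lambda,q}+u_0)$ up to remainders bounded by $O(\lambda^2)$ times exponentially decaying functions of $w$. Lemma \ref{deltaeUqu} controls the Laplacian term and a direct rescaling controls $\lambda\mu f(U_{\lambda,q}+u_0)$, yielding $\|g_{2,\lambda,\mu}(0,0)\|_{L^2}\lesssim \mu$. By Theorem \ref{theoremb}, $\hat{S}=L_2^{-1}(g_{2,\lambda,\mu})$ then satisfies $\mu^2\|\hat{S}\|_{L^2}+\mu\|\hat{S}\|_{L^\infty}+\|\hat{S}\|_{W^{2,2}}\lesssim \mu$, so the corresponding contribution to $\|\Psi(0,0)\|$ is of order $(\ln\lambda)^2/\mu\ll(\ln\lambda)^2/\lambda$ under $\lambda\ll\mu$. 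This closes (i). For the general point $(\varphi,S)\in M_{\lambda,\mu}$ the same strategy applies since the additional nonlinear contributions only add lower-order terms.

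For (ii), I would compare $g_{j,\lambda,\mu}(\varphi_1,S_1)$ with $g_{j,\lambda,\mu}(\varphi_2,S_2)$ for two pairs in $M_{\lambda,\mu}$. The terms $1+\varphi-e^{\varphi-\frac{\lambda}{\mu}h}$, $e^{\varphi-\frac{\lambda}{\mu}h}-1$, and the cross-term with $e^{U_{\lambda,q}+u_0}\varphi+S$ are at least quadratic in $(\varphi,S,\frac{\lambda}{\mu}h)$, so their Lipschitz constants inherit the smallness $(\ln\lambda)^2/\lambda\to 0$. Combined with the $\ln\lambda$-loss from $(Q_qL_{1,q})^{-1}$ in Theorem \ref{theorema} and the large prefactor $\mu^2/\lambda$ arising from Theorem \ref{theoremb}, this yields a contraction constant strictly less than $1$ precisely under the assumption $(\ln\lambda)\lambda^2\ll\mu$, which is exactly the hypothesis imposed in Theorem \ref{thm1}. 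A standard application of the Banach fixed point theorem then gives the desired $(\varphi_q,S_q)\in M_{\lambda,\mu}$.

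The main technical obstacle is the careful bookkeeping of the cross-terms involving $h(\varphi,S)=e^{U_{\lambda,q}+u_0}(1+\varphi)+S$: each occurrence must be tracked through the Taylor expansions so that the large prefactors $\lambda^2$, $\lambda\mu$ and $\mu^2$ combine with the small factor $\lambda/\mu$ into controllable quantities. The most delicate point is the near-cancellation at leading order in $g_{2,\lambda,\mu}$, where $\mu^2 e^{U_{\lambda,q}+u_0}(1-e^{-\frac{\lambda}{\mu}h})$ must be balanced against the $\lambda\mu$-prefactored expansions of $e^{2U_{\lambda,q}+2u_0+\varphi-\frac{\lambda}{\mu}h}(1+\varphi)$ and $(S-1)e^{U_{\lambda,q}+u_0+\varphi-\frac{\lambda}{\mu}h}$ so that only the manageable $\lambda\mu f(U_{\lambda,q}+u_0)$ survives at the top order; otherwise the $\mu^2$ scaling would destroy the required estimate.
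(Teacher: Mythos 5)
Your proposal follows essentially the same route as the paper: Step~1 verifies that $\Psi$ maps $M_{\lambda,\mu}$ into itself via the estimates on $g_{1,\lambda,\mu}$ and $g_{2,\lambda,\mu}$ (using Lemma~\ref{deltaeUqu}, Theorem~\ref{theorema}, Theorem~\ref{theoremb}), and Step~2 verifies the Lipschitz estimates on differences to obtain a contraction, after which the Banach fixed point theorem applies; your target orders $\|g_{1,\lambda,\mu}\|_{Y_{\alpha,q}}=O(\lambda^{-1})$ and $\|g_{2,\lambda,\mu}\|_{L^2(\Omega)}=O(\mu)$ match the paper's~\eqref{estimateg1} and~\eqref{estimateg2}.

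One caveat on your closing paragraph: the claim that a near-cancellation between the $\mu^2$-prefactored and the $\lambda\mu$-prefactored pieces of $g_{2,\lambda,\mu}$ is what saves the estimate, and that ``otherwise the $\mu^2$ scaling would destroy'' it, is a misdiagnosis. The paper bounds the two pieces term by term, and each is already $O(\mu)$ in $L^2(\Omega)$ on its own: for the $\mu^2$-prefactored piece, Taylor's theorem gives $1+\varphi-e^{\varphi-\frac{\lambda}{\mu}h}=\frac{\lambda}{\mu}h+O(\varphi^2)+\cdots$, which cuts the prefactor from $\mu^2$ down to $\lambda\mu$, and then $\|e^{U_{\lambda,q}+u_0}\|_{L^2(\Omega)}=O(\lambda^{-1})$ brings the product to $O(\mu)$; the $\lambda\mu$-prefactored pieces are handled by the same $L^2$-decay of $e^{U_{\lambda,q}+u_0}$. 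The two mechanisms are additive, not subtractive. No cancellation is needed or exploited, and trying to isolate a residual $-\lambda\mu f(U_{\lambda,q}+u_0)$ before estimating would only complicate the bookkeeping without improving the order.
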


\begin{proof}
In order to prove Proposition \ref{contractionmap}, it is enough to show that   $\Psi$ is a contraction map
from $M_{\lambda,\mu}$ to $M_{\lambda,\mu}$ due to the  contraction mapping
theorem.
We are going to prove that $\Psi$ is a contraction map
from $M_{\lambda,\mu}$ to $M_{\lambda,\mu}$ with the following two steps.

\medskip
\noindent
\textbf{Step 1.} We claim that $\Psi(\varphi,S)\in M_{\lambda,\mu}$ for any $(\varphi,S)\in M_{\lambda,\mu}$. First, we consider $\|g_{2,\lambda,\mu}(\varphi,S)\|_{L^2(\Omega)}$.\\
From the definition of (\ref{f}), we note that
\begin{equation}\begin{aligned}
\|\frac{\lambda}{\mu}h(\varphi,S)\|_{L^\infty(\Omega)}\leq
&\frac{\lambda}{\mu}\|e^{U_{\lambda,q}+u_0}(1+\varphi)\|_{L^\infty(\Omega)}+\frac{\lambda}{\mu}\|S\|_{L^\infty(\Omega)}\\
\leq& \frac{C\lambda}{\mu}(1+\|\varphi\|_{L^\infty(\Omega)})+\frac{\lambda}{\mu}\|S\|_{L^\infty(\Omega)}.
\end{aligned}\end{equation}
This implies that for any $(\varphi,S)\in M_{\lambda,\mu}$, $\|\frac{\lambda}{\mu}h(\varphi,S)\|_{L^\infty(\Omega)}=O(1)$.

From the definition of $U_{\lambda,q}$ and $u_0$, $e^{U_{\lambda,q}}=e^{w_{\lambda,q}(y)+O(1)}$ and $u_0=O(1)$ in $B_d(q)$. This implies that
\[
\|e^{U_{\lambda,q}+u_0}\|_{L^2(B_d(q))}=C\lambda^{-1}\|e^{w(z)}\|_{L^2(B_{d\lambda}(0))}\leq C\lambda^{-1}.
\]
In $\Omega\setminus B_d(q)$, $e^{U_{\lambda,q}}=e^{-2\mathfrak{M}\ln(d\lambda)+O(1)}$ and $e^{u_0}=O(1)$. This implies that
\[
\|e^{U_{\lambda,q}+u_0}\|_{L^2(\Omega\setminus B_d(q))}=O(\lambda^{-2\mathfrak{M}}).
\]
It follows that $\|e^{U_{\lambda,q}+u_0}\|_{L^2(\Omega)}\leq C\lambda^{-1}$ and $\|e^{U_{\lambda,q}+u_0}\|_{L^\infty(\Omega)}=O(1)$.

From (\ref{g1g2}), we get that
\begin{equation}\begin{aligned}
\|g_{2,\lambda,\mu}\|_{L^2(\Omega)}\leq &\|\Delta\{e^{U_{\lambda,q}+u_0}(1+\varphi)\}\|_{L^2(\Omega)}+\|\mu^2e^{U_{\lambda,q}+u_0}(1+\varphi-e^{\varphi-\frac{\lambda}{\mu}h(\varphi,S)})\|_{L^2(\Omega)}\\
&\quad +\lambda\mu\|e^{U_{\lambda,q}+u_0+\varphi
-\frac{\lambda}{\mu}h(\varphi,S)}
\{e^{U_{\lambda,q}+u_0}(1+\varphi)+S-1\}\|_{L^2(\Omega)}.
\end{aligned}\end{equation}

Using Taylor's Theorem, we see that for some $0\leq\sigma\leq1$,
\begin{equation}\begin{aligned}
&\|\mu^2e^{U_{\lambda,q}+u_0}(1+\varphi-e^{\varphi-\frac{\lambda}{\mu}h(\varphi,S)})\|_{L^2(\Omega)}\\
&\leq\mu^2\|e^{U_{\lambda,q}+u_0}\{\frac{\lambda}{\mu}S+\frac{\lambda}{\mu}e^{U_{\lambda,q}+u_0}(1+\varphi)
-\frac{1}{2}e^{\sigma\varphi-\frac{\sigma\lambda}{\mu}h(\varphi,S)}(\varphi-\frac{\lambda}{\mu}h(\varphi,S))^2\}\|_{L^2(\Omega)}\\
&\leq\lambda\mu\|e^{U_{\lambda,q}+u_0}S\|_{L^2(\Omega)}+\lambda\mu\|e^{U_{\lambda,q}+u_0}\|_{L^2(\Omega)}(1+\|\varphi\|_{L^\infty(\Omega)})+
C\mu^2\|e^{U_{\lambda,q}+u_0}(\varphi-\frac{\lambda}{\mu}h(\varphi,S))^2\|_{L^2(\Omega)}\\
&\leq\lambda\mu\|S\|_{L^2(\Omega)}+C\mu(1+\|\varphi\|_{L^\infty(\Omega)})+
C\mu^2\|\varphi\|_{L^\infty(\Omega)}^2\|e^{U_{\lambda,q}+u_0}\|_{L^2(\Omega)}+C\lambda^2\|S\|_{L^\infty(\Omega)}\|S\|_{L^2(\Omega)}.
\end{aligned}\end{equation}

We also obtain
\begin{equation}\begin{aligned}
&\lambda\mu\|e^{U_{\lambda,q}+u_0+\varphi-\frac{\lambda}{\mu}h(\varphi,S)}\{e^{U_{\lambda,q}+u_0}(1+\varphi)+S-1\}\|_{L^2(\Omega)}\\
&\leq\lambda\mu\Big(\|e^{U_{\lambda,q}+u_0+\varphi-\frac{\lambda}{\mu}h(\varphi,S)}S\|_{L^2(\Omega)}+
\|e^{U_{\lambda,q}+u_0+\varphi-\frac{\lambda}{\mu}h(\varphi,S)}\{e^{U_{\lambda,q}+u_0}(1+\varphi)-1\}\|_{L^2(\Omega)}\Big)\\
&\leq\lambda\mu\|S\|_{L^2(\Omega)}+
\mu(1+\|\varphi\|_{L^\infty(\Omega)}).
\end{aligned}\end{equation}

From Lemma \ref{deltaeUqu}, we obtain that
\begin{equation}\begin{aligned}
\|g_{2,\lambda,\mu}\|_{L^2(\Omega)}
\leq \mu(1+\|\varphi\|_{L^\infty(\Omega)})+\lambda\|\varphi\|_{X_{\alpha,q}}+
\frac{\mu^2}{\lambda}\|\varphi\|_{L^\infty(\Omega)}^2+\lambda\|S\|_{L^2(\Omega)}(\mu+\lambda\|S\|_{L^\infty(\Omega)}).
\end{aligned}\end{equation}

Therefore,
\begin{equation}\label{estimateg2}
\begin{aligned}
\|g_{2,\lambda,\mu}(\varphi,S)\|_{L^2(\Omega)}\leq C\mu(1+\frac{\mu(\ln\lambda)^4}{\lambda^3}), \quad \textrm{for any} \ (\varphi,S)\in M_{\lambda,\mu}.
\end{aligned}\end{equation}

Next, we consider $\|g_{1,\lambda,\mu}(\varphi,\hat{S})\|_{Y_{\alpha}}$, where $\hat{S}=L^{-1}_2(g_{2,\lambda,\mu}(\varphi,S))$.

From (\ref{laplaceUq}), we can rewrite (\ref{g1g2}) in $B_{2d}(q)$ as
\begin{equation}\begin{aligned}
g_{1,\lambda,\mu}(\varphi,\hat{S})=&[\lambda^2F(w_{\lambda,q}(y))\cdot1_{B_{2d}(q)}-\lambda^2F(U_{\lambda,q}+u_0)+\lambda^2f(w_{\lambda,q}(y))\varphi-\lambda^2f(U_{\lambda,q}+u_0)\varphi]\\
&\quad +[\lambda^2F(U_{\lambda,q}+u_0)(1+\varphi-e^{\varphi-\frac{\lambda}{\mu}h(\varphi,\hat{S})})]+4\pi \mathfrak{M}\theta\\
&\quad +[\lambda^2e^{U_{\lambda,q}+u_0}\hat{S}+\lambda^2e^{U_{\lambda,q}+u_0}(e^{\varphi-\frac{\lambda}{\mu}h(\varphi,\hat{S})}-1)(e^{U_{\lambda,q}+u_0}\varphi+\hat{S})]\\
=:&I+II+III+IV
\end{aligned}\end{equation}

From the decay of $w$ and mean value theorem, there exists some $0\leq\sigma,\sigma'\leq1$ such that
\begin{equation}\begin{aligned}
I=&\lambda^2F(w_{\lambda,q}(y))\cdot1_{B_{2d}(q)}-\lambda^2F(U_{\lambda,q}+u_0)+\lambda^2f(w_{\lambda,q}(y))\varphi-\lambda^2f(U_{\lambda,q}+u_0)\varphi\\
=&-\lambda^2f\Big(\sigma w_{\lambda,q}+(1-\sigma)(U_{\lambda,q}+u_0)\Big)\Big(u_0(y)-u_0(q)+4\pi \mathfrak{M}(\gamma(y,q)-\gamma(q,q))(1-\theta)\Big)\cdot1_{B_{d}(q)}\\
&\quad -\lambda^2f'\Big(\sigma' w_{\lambda,q}+(1-\sigma')(U_{\lambda,q}+u_0)\Big)\Big(u_0(y)-u_0(q)+4\pi \mathfrak{M}(\gamma(y,q)-\gamma(q,q))(1-\theta)\Big)\varphi\cdot1_{B_{2d}(q)}\\&\quad+O(\lambda^{-2\mathfrak{M}+2}).
\end{aligned}\end{equation}
This implies that
\begin{equation}\begin{aligned}
&\|\tilde{I}(1+|z|)^{1+\alpha/2}\|_{L^2(B_{2d\lambda}(0))}\\
&\leq C\lambda^2(1+\|\varphi\|_{L^\infty(\Omega)})\|e^{w(z)}(1+|z|)^{1+\alpha/2}\Big(u_0(\lambda^{-1}z+q)-u_0(q)\Big)\|_{L^2(B_{2d\lambda}(0))}\\
&\quad+C\lambda^2(1+\|\varphi\|_{L^\infty(\Omega)})\|e^{w(z)}(1+|z|)^{1+\alpha/2}\Big(\gamma(\lambda^{-1}z+q,q)-\gamma(q,q)\Big)\|_{L^2(B_{2d\lambda}(0))}
+O(\lambda^{-2\mathfrak{M}+4+\frac{\alpha}{2}}).
\end{aligned}\end{equation}
We again apply mean value theorem to $u_0(\lambda^{-1}z+q)-u_0(q)$ and $\gamma(\lambda^{-1}z+q,q)-\gamma(q,q)$, then for some $0\leq\sigma,\sigma'\leq1$ we get that
\[
|u_0(\lambda^{-1}z+q)-u_0(q)|= \lambda^{-1}|\nabla_y u_0(y)\Big|_{y=\sigma\lambda^{-1}z+q}||z|
\]
and
\[
|\gamma(\lambda^{-1}z+q,q)-\gamma(q,q)|= \lambda^{-1}|\nabla_y \gamma(y)\Big|_{y=\sigma\lambda^{-1}z+q}||z|.
\]
Since $u_0$ and $\gamma$ are regular and $e^{w(z)}\leq C(1+|z|)^{-2\mathfrak{M}}$ in $B_{2d\lambda}(0)$, we get that
\[
\|\tilde{I}(1+|z|)^{1+\alpha/2}\|_{L^2(B_{2d\lambda}(0))}\leq C\lambda.
\]

From Taylor's Theorem, there exists some $0\leq\sigma\leq1$ such that
\begin{equation}\label{taylorargue}
\begin{aligned}
II&=\lambda^2F(U_{\lambda,q}+u_0)\Big(1+\varphi-1-\varphi+\frac{\lambda}{\mu}h(\varphi,\hat{S})
-\frac{1}{2}e^{\sigma(\varphi-\frac{\lambda}{\mu}h(\varphi,\hat{S}))}(\varphi-\frac{\lambda}{\mu}h(\varphi,\hat{S}))^2\Big)\\
&=\lambda^2F(U_{\lambda,q}+u_0)\Big(\frac{\lambda}{\mu}h(\varphi,\hat{S})-\frac{1}{2}e^{\sigma(\varphi-\frac{\lambda}{\mu}h(\varphi,\hat{S}))}
(\varphi-\frac{\lambda}{\mu}h(\varphi,\hat{S}))^2\Big).
\end{aligned}\end{equation}
We have
\begin{equation}\begin{aligned}
&\|e^{w(z)}(1+|z|)^{1+\alpha/2}\hat{S}(\lambda^{-1}z+q)\|_{L^2(B_{2d\lambda}(0))}\\
&= \Big(\int_{B_{2d\lambda}(0)} e^{2w(z)}(1+|z|)^{2+\alpha}\hat{S}^2(\lambda^{-1}z+q)dz \Big)^{1/2}\\
&\leq C\Big(\lambda^2\int_{B_{2d\lambda}(0)} \hat{S}^2(\lambda^{-1}z+q)\lambda^{-2}dz \Big)^{1/2}\\
&= C\lambda\Big(\int_{B_{2d}(q)} \hat{S}^2(y)dy \Big)^{1/2}=C\lambda\|\hat{S}\|_{L^2(B_{2d(q)})},
\end{aligned}\end{equation}
where $y:=\lambda^{-1}z+q$. From Theorem \ref{theoremb} and $\|g_{2,\lambda,\mu}(\varphi,\hat{S})\|_{L^2(\Omega)}$, we note that
\begin{equation}\begin{aligned}
\|\frac{\lambda}{\mu}h(\varphi,\hat{S})\|_{L^\infty(\Omega)}\leq
&\frac{\lambda}{\mu}\|e^{U_{\lambda,q}+u_0}(1+\varphi)\|_{L^\infty(\Omega)}+\frac{\lambda}{\mu}\|\hat{S}\|_{L^\infty(\Omega)}\\
\leq& \frac{C\lambda}{\mu}(1+\|\varphi\|_{L^\infty(\Omega)})+\frac{\lambda}{\mu}\|\hat{S}\|_{L^\infty(\Omega)}\leq 1.
\end{aligned}\end{equation}
This implies that
\begin{equation}\begin{aligned}
&\lambda^{-2}\|\tilde{II}(1+|z|)^{1+\frac{\alpha}{2}}\|_{L^2(B_{2d\lambda}(0))}\\
&=\|F(\tilde{U}_q+\tilde{u}_0)\Big(\frac{\lambda}{\mu}h(\tilde{\varphi},\tilde{\hat{S}})
-\frac{1}{2}e^{\sigma(\tilde{\varphi}-\frac{\lambda}{\mu}h(\tilde{\varphi},\tilde{\hat{S}}))}
(\tilde{\varphi}-\frac{\lambda}{\mu}h(\tilde{\varphi},\tilde{\hat{S}}))^2\Big)(1+|z|)^{1+\frac{\alpha}{2}}\|_{L^2(B_{2d\lambda}(0))}\\
&\leq\frac{C\lambda}{\mu}\|e^{w(z)}(1+|z|)^{1+\alpha/2}(1+\tilde{\varphi})\|_{L^2(B_{2d\lambda}(0))}
+\frac{C\lambda}{\mu}\|e^{w(z)}(1+|z|)^{1+\alpha/2}\tilde{\hat{S}}\|_{L^2(B_{2d\lambda}(0))}\\
&\quad +C\|e^{w(z)}(1+|z|)^{1+\alpha/2}\Big(\tilde{\varphi}^2+\frac{\lambda^2}{\mu^2}(1+\tilde{\varphi})^2+\frac{\lambda^2}{\mu^2}\tilde{\hat{S}}^2\Big)\|_{L^2(B_{2d\lambda}(0))}\\
&\leq \frac{C\lambda}{\mu}(1+\|\varphi\|_{L^\infty(\Omega)})+C\|\varphi\|_{L^\infty(\Omega)}^2 +\frac{C\lambda^2}{\mu} \|\hat{S}\|_{L^2(B_{2d}(q))}(1+\frac{\lambda}{\mu} \|\hat{S}\|_{L^\infty(\Omega)}).
\end{aligned}\end{equation}

From the definition of $\theta$, it follows that
\begin{equation}\begin{aligned}
\|\tilde{III}(1+|z|)^{1+\frac{\alpha}{2}}\|_{L^2(B_{2d\lambda}(0))}&=\|4\pi\mathfrak{M}\theta (1+|z|)^{1+\frac{\alpha}{2}}\|_{L^2(B_{2d\lambda}(0))}\\
&\leq C\lambda^{-2\mathfrak{M}+2}\|(1+|z|)^{1+\frac{\alpha}{2}}\|_{L^2(B_{2d\lambda}(0))}\leq C\lambda^{-2\mathfrak{M}+4+\frac{\alpha}{2}}.
\end{aligned}\end{equation}

Finally,
\begin{equation}\begin{aligned}\label{IV}
\lambda^{-2}\|\tilde{IV}(1+|z|)^{1+\frac{\alpha}{2}}\|_{L^2(B_{2d\lambda}(0))}
&\leq C\|e^{w(z)}(1+|z|)^{1+\frac{\alpha}{2}}\varphi(e^{\varphi-\frac{\lambda}{\mu}h}-1)\|_{L^2(B_{2d\lambda}(0))}\\
&\quad +C\|e^{w(z)}(1+|z|)^{1+\frac{\alpha}{2}}\hat{S}(\lambda^{-1}z+q)\|_{L^2(B_{2d\lambda}(0))}.
\end{aligned}\end{equation}

Since
\[
|e^{\varphi-\frac{\lambda}{\mu}h}-1|\leq \frac{C\lambda}{\mu} + C|\varphi|+ \frac{C\lambda}{\mu}|\hat{S}|,
\]
this implies that
\begin{equation}\begin{aligned}
\lambda^{-2}\|\tilde{IV}(1+|z|)^{1+\frac{\alpha}{2}}\|_{L^2(B_{2d\lambda}(0))}\leq \frac{C\lambda}{\mu}\|\varphi\|_{L^\infty(\Omega)}+C\|\varphi\|_{L^\infty(\Omega)}^2 + C\lambda \|\hat{S}\|_{L^2(B_{2d}(q))}.
\end{aligned}\end{equation}

In $\Omega\setminus B_d(q)$, we have
\begin{equation}\begin{aligned}
&g_{1,\lambda,\mu}(\varphi,\hat{S})\\
&=\lambda^2F(U_{\lambda,q}+u_0)(\varphi-e^{\varphi-\frac{\lambda}{\mu}h(\varphi,\hat{S})})+\lambda^2f(w_{\lambda,q}(y))\cdot1_{B_{2d}(q)}\varphi+4\pi\mathfrak{M}\theta\\
&\quad -\lambda^2f(U_{\lambda,q}+u_0)\varphi+\lambda^2e^{U_{\lambda,q}+u_0}
(e^{U_{\lambda,q}+u_0+\varphi-\frac{\lambda}{\mu}h(\varphi,\hat{S})}\varphi-e^{U_{\lambda,q}+u_0}\varphi+e^{\varphi-\frac{\lambda}{\mu}h(\varphi,\hat{S})}\hat{S}).
\end{aligned}\end{equation}
Since $e^{U_{\lambda,q}}\leq c\lambda^{-2\mathfrak{M}}$ in $\Omega\setminus B_d(q)$, it follows that
\[
\|g_{1,\lambda,\mu}(\varphi,\hat{S})\|_{L^2(\Omega\setminus B_d(q))}\leq
C\lambda^{-2\mathfrak{M}+2}(1+\|\varphi\|_{L^\infty(\Omega)}+ \|\hat{S}\|_{L^2(\Omega\setminus B_d(q))}).
\]

Therefore, Theorem \ref{theoremb} and the assumption $\lambda^2\ln\lambda<\mu$ yield for any $(\varphi,S)\in M_{\lambda,\mu}$
\begin{equation}\label{estimateg1}
\begin{aligned}
\|g_{1,\lambda,\mu}(\varphi,\hat{S})\|_{Y_{\alpha}}
&\leq C\frac{1}{\lambda}+C(\frac{\lambda}{\mu}+\lambda^{-2\mathfrak{M}+2})\|\varphi\|_{L^\infty(\Omega)}
+C\|\varphi\|_{L^\infty(\Omega)}^2+C\lambda\|\hat{S}\|_{L^2(\Omega)}(1+\frac{\lambda^2}{\mu^2} \|\hat{S}\|_{L^\infty(\Omega)})\\
&\leq \frac{(\ln\lambda)^{1/2}}{\lambda}.
\end{aligned}\end{equation}

From Theorem \ref{theorema} and \ref{theoremb}, the inequalities (\ref{estimateg2}) and (\ref{estimateg1}) yield that $\Psi(\varphi,S)\in M_{\lambda,\mu}$ for any $(\varphi,S)\in M_{\lambda,\mu}$.

\medskip
\noindent
\textbf{Step 2.} We claim that for any $(\varphi_1,S_1)$ and $(\varphi_2,S_2)$ in $M_{\lambda,\mu}$, there exists some constant $0<\tau<1$ such that
\begin{equation}\begin{aligned}\label{claims2}
\|\Psi(\varphi_1,S_1)- \Psi(\varphi_2,S_2)\|<\tau\|(\varphi_1,S_1)- (\varphi_2,S_2)\|.
\end{aligned}\end{equation}
Firstly, we see that
\begin{equation*}\begin{aligned}
&\|g_{2,\lambda,\mu}(\varphi_1,S_1)- g_{2,\lambda,\mu}(\varphi_2,S_2)\|_{L^2(\Omega)}\\
&\leq \|\Delta\{e^{U_{\lambda,q}+u_0}(\varphi_1-\varphi_2)\}\|_{L^2(\Omega)}
+\|\mu^2e^{U_{\lambda,q}+u_0}(\varphi_1-\varphi_2+e^{\varphi_2-\frac{\lambda}{\mu}h(\varphi_2,S_2)}
-e^{\varphi_1-\frac{\lambda}{\mu}h(\varphi_1,S_1)})\|_{L^2(\Omega)}\\
&\quad +\lambda\mu\|e^{U_{\lambda,q}+u_0+\varphi_1-\frac{\lambda}{\mu}h(\varphi_1,S_1)}\{e^{U_{\lambda,q}+u_0}(1+\varphi_1)+S_1-1\}
\\&\quad
-e^{U_{\lambda,q}+u_0+\varphi_2-\frac{\lambda}{\mu}h(\varphi_2,S_2)}\{e^{U_{\lambda,q}+u_0}(1+\varphi_2)+S_2-1\}\|_{L^2(\Omega)}
\end{aligned}\end{equation*}
By the similar way in Step 1, we can get that
\begin{equation}\begin{aligned}\label{diffg2}
&\|g_{2,\lambda,\mu}(\varphi_1,S_1)- g_{2,\lambda,\mu}(\varphi_2,S_2)\|_{L^2(\Omega)}
\\&=O(\mu(1+\frac{\mu(\ln\lambda)^2}{\lambda^2})\|\varphi_1-\varphi_2\|_{L^\infty(\Omega)}+\lambda\|\varphi_1-\varphi_2\|_{X_{\alpha,q}} +\lambda\mu\|S_1-S_2\|_{L^2(\Omega)}).
\end{aligned}\end{equation}
Next, we consider $\|g_{1,\lambda,\mu}(\varphi_1,\hat{S}_1)-g_{1,\lambda,\mu}(\varphi_2,\hat{S}_2)\|_{Y_{\alpha}}$, where $\hat{S}_i=L^{-1}_2(g_{2,\lambda,\mu}(\varphi_i,S_i))$, $i=1,2$.
We see that
\begin{equation}\begin{aligned}
&g_{1,\lambda,\mu}(\varphi_1,\hat{S}_1)-g_{1,\lambda,\mu}(\varphi_2,\hat{S}_2)\\&=[\lambda^2f(w_{\lambda,q}(y))\cdot1_{B_{2d}(q)}-\lambda^2f(U_{\lambda,q}+u_0)](\varphi_1-\varphi_2)\\
&\quad +[\lambda^2F(U_{\lambda,q}+u_0)(\varphi_1-\varphi_2+e^{\varphi_2-\frac{\lambda}{\mu}h(\varphi_2,\hat{S}_2)}-e^{\varphi_1-\frac{\lambda}{\mu}h(\varphi_1,\hat{S}_1)})]\\
&\quad +\lambda^2e^{U_{\lambda,q}+u_0}[e^{U_{\lambda,q}+u_0}
(e^{\varphi_1-\frac{\lambda}{\mu}h(\varphi_1,\hat{S}_1)}\varphi_1
-e^{\varphi_2-\frac{\lambda}{\mu}h(\varphi_2,\hat{S}_2)}\varphi_2)\\
&\quad \quad +e^{U_{\lambda,q}+u_0}(\varphi_2-\varphi_1)+\hat{S}_1e^{\varphi_1-\frac{\lambda}{\mu}h(\varphi_1,\hat{S}_1)}-\hat{S}_2e^{\varphi_2-\frac{\lambda}{\mu}h(\varphi_2,\hat{S}_2)}].
\end{aligned}
\end{equation}
By the similar way in Step 1, we can get that
\begin{equation}\label{diffg1}
\begin{aligned}
&\|g_{1,\lambda,\mu}(\varphi_1,\hat{S}_1)-g_{1,\lambda,\mu}(\varphi_2,\hat{S}_2)\|_{Y_{\alpha}}
\\
&=O(\frac{(\ln\lambda)^2}{\lambda}\|\varphi_1-\varphi_2\|_{L^\infty(\Omega)}
+\lambda\|\hat{S}_1-\hat{S}_2\|_{L^2(\Omega)}).
\end{aligned}\end{equation}
In view of \eqref{diffg2}-\eqref{diffg1}, Theorem \ref{theorema}, and Theorem \ref{theoremb}, we can prove the claim \eqref{claims2}.

\end{proof}

\noindent
{\bf Completion of the proof of Theorem \ref{thm1}}.
By Proposition \ref{contractionmap}, we get that for any large $\lambda, \mu>0$ and any $q$ close to $\hat{q}$, where ${\hat{q}}$ is a non-degenerate critical point of $u_0$,
there are $(\varphi_q,S_q)\in M_{\lambda,\mu}$ and constants $c_{q,j}$ such that
\begin{equation}\label{final}
\left\{\begin{array}{l}
\Delta \varphi_q+\lambda^2 f(w(\lambda|y-q|))\cdot1_{B_{2d}(q)}\varphi_q=g_{1,\lambda,\mu}(\varphi_q,S_q)+\sum_{j=1}^{2}
c_{q,j}Z_{q,j},\\
\Delta S_q-\mu^2S_q=g_{2,\lambda,\mu}(\varphi_q,S_q).
\end{array}
\right.
\end{equation}

In the following, we will choose $q$ suitably (depending on $\lambda,\mu>0$) such that the corresponding constants $c_{q,j}$ are zero and thus $(u_{\lambda,\mu}, N_{\lambda,\mu})$ is a solution to \eqref{main_eq}, where \[u_{\lambda,\mu}+\frac{N_{\lambda,\mu}}{\mu}=U_{\lambda,q}+\varphi_q\ \ \textrm{and}\ \ \frac{N_{\lambda,\mu}}{\lambda}=  e^{U_{\lambda,q}+u_0}(1+\varphi_q)+S_q.\]
It is standard to prove the following.

\begin{lemma}\label{lemmac3}
If
\begin{equation}\label{eqw41}
\int_{\Omega} \left(\Delta \varphi_q+\lambda^2 f(w(\lambda|y-q|))\cdot1_{B_{2d}(q)}\varphi_q-g_{1,\lambda,\mu}(\varphi_q,S_q)\right)W_{q,j}dx=0, \  j=1,2,
\end{equation}then $c_{q,j}=0$ for $j=1,2$.
\end{lemma}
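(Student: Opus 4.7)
The plan is to multiply the first equation of \eqref{final} by $W_{q,j}$ and integrate over $\Omega$; combined with the hypothesis \eqref{eqw41}, this reduces the lemma to showing that the $2 \times 2$ matrix
\begin{equation*}
M_{jk} := \int_{\Omega} Z_{q,k} W_{q,j} \, dy, \qquad j,k = 1,2,
\end{equation*}
is non-singular, since we obtain $\sum_{k=1}^{2} c_{q,k} M_{jk} = 0$ for $j = 1,2$. It then suffices to prove $\det M \neq 0$ for all sufficiently large $\lambda$.

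The key step is computing $Z_{q,j}$ on $B_d(q)$, where the cutoff $\chi$ is identically $1$, so $W_{q,j} = \partial_{q_j} w(\lambda|y-q|)$. Differentiating the identity $\Delta_y w(\lambda|y-q|) + \lambda^2 F(w(\lambda|y-q|)) = 0$ with respect to $q_j$ yields $-\Delta W_{q,j} = \lambda^2 f(w_{\lambda,q}) W_{q,j}$ on $B_d(q)$. Combined with the algebraic identity $f(t) + e^t = 2F(t)$, which follows from $F(t) = e^t(1-e^t)$ and $f = F'$, the definition \eqref{eq15} gives
\begin{equation*}
Z_{q,j} = 2\lambda^2 F(w_{\lambda,q}) W_{q,j} \quad \text{on } B_d(q).
\end{equation*}

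Rescaling by $\xi = \lambda(y-q)$ and using $W_{q,k}(y) = -\lambda\,(\partial_{\xi_k} w)(|\xi|)$ on $B_d(q)$ together with the radial symmetry of $w$, the contribution of $B_d(q)$ to $M_{jk}$ becomes
\begin{equation*}
2\lambda^2 \int_{B_{d\lambda}(0)} F(w(|\xi|)) \frac{\xi_j \xi_k}{|\xi|^2} \bigl(w'(|\xi|)\bigr)^2 d\xi.
\end{equation*}
Reflection symmetry in each coordinate forces the off-diagonal entries ($j \neq k$) to vanish identically, while the diagonal entries converge, as $\lambda \to \infty$, to $\lambda^2 c_0$ with $c_0 := 2\pi \int_{0}^{\infty} F(w(r))(w'(r))^2\, r\, dr$. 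Here $c_0 > 0$: the integrand is nonnegative since $w < 0$ on $\mathbb{R}^2$ (making $F(w) = e^w(1-e^w) > 0$) per the analysis of entire solutions in \cite{CFL}, it is strictly positive on a set of positive measure since $w$ is non-constant, and the integral is finite thanks to the decay \eqref{decay}.

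The remaining contribution of the annulus $B_{2d}(q) \setminus B_d(q)$ to $M_{jk}$ is controlled by the polynomial decay in \eqref{decay}: choosing the cutoff $\chi$ to be radial preserves the reflection symmetry that kills off-diagonal entries, while the extra diagonal piece is $O(\lambda^{-2\mathfrak{M}+2})$ because $|w'(\lambda r)| = O((\lambda r)^{-2\mathfrak{M}-1})$ in the annulus, which is negligible relative to $\lambda^2 c_0$ since $\mathfrak{M} > 2$. Consequently $M$ is diagonal to leading order with positive diagonal entries of size $\lambda^2 c_0$, hence invertible for large $\lambda$, and we conclude $c_{q,k} = 0$ for $k=1,2$. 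The main (and only) subtle point in this Lyapunov–Schmidt style argument is verifying the non-degeneracy constant $c_0 \neq 0$, which is a direct consequence of the radial structure and decay properties of the entire vortex solution $w$.
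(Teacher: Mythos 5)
Since the paper gives no proof of this lemma (it simply says ``It is standard to prove the following''), there is no argument to compare against line by line; your Lyapunov--Schmidt reduction to the invertibility of $M_{jk}=\int_\Omega Z_{q,k}W_{q,j}\,dy$ is exactly the standard route the authors have in mind, and the core computations are right. In particular, the algebraic identity $f(t)+e^t=2F(t)$ giving $Z_{q,j}=2\lambda^2F(w_{\lambda,q})W_{q,j}$ on $B_d(q)$, the vanishing of $M_{jk}$ ($j\neq k$) by reflection parity, and the positivity and finiteness of the leading coefficient $c_0$ are all correct.

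Two details in the annulus estimate are off, though neither affects the conclusion. First, the claimed bound $|w'(\lambda r)|=O((\lambda r)^{-2\mathfrak{M}-1})$ misreads \eqref{decay}: that display says $w'(r)=-\tfrac{2\mathfrak{M}}{r}+O(r^{-(2\mathfrak{M}-1)})$, so $|w'(\lambda r)|=O((\lambda r)^{-1})$; the quantity that actually has the fast polynomial decay is $F(w(\lambda r))=O((\lambda r)^{-2\mathfrak{M}})$, which is what makes the clean part of the annulus contribution $O(\lambda^{2-2\mathfrak{M}})$. Second, on $B_{2d}(q)\setminus B_d(q)$ the identity $Z_{q,j}=2\lambda^2F(w_{\lambda,q})W_{q,j}$ fails: since $-\Delta W_{q,j}=\lambda^2f(w_{\lambda,q})W_{q,j}-2\nabla\chi\cdot\nabla\bigl(\partial_{q_j}w(\lambda|\cdot-q|)\bigr)-(\Delta\chi)\,\partial_{q_j}w(\lambda|\cdot-q|)$, there are cutoff terms in $Z_{q,j}$ that are $O(1)$ pointwise, and their contribution to $M_{jj}$ is $O(1)$, not $O(\lambda^{-2\mathfrak{M}+2})$. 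That is still $o(\lambda^2)$, so $M$ is diagonally dominant with diagonal $\sim\lambda^2c_0>0$ and hence invertible for large $\lambda$ -- the lemma follows -- but you should either track those cutoff terms or, more cleanly, note via integration by parts that $M_{jk}=\int\nabla W_{q,j}\cdot\nabla W_{q,k}+\lambda^2e^{w_{\lambda,q}}W_{q,j}W_{q,k}\,dy$ is a Gram matrix for a positive-definite bilinear form, so invertibility follows once $W_{q,1},W_{q,2}$ are linearly independent.
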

\medskip
\noindent

Next we have the following reduced problem:

\begin{lemma}\label{reducedproblem}
\begin{equation}
\begin{aligned}\label{finalclaim}
&\int_{\Omega} \left(\Delta \varphi_q+\lambda^2 f(w(\lambda|y-q|))\cdot1_{B_{2d}(q)}\varphi_q-
g_{1,\lambda,\mu}(\varphi_q,S_q)\right)W_{q,j}dy\\
&=a_0D_j u_0(q)+o(1) \ \ \textrm{as}\ \ \lambda, \mu\to \infty, \frac{(\ln\lambda)\lambda^2}{\mu}\to 0, \ j=1,2,
\end{aligned}
\end{equation}
for some $a_0\neq 0$.
\end{lemma}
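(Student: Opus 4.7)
I would start with an integration by parts on the left-hand side. Since $W_{q,j}$ is compactly supported in $B_{2d}(q)\subset\Omega$, no boundary terms appear and
\begin{align*}
\int_\Omega \bigl(\Delta\varphi_q + \lambda^2 f(w_{\lambda,q})\,1_{B_{2d}(q)}\varphi_q\bigr)W_{q,j}\,dy = \int_\Omega \varphi_q\bigl(\Delta W_{q,j} + \lambda^2 f(w_{\lambda,q})\,1_{B_{2d}(q)}W_{q,j}\bigr)dy.
\end{align*}
Differentiating the radial equation $\Delta w(\lambda|y-q|) + \lambda^2 F(w(\lambda|y-q|)) = 0$ in $q_j$ shows that $\partial_{q_j}w(\lambda|y-q|)$ is a kernel element of the linearized operator $\Delta + \lambda^2 f(w_{\lambda,q})$. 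Consequently the bracketed expression on the right vanishes identically on $B_d(q)$ (where $\chi\equiv 1$), while on the annulus $B_{2d}(q)\setminus B_d(q)$ only the cut-off commutator $2\nabla\chi\cdot\nabla\partial_{q_j}w_{\lambda,q} + (\Delta\chi)\,\partial_{q_j}w_{\lambda,q}$ survives; this is $O(1)$ in $L^\infty$ by the decay $w'(t)\sim -2\mathfrak{M}/t$ in \eqref{decay}, and paired with $\|\varphi_q\|_{L^\infty}\le (\ln\lambda)^2/\lambda$ it contributes $o(1)$.

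The asymptotic of the LHS of \eqref{finalclaim} therefore comes entirely from $-\int_\Omega g_{1,\lambda,\mu}(\varphi_q,S_q)W_{q,j}\,dy$. Inspecting \eqref{g1g2}, every term of $g_{1,\lambda,\mu}$ that is of order $\ge 1$ in $(\varphi_q, S_q)$, or that carries a factor $\lambda/\mu$, pairs with $W_{q,j}$ to give $o(1)$ by the same estimates driving Proposition \ref{contractionmap} together with the hypothesis $(\ln\lambda)\lambda^2/\mu \to 0$. What dominates is the bare approximation error $-\Delta U_{\lambda,q} - \lambda^2 F(U_{\lambda,q}+u_0) + 4\pi\mathfrak{M}$. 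On $B_d(q)$, substituting $-\Delta U_{\lambda,q} = \lambda^2 F(w_{\lambda,q}) - 4\pi\mathfrak{M}(1-\theta)$ and applying the mean value theorem reduces this to $-\lambda^2 f(\xi_q)\,h_0(y,q) + 4\pi\mathfrak{M}\theta$, where
\[ h_0(y,q) := u_0(y) - u_0(q) + 4\pi\mathfrak{M}(1-\theta)\bigl(\gamma(y,q) - \gamma(q,q)\bigr) \]
and $\xi_q$ lies between $w_{\lambda,q}$ and $U_{\lambda,q}+u_0$; the $\theta$-piece and the exterior contribution in $\Omega\setminus B_d(q)$ are both $O(\lambda^{-2\mathfrak{M}+2})$ and negligible.

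After rescaling $z = \lambda(y-q)$, so that $W_{q,j}(y) = -\lambda w'(|z|)z_j/|z|$ and $dy = \lambda^{-2}\,dz$, I would Taylor-expand $h_0(q+z/\lambda, q)$ to first order in $1/\lambda$ and replace $f(\xi_q)$ by $f(w(|z|))$ at a cost controlled by the $r^{-2\mathfrak{M}}$ decay of $f(w)$ (integrability at infinity uses $\mathfrak{M}>2$). Radial symmetry of $w$ kills the off-diagonal angular pieces, producing
\[ -\int_\Omega g_{1,\lambda,\mu}\,W_{q,j}\,dy = -D_j u_0(q)\cdot\pi\int_0^\infty f(w(r))w'(r)r^2\,dr + o(1). \]
A one-dimensional integration by parts using $f=F'$ and the boundary decay from \eqref{decay} converts the remaining integral to $2\pi\int_0^\infty F(w(r))r\,dr = \int_{\mathbb{R}^2} F(w(|z|))\,dz$, so the constant works out to $a_0 = \int_{\mathbb{R}^2} F(w)\,dz > 8\pi(1+\mathfrak{M}) > 0$ by Lemma \ref{lemma2.1}; in particular $a_0\neq 0$. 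The main obstacle will be handling the Robin-function term $4\pi\mathfrak{M}\nabla_y\gamma(y,q)|_{y=q}$ that a naive Taylor expansion of the second summand of $h_0$ produces: one must verify that the subtraction $\gamma(q,q)$ together with the $\theta$-matching built into the definition \eqref{Uq} of $U_{\lambda,q}$ absorbs this contribution into the $o(1)$ remainder, leaving only the $a_0 D_j u_0(q)$ piece. Once this reduction is achieved, the nondegeneracy of $\hat q$ as a critical point of $u_0$ produces by the implicit function theorem a nearby $q = q_{\lambda,\mu}$ making $c_{q,1}=c_{q,2}=0$, which, in view of Lemma \ref{lemmac3}, completes the construction for Theorem \ref{thm1}.
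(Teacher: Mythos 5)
Your overall strategy matches the paper's: rescale $z=\lambda(y-q)$, use the kernel identity $\Delta(\partial_{z_j} w)+f(w)\partial_{z_j}w=0$ (paper's equation \eqref{der}) to reduce the $\Delta\varphi_q+\lambda^2 f\,\varphi_q$ pairing to a commutator with $\chi$ of size $O(\|\varphi_q\|_{L^\infty})=o(1)$, and then extract the leading order of $-\int g_{1,\lambda,\mu}W_{q,j}$ by linearizing $F$ at $w$, Taylor-expanding the smooth perturbation, and using radial symmetry plus the one-dimensional integration by parts $\int_0^\infty F'(w)w'r^2\,dr=-2\int_0^\infty F(w)r\,dr$. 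That is exactly the paper's decomposition into $I$ through $V$ in \eqref{re1}, with Steps 1--3 of the paper's proof corresponding to your three paragraphs.

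However, there is a genuine gap in the way you dispose of the Robin-function contribution. After Taylor expansion, the coefficient of $\int_{\R^2}F(w)\,dz$ is $D_j\Gamma(q)$ with $\Gamma(y)=u_0(y)+4\pi\mathfrak{M}(1-\theta)\gamma(y,q)$, so you must explain why the $4\pi\mathfrak{M}(1-\theta)\,D_j\gamma(y,q)|_{y=q}$ piece drops out. You speculate that ``the subtraction of $\gamma(q,q)$ together with the $\theta$-matching'' absorbs it; neither does. Subtracting the constant $\gamma(q,q)$ does not affect the $y$-gradient, and $\theta=O(\lambda^{-2\mathfrak{M}+2})$ is a tiny boundary-matching correction that cannot cancel an $O(1)$ coefficient. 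The actual reason, which the paper invokes explicitly, is the identity $\nabla_y\gamma(y,q)\big|_{y=q}=0$ for every $q$, which holds on the flat torus because $\gamma(y,q)=\gamma(y-q)$ with $\gamma$ even; this forces $D_j\Gamma(q)=D_j u_0(q)$ and yields exactly $a_0\,D_ju_0(q)$. Without this fact your reduction does not close. A secondary, inconsequential slip: Lemma \ref{lemma2.1} applied with $m=0$ gives $a_0=\int_{\R^2}e^w(1-e^w)\,dz>8\pi$ (in fact $a_0=2\pi\beta=4\pi\mathfrak{M}$ from the prescribed decay of $w$), not $>8\pi(1+\mathfrak{M})$; but $a_0>0$ is all that is needed and this does not affect the conclusion.
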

\begin{proof}
Since $W_{q,j}=\chi(y-q)\frac{\partial w(\lambda|y-q|)}{\partial q_j}=-\lambda\chi(y-q)\frac{\partial w(z)}{\partial z_j}\Big|_{z=\lambda(y-q)}$,  we see that
\begin{equation}
\begin{aligned}\label{re1}
&\int_{\Omega} \left(\Delta_y \varphi_q+\lambda^2 f(w(\lambda|y-q|))\cdot1_{B_{2d}(q)}\varphi_q-g_{1,\lambda,\mu}(\varphi_q,S_q)\right)W_{q,j}dy\\
&=-\lambda\int_{B_{2d\lambda}(0)} \left(\Delta_z \tilde{\varphi_q} +f(w(z))\tilde{\varphi}_q\right)
\chi(\lambda^{-1}z)\frac{\partial w(z)}{\partial z_j}dz\\
&\quad +\lambda\int_{B_{2d\lambda}(0)}[F(w(z))\cdot1_{B_{d\lambda}(0)}-F((U_{\lambda,q}+u_0)(\lambda^{-1}z+q))]
\chi(\lambda^{-1}z)\frac{\partial w(z)}{\partial z_j}dz\\
&\quad +\lambda\int_{B_{2d\lambda}(0)}[f(w(z))-f((U_{\lambda,q}+u_0)(\lambda^{-1}z+q))]\tilde{\varphi}_q(z)\chi(\lambda^{-1}z)
\frac{\partial w(z)}{\partial z_j}dz\\
&\quad +\lambda\int_{B_{2d\lambda}(0)}[F((U_{\lambda,q}+u_0)(\lambda^{-1}z+q))(1+\tilde{\varphi}_q-e^{\tilde{\varphi}_q
-\frac{\lambda}{\mu}h(\tilde{\varphi}_q,\tilde{S}_q)})
+4\pi \mathfrak{M}\theta\lambda^{-2}]\chi(\lambda^{-1}z)\frac{\partial w(z)}{\partial z_j}dz\\
&\quad +\lambda\int_{B_{2d\lambda}(0)}e^{(U_{\lambda,q}+u_0)(\lambda^{-1}z+q)}[e^{(U_{\lambda,q}+u_0)(\lambda^{-1}z+q)}(e^{\tilde{\varphi}_q-\frac{\lambda}{\mu}h(\tilde{\varphi}_q,\tilde{S}_q)}-1)\tilde{\varphi}_q
+e^{\tilde{\varphi}_q-\frac{\lambda}{\mu}h(\tilde{\varphi}_q,\tilde{S}_q)}\tilde{S}_q]\chi(\lambda^{-1}z)\frac{\partial w(z)}{\partial z_j}dz\\
&=: I+II+III+IV+V.
 \end{aligned}
\end{equation}

We will estimate the above term by term.

\medskip
\noindent
\textbf{Step 1.} We claim that $I=o(1)$. \\
Note that
\begin{equation}\label{der}
\Delta \left(\frac{\partial w}{\partial z_j}\right)+ f(w)\frac{\partial w}{\partial z_j}=0\ \ \textrm{in}\ \ \RN.
\end{equation}
Together with the integration by parts, we have
\begin{equation}
\begin{aligned}\label{der1}
&\lambda\int_{B_{2d\lambda}(0)} \left(\Delta_z \tilde{\varphi}_q+f(w(z))\tilde{\varphi}_q\right)\chi(\lambda^{-1}z)\frac{\partial w}{\partial z_j}dz\\
&= \lambda\int_{B_{2d\lambda}(0)}  \left(\Delta_z \left(\chi(\lambda^{-1}z)\frac{\partial w}{\partial z_j}\right)
+f(w(z))\chi(\lambda^{-1}z)\frac{\partial w}{\partial z_j}\right)\tilde{\varphi}_qdz\\
&=\lambda\int_{B_{2d\lambda}(0)}  \Bigg(\Delta_z \left(\chi(\lambda^{-1}z)\right)\frac{\partial w}{\partial z_j}
+2\nabla_z(\chi(\lambda^{-1}z))\cdot\nabla_z\left(\frac{\partial w}{\partial z_j}\right)\Bigg)\tilde{\varphi}_qdz\\
&\quad +\lambda\int_{B_{2d\lambda}(0)}  \Bigg(\Delta_z\left(\frac{\partial w}{\partial z_j}\right)+f(w(z))
\frac{\partial w(z)}{\partial z_j}\Bigg)\chi(\lambda^{-1}z)\tilde{\varphi}_qdz\\
&=\lambda\int_{B_{2d\lambda}(0)}  \Bigg(\Delta_z \left(\chi(\lambda^{-1}z)\right)\frac{\partial w}{\partial z_j}
+2\nabla_z(\chi(\lambda^{-1}z))\cdot\nabla_z\left(\frac{\partial w}{\partial z_j}\right)\Bigg)\tilde{\varphi}_qdz=O(\|\varphi_q\|_{L^\infty(\Omega)})=o(1).
\end{aligned}
\end{equation}

\noindent
\textbf{Step 2.} We claim that  $II=a_0D_j u_0(q)+o(1)$ for some $a_0\neq0$.\\
Recall  the definition of $U_{\lambda,q}$ from \eqref{Uq}, and let  $\Gamma(y)= u_0(y)+4\pi \mathfrak{M}(1-\theta)\gamma(y,q)$.
From the radial symmetry and decay rate of $w(z)$, we see that for some $0\leq\tau\leq1$
\begin{equation}\begin{aligned}\label{der2}&\lambda\int_{B_{2d\lambda}(0)}[F(w(z))\cdot1_{B_{d\lambda}(0)}-F((U_{\lambda,q}+u_0)(\lambda^{-1}z+q))]\chi(\lambda^{-1}z)\frac{\partial w(z)}{\partial z_j}dz
\\&=\lambda\int_{B_{d\lambda}(0)}\left[F(w(z))-F\left(w(z)+ \Gamma(\lambda^{-1}z+q)-\Gamma(q)\right)\right]\chi(\lambda^{-1}z)\frac{\partial w(z)}{\partial z_j}dz+O(\lambda^{-2\mathfrak{M}+2})
\\&=\lambda\int_{B_{d\lambda}(0)}f(w(z))\left(\Gamma(q)-\Gamma(\lambda^{-1}z+q)\right) \frac{\partial w(z)}{\partial z_j}dz\\
&\quad -\lambda\int_{B_{d\lambda}(0)}\frac{f'\left(w(z)+ \tau\left(\Gamma(\lambda^{-1}z+q)-\Gamma(q)\right)\right)}{2}\left(\Gamma(q)-\Gamma(\lambda^{-1}z+q)\right)^2 \frac{\partial w(z)}{\partial z_j}dz +O(\lambda^{-2\mathfrak{M}+2})
\\&=-\left(\int_{B_{d\lambda}(0)}f(w(z))\nabla\Gamma(q) \cdot z\frac{\partial w(z)}{\partial z_j}dz\right)+O(\lambda^{-1})
\\&=-\left(\int_{B_{d\lambda}(0)}f(w(z))D_j\Gamma(q) w'(z) \frac{z_j^2}{|z|}dz\right)+O(\lambda^{-1})
\\&=-\pi D_j\Gamma(q) \left(\int_{0}^{\infty}f(w(r))\frac{dw(r)}{dr}r^2dr\right)+O(\lambda^{-1})
\\&=-\pi D_j\Gamma(q) \left(F(w(r)) r^2\Big|_0^{\infty}-2\int_{0}^{\infty}F(w(r))rdr\right)+O(\lambda^{-1})
\\&= D_j \Gamma(q) \int_{\RN}F(w(x))dx +O(\lambda^{-1}).
\end{aligned}\end{equation}
It has been known that  $D_j \gamma(q,q)=0$ for any $q\in\Omega$ and $j=1,2$, which implies $D_j\Gamma(q) =D_j u_0(q)$. Together with $F(w(x))>0$ for all $x\in\RN$, we prove  the claim.

\medskip
\noindent
\textbf{Step 3.}We claim that  $III+IV+V+VI=o(1)$.
For some $0\leq\tau\leq1$ we note that
\begin{equation}
\begin{aligned}
\Big|III\Big|\leq&\lambda\int_{B_{2d\lambda}(0)}|f'\Big(\tau w+(1-\tau)(U_{\lambda,q}+u_0)\Big)\Big(U_{\lambda,q}(\lambda^{-1}z+q)+u_0(\lambda^{-1}z+q)-w(z)\Big)|\tilde{\varphi}_q(z)
\chi(\lambda^{-1}z)\frac{\partial w(z)}{\partial z_j}dz\\
\leq&C\lambda\int_{B_{2d\lambda}(0)}e^{w(z)}|u_0(\lambda^{-1}z+q))-u_0(q)+4\pi \mathfrak{M}(1-\theta)(\gamma(\lambda^{-1}z+q,q)-\gamma(q,q))|
\tilde{\varphi}_q(z)\chi(\lambda^{-1}z)\frac{\partial w(z)}{\partial z_j}dz\\
\leq&C\lambda\int_{B_{2d\lambda}(0)}e^{w(z)}\lambda^{-1}|z|\tilde{\varphi}_q(z)\chi(\lambda^{-1}z)\frac{\partial w(z)}{\partial z_j}dz\\
=&O(\|\varphi_q\|_{L^\infty(\Omega)})=o(1).
\end{aligned}
\end{equation}
Moreover,  since  $(\varphi_q,S_q)$ is a fixed point of $\Psi$,  we have
\begin{equation}\label{rv0}
S_q=L^{-1}_2(g_{2,\lambda,\mu}(\varphi_q,S_q)).
\end{equation}
From the proof of Proposition \ref{contractionmap}, we know that  $\|g_{2,\lambda,\mu}(\varphi_q,S_q)\|_{L^2(\Omega)}=O(\mu+\frac{\mu^2(\ln\lambda)^4}{\lambda^3})$ since $(\varphi_q,S_q)\in M_{\lambda,\mu}$. From Theorem \ref{theoremb}, it follows $\|S_q\|_{L^2(\Omega)}\leq O(\frac{1}{\mu}+\frac{(\ln\lambda)^4}{\lambda^3})$.
Then, by the assumption $\lambda^2\ln\lambda<\mu$ and the similar way in (\ref{taylorargue}), we get that
\[
\Big| IV\Big|= O\Big(\frac{\lambda^2}{\mu}(1+\|\varphi_q\|_{L^\infty(\Omega)}+\|S_q\|^2_{L^2(\Omega)})+\lambda\|\varphi_q\|^2_{L^\infty(\Omega)}
+\frac{\lambda^3}{\mu^2}\|S_q\|_{L^\infty(\Omega)}\|S_q\|^2_{L^2(\Omega)}\Big)+O(\lambda\theta)=o(1).
\]
We recall $h(\varphi_q,S_q) = e^{U_{\lambda,q}+u_0}(1+\varphi_q)+S_q$. In the estimation in \eqref{IV}, the assumption $\lambda^2\ln\lambda<\mu$ yields that
\begin{equation}
\begin{aligned}\label{re3}
\Big|V\Big|&= O(\lambda\|{\varphi}_q\|_{L^\infty(\Omega)}(\frac{\lambda}{\mu}+\|{\varphi}_q\|_{L^\infty(\Omega)})
+\lambda^2\|S_q\|_{L^2(\Omega)})\\
&=o(1).
 \end{aligned}
\end{equation}
From the above estimates, we can derive (\ref{finalclaim}).
\end{proof}
\medskip

 From Lemma \ref{reducedproblem}, we can derive \eqref{finalclaim}.  Since we assume that  $D u_0(\hat{q})=0$ and  $D^2(u_0)(\hat{q})$ is nondegenerate,  from Lemma \ref{reducedproblem}, we can find a point $q$ near $\hat{q}$ such that the right hand side of \eqref{finalclaim} is equal to zero. Together with Lemma \ref{lemmac3}, we can find $q$ satisfying $c_{q,j}=0$ for $j=1,2$. At this point, we   complete the proof of Theorem \ref{thm1}.

  \qed

\section{Proof of Theorem \ref{5thm}}\label{5sec5}
In this section, we are going to construct blow up solutions of (\ref{eq2}) at the vortex point satisfying
$\sup u_{\lambda,\mu}\geq -c_0>-\infty.$
Based on Theorem \ref{BrezisMerletypealternatives}, our construction in this section was inspired by the arguments in section \ref{sec4} and the construction in \cite{LY1} where the authors construct blow up solutions at the vortex point using an entire solution for the  Chern-Simons equation, which has a singularity, as the building blocks.

 We recall the equation (\ref{eq2}) as follows:
\begin{equation*}
\left\{\begin{array}{l}
\Delta (u+\frac{N}{\mu})=-\lambda^2 e^{u+u_0}\left(1-\frac{N}{\lambda}\right)+4\pi\mathfrak{M},\\
\Delta \frac{N}{\lambda}=\mu (\mu+\lambda e^{u+u_0})\frac{N}{\lambda}- \mu(\lambda+\mu)e^{u+u_0}
\end{array}
\right. \mbox{ in }\Omega.
\end{equation*}
Throughout this section, we assume that $\mathfrak{M}>4$.
First of all, we are going to define the approximate solutions for (\ref{eq2}). Let  $V$ be  the radially symmetric solution of
\begin{equation}\label{5decay}
\left\{\begin{array}{l}
\Delta V+|x|^2e^{V}(1-|x|^2e^V)=0\ \textrm{in } \R^2,\\
V'(|x|)\rightarrow -\frac{2\mathfrak{M}}{|x|}+\frac{a_1(2\mathfrak{M}-4)}{|x|^{2\mathfrak{M}-3}}+O(\frac{1}{|x|^{2\mathfrak{M}-1}}), \  |x|\gg1,              \\
V(|x|)=-2\mathfrak{M}\ln |x| +I_1-\frac{a_1}{|x|^{2\mathfrak{M}-4}}+O(\frac{1}{|x|^{2\mathfrak{M}-2}}),   \ |x|\gg1,
\end{array}
\right.
\end{equation}where
$a_1$ and $I_1$ are constants (see \cite[Theorem 2.1, Lemma 2.6]{CFL} for the existence of $V$ satisfying \eqref{5decay}).
We set
\begin{equation}\label{5Uq}
U_{\lambda}(y)=\left\{\begin{array}{l}
V(\lambda|y-p_1|)+4\pi \mathfrak{M}(\gamma(y,p_1)-\gamma(p_1,p_1))(1-\theta)+C_{\lambda}, \ y\in B_{d}(p_1),\\
V(d\lambda)+4\pi \mathfrak{M}(G(y,p_1)-\gamma(p_1,p_1)+\frac{1}{2\pi}\ln d)(1-\theta)+C_{\lambda}, \ y \in \Omega /B_{d}(p_1),
\end{array}
\right.
\end{equation}
where $C_{\lambda}=2\ln\lambda+4\pi\left(\gamma(p_1,p_1)+\sum_{j=2}^Nm_jG(p_1,p_j)\right)$ and  \[\theta=\frac{1}{2\mathfrak{M}\lambda^{2\mathfrak{M}-4}}\left\{\frac{a_1(2\mathfrak{M}-4)}{d^{2\mathfrak{M}-4}}+O(\frac{1}{\lambda^2})\right\},\] which makes $U_{\lambda}\in C^1(\Omega)$, We would find a solution of \eqref{eq2} with the following form:
\begin{equation}\label{5error}
u+\frac{N}{\mu}=U_{\lambda}+\varphi\ \ \textrm{and}\ \ \frac{N}{\lambda}=  e^{U_{\lambda}+u_0+\varphi}+S,
\end{equation}
here $(\varphi,S)$ would be regard as an error term.

We note that   $e^{U_\lambda}=O(\lambda^2)$ in \eqref{5Uq}, but $e^{U_{\lambda,q}}=O(1)$  in the section \ref{sec4}.   In order to control  the difficulties  arising from  the error parts related to $\varphi^2$ term
we need to make the difference between  \eqref{error} and \eqref{5error}.

For the convenience, we also denote
\begin{equation}\label{5f}
\begin{split}
V_{\lambda}(y)&=V(\lambda|y-p_1|),\ \ h(\varphi,S)=e^{U_{\lambda}+u_0+\varphi}+S, \\
F(t)&=t^2e^{V(t)}(1-t^2e^{V(t)})\ \ \textrm{and}\ \ f(t)=t^2e^{V(t)}(1-2t^2e^{V(t)}).
\end{split}
\end{equation}
The  equation (\ref{eq2}) is reduced to a system for $(\varphi,S)$:
\begin{equation}\label{5eq_4}
\left\{\begin{array}{l}
\Delta \varphi+\lambda^2 f(\lambda|x-p_1|)\cdot1_{B_{2d}(p_1)}\varphi=h_{1,\lambda,\mu}(\varphi,S),\\
\Delta S-\mu^2S=h_{2,\lambda,\mu}(\varphi,S),
\end{array}
\right.
\end{equation}
where
\begin{equation}\begin{aligned}\label{5g1g2}
h_{1,\lambda,\mu}(\varphi,S):=&\quad-\Delta U_{\lambda} + \lambda^2f(\lambda|x-p_1|)\cdot1_{B_{2d}(p_1)}\varphi  + 4\pi\mathfrak{M}\\
&\quad -\lambda^2e^{U_{\lambda}+u_0+\varphi-\frac{\lambda}{\mu}h(\varphi,S)}(1-e^{U_{\lambda}+u_0+\varphi}-S),\\
h_{2,\lambda,\mu}(\varphi,S):=&\quad-\Delta e^{U_{\lambda}+u_0+\varphi} + \mu^2\left(1+\frac{\lambda}{\mu}e^{U_{\lambda}+u_0+\varphi-\frac{\lambda}{\mu}h(\varphi,S)}\right)\left\{e^{U_{\lambda}+u_0+\varphi}+S\right\}-\mu^2S\\
&\quad -\mu^2(1+\frac{\lambda}{\mu})e^{U_{\lambda}+u_0+\varphi-\frac{\lambda}{\mu}h(\varphi,S)}.
\end{aligned}\end{equation}
For a small constant $0<\alpha<\frac{1}{2}$, recall that
\begin{equation}
\rho(z)=(1+|z|)^{1+\frac{\alpha}{2}}, \quad \textrm{and} \quad \bar{\rho}(z)=\frac{1}{(1+|z|)(\ln(2+|z|))^{1+\frac{\alpha}{2}}}.
\end{equation}
We say that $\psi \in X_{\alpha}$ if
\begin{equation}
\|\psi\|_{X_{\alpha}}^2=\|(\Delta \tilde{\psi})\rho\|^2_{L^2(B_{2d\lambda}(0))}+\|\tilde{\psi}\bar{\rho}\|_{L^2(B_{2d\lambda}(0))}^2+\||\Delta \psi|^2+\psi^2\|_{L^1(\Omega /B_d(p_1))}<+\infty
\end{equation}
where $\tilde{\psi}(z)=\psi(\lambda^{-1}z+p_1)$, and that $\psi \in Y_{\alpha}$ if
\begin{equation*}
\|\psi\|_{Y_{\alpha}}^2= \frac{1}{\lambda^4}\|\tilde{\psi}\rho\|^2_{L^2(B_{2d\lambda}(0))}+\|\psi\|^2_{L^2(\Omega \setminus B_d(p_1))}<+\infty.
\end{equation*}
We note that $\|\cdot\|_{X_{\alpha}}$ and $\|\cdot\|_{Y_{\alpha}}$ are similar to the norms $\|\cdot\|_{X_{\alpha,q}}$ and $\|\cdot\|_{Y_{\alpha,q}}$ in section \ref{sec4},  but the scaled area is different.
We recall the preliminary results for the linear operator $L_{1}$ in \cite{LY1}, where
\begin{equation*}
L_{1}(\varphi):=\Delta \varphi+\lambda^2f(\lambda|y-p_1|)\cdot1_{B_{2d}(p_1)}\varphi.\end{equation*}

\begin{theorem}[Theorem B.1 in \cite{LY1}]\label{5theorema1}
 $ L_{1} $ is an isomorphism from $X_{\alpha}$  to $Y_{\alpha}$. Moreover, if  $w \in X_{\alpha}$ and  $h  \in Y_{\alpha}$  satisfy $L_{1} w=h$, then there is a constant $C>0$, independent of $\lambda>0$, such that
 \begin{equation}
\left\|w \right\|_{L^{\infty}(\Omega)}+\left\|w \right\|_{ X_{\alpha }} \leq C\left(\ln \lambda\right)\left\|h \right\|_{Y_{\alpha} }.
\end{equation}
\end{theorem}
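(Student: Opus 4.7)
The plan is to combine (a) a blow-up/contradiction argument establishing the quantitative a priori estimate with (b) the Fredholm alternative to upgrade this to the isomorphism statement. Writing $L_1=\Delta+V_\lambda$ with the compactly supported potential $V_\lambda(y):=\lambda^2 f(\lambda|y-p_1|)\mathbf{1}_{B_{2d}(p_1)}$, one checks directly from the asymptotics \eqref{5decay} that $L_1:X_\alpha\to Y_\alpha$ is a bounded linear map and that $V_\lambda$ defines a compact perturbation of $\Delta$ (bounded multiplication by a compactly supported function composed with a Rellich-type embedding of the weighted $W^{2,2}$-scale into the weighted $L^2$-scale).

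For the a priori estimate, I would suppose by contradiction that it fails: there exist $\lambda_n\to\infty$, $\varphi_n\in X_\alpha$, $h_n\in Y_\alpha$ with $L_1\varphi_n=h_n$, $\|\varphi_n\|_{L^\infty(\Omega)}+\|\varphi_n\|_{X_\alpha}=1$, yet $(\ln\lambda_n)\|h_n\|_{Y_\alpha}\to 0$. Rescaling $\tilde\varphi_n(z):=\varphi_n(\lambda_n^{-1}z+p_1)$ on $B_{2d\lambda_n}(0)$ turns the equation into
\[
\Delta_z\tilde\varphi_n+f(|z|)\mathbf{1}_{B_{2d\lambda_n}(0)}\tilde\varphi_n=\lambda_n^{-2}\tilde h_n(z).
\]
The weighted bounds $\|(\Delta\tilde\varphi_n)\rho\|_{L^2}+\|\tilde\varphi_n\bar\rho\|_{L^2}\le 1$, the decay $f(r)=O(r^{2-2\mathfrak M})$ from \eqref{5decay} (where $\mathfrak M>4$ ensures sufficient decay and integrability), and interior elliptic regularity produce a subsequential $C^1_{\mathrm{loc}}(\mathbb{R}^2)$ limit $\tilde\varphi_\infty\in L^\infty(\mathbb{R}^2)$ solving $\Delta\tilde\varphi_\infty+f(|z|)\tilde\varphi_\infty=0$ on $\mathbb{R}^2$, with the inherited weighted integrability. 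Simultaneously, on $\Omega\setminus B_d(p_1)$ where $L_1$ reduces to $\Delta$, the uniform control $\||\Delta\varphi_n|^2+\varphi_n^2\|_{L^1(\Omega\setminus B_d(p_1))}\le 1$ together with elliptic estimates yields a limit $\varphi_\infty^{\mathrm{out}}$ satisfying $\Delta\varphi_\infty^{\mathrm{out}}=0$ on $\Omega\setminus\{p_1\}$; boundedness and removable-singularity force $\varphi_\infty^{\mathrm{out}}$ to be constant on the torus, and matching to $\tilde\varphi_\infty$ across $\partial B_d(p_1)$ identifies this constant with $\lim_{|z|\to\infty}\tilde\varphi_\infty(z)$.

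The core step is classifying solutions of $\Delta\tilde\varphi_\infty+f(|z|)\tilde\varphi_\infty=0$ on $\mathbb{R}^2$. Fourier-decomposing $\tilde\varphi_\infty=\sum_{k\in\mathbb{Z}}\psi_k(r)e^{ik\theta}$, each radial profile satisfies a second-order ODE with two independent solutions whose asymptotics at $0$ and $\infty$ follow from the vanishing $f(r)\sim r^2$ at the origin (Frobenius indices $\pm|k|$) and the decay $f(r)\sim r^{2-2\mathfrak M}$ at infinity (the equation is asymptotically $-\Delta$). The singular branch at $r=0$ is excluded by local $L^\infty$-boundedness. At infinity the non-decaying branch is excluded mode-by-mode: for $k\neq 0$ by power-law growth incompatible with $\bar\rho$-weighted integrability, and for $k=0$ by the $L^\infty$ constraint (which rules out logarithmic growth) combined with the weight. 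Hence $\tilde\varphi_\infty\equiv 0$ and, via matching, $\varphi_\infty^{\mathrm{out}}\equiv 0$, contradicting the normalization. With the a priori estimate in hand, injectivity is immediate, and the Fredholm alternative (via compactness of $V_\lambda$ and the isomorphism property of $\Delta$ on the appropriate zero-mean complement) promotes this to surjectivity.

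The hardest part will be the $k=0$ radial-mode analysis at infinity: $\Delta$ on $\mathbb{R}^2$ admits the $\log r$-growing fundamental solution, and this slowly-growing mode is only barely suppressed by $\bar\rho$ (whose reciprocal $\bar\rho^{-1}\sim r\log^{1+\alpha/2}r$ merely dominates $\log r$). At finite scale $\lambda$ this near-kernel becomes a true eigenfunction with near-zero eigenvalue, so inverting $L_1$ against it costs exactly one factor of $\ln\lambda$, which is the origin of the stated estimate constant. A secondary technical difficulty is the inner-outer matching on $B_{2d}(p_1)\setminus B_d(p_1)$, which requires careful interpolation between the weighted norms of $X_\alpha$ inside and the flat $L^2$-based norm outside, exploiting the rapid decay of $f$ together with $W^{2,2}$-estimates in the annular transition region.
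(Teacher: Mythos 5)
Your proposal has a genuine gap, and it sits exactly at the decisive step. (Note also that the paper does not prove this statement at all: it imports it verbatim as Theorem B.1 of \cite{LY1}, so the only thing to assess is whether your sketch would reconstruct that proof.) The gap is the conclusion ``hence $\tilde\varphi_\infty\equiv 0$''. Your Fourier-mode analysis only excludes, in each mode $k$, the branch that \emph{grows} at infinity; it never shows that the one-dimensional space of solutions regular at the origin actually carries a nonzero component of that growing branch. What is really needed is the nondegeneracy of the limit operator $\Delta+f(|z|)$ linearized at the singular radial solution $V$ of \eqref{5decay}, i.e.\ that it has no bounded kernel compatible with the weights --- and that is a substantive ODE fact, not a consequence of indicial-root and decay bookkeeping. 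A concrete test that your argument proves too much: applied verbatim to the operator $L_{1,q}$ of Section \ref{sec4} (linearization at the nonsingular radial solution $w$), it would yield an unrestricted isomorphism there as well; but that limit problem \emph{does} have bounded kernel elements $\partial_{z_j}w\sim c\,z_j/|z|^{2}$, which are regular at the origin, decay like $|z|^{-1}$, and lie in all of your weighted spaces --- which is precisely why the paper must work with the projected operator $Q_qL_{1,q}$ and a finite-dimensional reduction in Theorem \ref{theorema}. Nondegeneracy holds in the vortex-point case only because the weight $|x|^{2}$ breaks translation invariance, and establishing it requires real input (identification of the unique mode-$0$ regular solution with the parameter derivative of the radial family together with strict monotonicity of the flux $\beta(s)$, and Wronskian/comparison arguments for the modes $|k|\ge 1$, as in \cite{CFL,LY1}); none of this is supplied by the norm constraints alone. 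Note in particular that for $k=0$ a solution tending to a nonzero constant at infinity is perfectly compatible with both the $L^\infty$ bound and the $\bar\rho$-weight, so even that branch is not excluded by your argument.

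Two further points would also have to be repaired before the contradiction scheme closes. First, with the normalization $\|\varphi_n\|_{L^\infty(\Omega)}+\|\varphi_n\|_{X_\alpha}=1$ the norm may concentrate in the neck region $1\ll\lambda_n|y-p_1|\ll d\lambda_n$, which is invisible to both the inner rescaled limit and the outer limit; ruling this out requires a Green's-representation or barrier argument in the annulus, and this is exactly where the factor $\ln\lambda$ is produced in \cite{LY1} (the logarithmically growing near-kernel $\partial_sV$), rather than by the heuristic ``inverting against a small eigenvalue costs $\ln\lambda$''. Second, the Fredholm step needs more care than stated, since $\Delta$ on the torus is not invertible (constants lie in its kernel and the range has a mean-zero constraint); this is repairable, but as written the surjectivity claim does not follow from ``compact perturbation of $\Delta$''. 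In short, the outline is reasonable, but the kernel classification and the quantitative $\ln\lambda$ estimate --- the actual mathematical content of Theorem B.1 in \cite{LY1} --- are missing.
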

Next, let us consider the corresponding nonlinear problem. We define an operator $\Psi$ by
\[
\Psi(\varphi,S) = \Big(L_{1}^{-1}(h_{1,\lambda,\mu}(\varphi,\hat{S})), \hat{S}\Big),
\]
where $\hat{S}=L^{-1}_2(h_{2,\lambda,\mu}(\varphi,S))$, and a subset $C_{\lambda,\mu}$ of $X_{\alpha}\times W^{2,2}(\Omega)$ by
\begin{equation*}\begin{aligned}
C_{\lambda,\mu}=\left\{(\varphi,S)\in X_{\alpha}\times W^{2,2}(\Omega)\ \ \Big| \ \   \|(\varphi,S)\|_*\leq (\ln\lambda)^{-3}  \right\}.
\end{aligned}\end{equation*}
where
\begin{equation*}
\|(\varphi,S)\|_*:= \|\varphi\|_{L^\infty(\Omega)}+\|\varphi\|_{X_{\alpha}}+\frac{\lambda}{\mu^2(\ln\lambda)^3}(\mu^2\|S\|_{L^2(\Omega)}+\mu\|S\|_{L^\infty(\Omega)}+\|S\|_{W^{2,2}(\Omega)}).
\end{equation*}
We note that if $(\varphi,S)\in C_{\lambda,\mu}$, then \begin{equation*} \|\varphi\|_{L^\infty(\Omega)}+\|\varphi\|_{X_{\alpha}}\leq  {(\ln\lambda)^{-3}}, \ \textrm{and}\
 \mu^2\|S\|_{L^2(\Omega)}+\mu\|S\|_{L^\infty(\Omega)}+\|S\|_{W^{2,2}(\Omega)} \leq \frac{\mu^2}{\lambda}.\end{equation*}
The following estimation would be important for the contraction argument.
\begin{lemma}\label{5deltaeUqu}
There exists a constant $C$ such that
\[
\|\Delta \{e^{U_{\lambda}+u_0+\varphi}\}\|_{L^2(\Omega)}\leq C\lambda^3(1+\|\varphi\|_{L^\infty(\Omega)}+\|\varphi\|_{X_{\alpha}}).
\]
for any $(\varphi,S)\in C_{\lambda,\mu}$.
\end{lemma}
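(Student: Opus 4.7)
The plan is to follow closely the strategy of Lemma \ref{deltaeUqu}, making three adjustments dictated by the new ansatz: (a) the building block $V$ of (\ref{5decay}) carries a singular weight $|x|^{2}e^{V}$ rather than $e^{w}$; (b) the constant $C_\lambda=2\ln\lambda+O(1)$ inserts an overall factor $e^{C_\lambda}=O(\lambda^{2})$ into $e^{U_\lambda}$, accounting for the polynomial-in-$\lambda$ growth of the right-hand side; (c) the unknown $\varphi$ now appears exponentially rather than linearly, but since $(\varphi,S)\in C_{\lambda,\mu}$ forces $\|\varphi\|_{L^{\infty}(\Omega)}\le(\ln\lambda)^{-3}$, the factor $e^{\varphi}=1+O((\ln\lambda)^{-3})$ is uniformly bounded. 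Writing $A:=U_\lambda+u_0+\varphi$ and using $\Delta e^{A}=e^{A}(\Delta A+|\nabla A|^{2})$, I split $\Omega=B_{d}(p_{1})\cup(\Omega\setminus B_{d}(p_{1}))$ and estimate each region separately.

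In the inner region $B_{d}(p_{1})$, the decomposition $u_{0}(y)=2m_{1}\ln|y-p_{1}|+\hat{u}_{0}(y)$ with $\hat{u}_{0}\in C^{2}(B_{d}(p_{1}))$ combines with (\ref{5Uq}) to give
\[
U_{\lambda}+u_{0}=V_{\lambda}+2m_{1}\ln|y-p_{1}|+C_{\lambda}+\Phi(y),\qquad \Phi\in C^{2}(B_{d}(p_{1})),
\]
so that $e^{U_{\lambda}+u_{0}}=\lambda^{2}|y-p_{1}|^{2m_{1}}e^{V_{\lambda}}e^{\Phi}$. After the scaling $z=\lambda(y-p_{1})$ this becomes $\lambda^{2-2m_{1}}|z|^{2m_{1}}e^{V(|z|)}e^{\Phi}$, for which (\ref{5decay}) supplies the decay $|z|^{2m_{1}}e^{V(|z|)}\le C(1+|z|)^{2m_{1}-2\mathfrak{M}}$ and $|V'(|z|)|\le C(1+|z|)^{-1}$. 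Using the equation (\ref{5decay}) itself for $\Delta V_\lambda$, together with $|\nabla u_{0}|\le C/|y-p_{1}|$, I expand every term of $e^{A}(\Delta A+|\nabla A|^{2})$ and estimate its $L^{2}$ norm piece-by-piece along the lines of (\ref{BdL2})--(\ref{gradw21}): the $\nabla\varphi$-contributions are handled by the finite covering argument, while the $\Delta\tilde{\varphi}$- and $\tilde{\varphi}$-contributions enter through the weighted norms $\|\rho\Delta\tilde{\varphi}\|_{L^{2}}$ and $\|\bar{\rho}\tilde{\varphi}\|_{L^{2}}$ that define $\|\varphi\|_{X_{\alpha}}$. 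The cross pieces $\nabla U_{\lambda}\cdot\nabla u_{0}$ and $\nabla(U_{\lambda}+u_{0})\cdot\nabla\varphi$ are absorbed analogously by Cauchy--Schwarz after scaling.

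In the outer region $\Omega\setminus B_{d}(p_{1})$, the bubble is exponentially small: (\ref{5decay}) gives $V(d\lambda)=-2\mathfrak{M}\ln(d\lambda)+O(1)$, so $e^{V_{\lambda}}=O(\lambda^{-2\mathfrak{M}})$ and $e^{U_{\lambda}+u_{0}+\varphi}=O(\lambda^{2-2\mathfrak{M}})$ uniformly, while $\Delta U_{\lambda}$, $\nabla U_{\lambda}$, $\Delta e^{u_{0}}$, $\nabla e^{u_{0}}$ remain uniformly bounded by the arguments of (\ref{Bd1})--(\ref{Bd2}) (recall $p_{1}\ne p_{j}$ for $j\ge 2$, so each $e^{u_{0}}$-factor is $C^{2}$ near $p_{j}$). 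The transitional shell $B_{3d\lambda/2}(0)\setminus B_{d\lambda}(0)$ is treated exactly as in the last display of the proof of Lemma \ref{deltaeUqu}, the finite covering yielding a contribution $O(\lambda^{2-2\mathfrak{M}+1+\alpha/2})\|\varphi\|_{X_{\alpha}}$, which is negligible since $\mathfrak{M}>4$. Putting the two regions together yields the announced bound.

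The main obstacle I anticipate is the careful bookkeeping of the extra $\lambda^{2}$ factor: it must be tracked through every term of $\Delta A$ and $|\nabla A|^{2}$, and compensated by the decay of $|z|^{2m_{1}}e^{V(|z|)}$ when integrating in $B_{d\lambda}(0)$. The hypothesis $\mathfrak{M}>4$ provides a comfortable integrability margin for all the weighted integrals $\int(1+|z|)^{a}|z|^{2km_{1}}e^{kV(|z|)}\,dz$ ($k=1,2,4$) that arise. A secondary difficulty is the appearance of $e^{\varphi}$ (rather than $1+\varphi$) in every piece, which forces me to invoke the $L^{\infty}$ smallness $\|\varphi\|_{L^{\infty}}\le(\ln\lambda)^{-3}$ built into $C_{\lambda,\mu}$ each time such a factor is encountered.
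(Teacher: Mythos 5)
Your proposal takes essentially the same approach as the paper, which simply records that $e^{U_\lambda}=O(\lambda^2)$ (versus $e^{U_{\lambda,q}}=O(1)$ in Section \ref{sec4}) and then refers back to the region-by-region scaling and finite-covering estimates of Lemma \ref{deltaeUqu}. The one piece worth singling out is the genuinely quadratic term $e^{A}|\nabla\varphi|^{2}$ that arises from $\Delta e^{A}=e^{A}(\Delta A+|\nabla A|^{2})$ with $A=U_{\lambda}+u_{0}+\varphi$ and has no counterpart in Lemma \ref{deltaeUqu} (where the ansatz is linear in $\varphi$); your finite-covering $W^{2,2}$ argument does handle it, but it produces an $\|\varphi\|_{X_{\alpha}}^{2}$ contribution, so here the smallness of $\|\varphi\|_{X_{\alpha}}$ itself (not merely of $\|\varphi\|_{L^{\infty}}$) built into $C_{\lambda,\mu}$ must be invoked to absorb the square into the stated linear bound.
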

\begin{proof} Although we have
$e^{U_{\lambda,q}}=O(1)$ from \eqref{Uq} in the section \ref{sec4},  we note that $e^{U_\lambda}=O(\lambda^2)$ from \eqref{5Uq}. Except this observation, we can follow the arguments in the proof of Lemma \ref{deltaeUqu}, and obtain Lemma \ref{5deltaeUqu}. We skip the detail.
\end{proof}

\noindent
{\bf Completion of the proof of Theorem \ref{5thm}}.
First of all, we claim that there exists a fixed point $(\bar{\varphi},\bar{S})\in C_{\lambda,\mu}$ of the operator $\Psi$.

As in the proof of Proposition \ref{contractionmap},  Lemma \ref{5deltaeUqu} and \eqref{5Uq} imply that there is a constant $C>0$ satisfying
\begin{equation}\begin{aligned}\label{5estimateg2}
\|h_{2,\lambda,\mu}\|_{L^2(\Omega)}
\leq  C \left\{\mu\lambda^4(1+\|\varphi\|_{L^\infty(\Omega)})+\lambda^3\|\varphi\|_{X_{\alpha}}+\lambda^3\|S\|_{L^2(\Omega)}(\mu+\lambda\|S\|_{L^\infty(\Omega)})\right\},
\end{aligned}\end{equation}
and\begin{equation}\label{5estimateg1}
\begin{aligned}
\|h_{1,\lambda,\mu}(\varphi,\hat{S})\|_{Y_{\alpha}}
&\leq C\left\{\frac{1}{\lambda}+\frac{\lambda^3}{\mu}(1+\|\varphi\|_{L^\infty(\Omega)})
+\|\varphi\|_{L^\infty(\Omega)}^2+\lambda\|\hat{S}\|_{L^2(\Omega)} \left(1+\frac{\lambda^2}{\mu^2}\|\hat{S}\|_{L^\infty(\Omega)}\right)\right\}.
\end{aligned}\end{equation}
We remark that the difference between (\ref{estimateg2})-\eqref{estimateg1}  and (\ref{5estimateg2})-\eqref{5estimateg1} comes from the setting of solution in \eqref{error} and \eqref{5error} in addition to Lemma \ref{deltaeUqu}  and   Lemma \ref{5deltaeUqu}.
From Theorem \ref{5theorema1} and \ref{theoremb}, the inequalities (\ref{5estimateg2}) -(\ref{5estimateg1})  and the assumption $1\ll(\ln\lambda)^5\lambda^5\ll\mu$ yield that $\Psi(\varphi,S)\in C_{\lambda,\mu}$ for any $(\varphi,S)\in C_{\lambda,\mu}$.

Similarly,  we can also get that if   $1\ll(\ln\lambda)^5\lambda^5\ll\mu$ and $(\varphi_1,S_1),  (\varphi_2,S_2) \in C_{\lambda,\mu}$, then
\begin{equation}\begin{aligned}\label{5diffg2}
&\|h_{2,\lambda,\mu}(\varphi_1,S_1)- h_{2,\lambda,\mu}(\varphi_2,S_2)\|_{L^2(\Omega)}
\\&=O(\mu\|\varphi_1-\varphi_2\|_{L^\infty(\Omega)}+\lambda^3\|\varphi_1-\varphi_2\|_{X_{\alpha}} +\lambda\mu\|S_1-S_2\|_{L^2(\Omega)}),
\end{aligned}\end{equation}and
\begin{equation}\label{5diffg1}
\begin{aligned}
&\|h_{1,\lambda,\mu}(\varphi_1,\hat{S}_1)-h_{1,\lambda,\mu}(\varphi_2,\hat{S}_2)\|_{Y_{\alpha}}
\\
&=O((\ln\lambda)^{-3}\|\varphi_1-\varphi_2\|_{L^\infty(\Omega)}
+\lambda\|\hat{S}_1-\hat{S}_2\|_{L^2(\Omega)}).
\end{aligned}\end{equation}
The estimations \eqref{5diffg2}-\eqref{5diffg1}, Theorem \ref{5theorema1}, and Theorem \ref{theoremb} imply that  if  $(\varphi_1,S_1), \ (\varphi_2,S_2) \in  C_{\lambda,\mu}$, then there exists  a constant $0<\tau<1$ satisfying \begin{equation}\begin{aligned}\label{5claims2}
\|\Psi(\varphi_1,S_1)- \Psi(\varphi_2,S_2)\|_*<\tau\|(\varphi_1,S_1)- (\varphi_2,S_2)\|_*.
\end{aligned}\end{equation}

In view of  the  contraction mapping
theorem, we get that if  $1\ll(\ln\lambda)^5\lambda^5\ll\mu$,
there exists $(\bar{\varphi}, \bar{S})\in C_{\lambda,\mu}$   satisfying
\begin{equation}\label{5final}
\left\{\begin{array}{l}
\Delta \bar{\varphi}+\lambda^2 f(\lambda|y-p_1|)\cdot1_{B_{2d}(p_1)}\bar{\varphi}=h_{1,\lambda,\mu}(\bar{\varphi},\bar{S}),\\
\Delta \bar{S}-\mu^2\bar{S}=h_{2,\lambda,\mu}(\bar{\varphi},\bar{S}).
\end{array}
\right.
\end{equation} We note that  $\left(u_{\lambda,\mu}, N_{\lambda,\mu}\right):=\left(U_{\lambda}+\bar{\varphi}-\frac{\lambda}{\mu}\left( e^{U_{\lambda}+u_0+\bar{\varphi}}+\bar{S}\right), \lambda\left( e^{U_{\lambda}+u_0+\bar{\varphi}}+\bar{S}\right)\right)$ satisfies the system \eqref{eq2}, and thus     complete the proof of Theorem \ref{5thm}.
  \qed

\bigskip

\noindent{\bf Acknowledgement}\\
   W. Ao was supported by NSFC (No. 11801421 and No. 11631011). O. Kwon was supported by  Young Researcher Program through the National Research
Foundation of Korea (NRF) (No. NRF-2016R1C1B2014942).
Y. Lee was supported by the National Research Foundation of Korea(NRF) grant funded by the Korea government(MSIT) (No. NRF-2018R1C1B6003403).

\end{document}